\documentclass[11pt,leqno]{amsart}
\usepackage{amsmath,amssymb,amsfonts,mathrsfs, centernot}
\usepackage{amsthm}
\usepackage{enumerate}

\newtheorem{introthm}{Theorem}

\newtheorem{introlem}[introthm]{Lemma}

\newtheorem{theorem}{Theorem}[section]
\newtheorem{corollary}[theorem]{Corollary}
\newtheorem{definition}[theorem]{Definition}
\newtheorem{lemma}[theorem]{Lemma}

\theoremstyle{definition}

\newtheorem{example}[theorem]{Example}
\newtheorem{remark}[theorem]{Remark}

\newcommand{\N}{\mathbb{N}}

\newcommand{\R}{\mathbb{R}}

\newcommand{\wass}{\mathop{\mathscr{W}_2}\nolimits}

\pagestyle{plain}

\sloppy

\begin{document}
\pagebreak


\title{Isometric rigidity of $L^2$-spaces with manifold targets}
 
\author{David Lenze}

\address
  {Karlsruher Institut f\"ur Technologie\\ Fakult\"at f\"ur Mathematik \\
Englerstr. 2 \\
76131 Karlsruhe,
Germany}
\email{david.lenze@kit.edu}

\begin{abstract}
We describe the isometry group of $L^2(\Omega, M)$ for Riemannian manifolds $M$ of dimension at least two with irreducible universal cover. We establish a rigidity result for the isometries of these spaces: any isometry arises from an automorphism of $\Omega$ and a family of isometries of $M$, distinguishing these spaces from the classical $L^2(\Omega)=L^2(\Omega,\R)$. Additionally, we prove that these spaces lack irreducible factors and that two such spaces are isometric if and only if the underlying manifolds are.
\end{abstract}
\maketitle

\renewcommand{\theequation}{\arabic{section}.\arabic{equation}}
\pagenumbering{arabic}

\section{Introduction}
\subsection{Main results}
	
The space $L^2(\Omega,X)$ consists of measurable, essentially separably valued functions $f:\Omega \to X$ from a finite measure space $(\Omega,\mu)$ to a metric space $(X,d)$ such that 
\[
\int_\Omega d^2(f(\omega),x)\,d\mu(\omega)<\infty,
\]

for some (and hence any) $x\in X$. We naturally equip $L^2(\Omega,X)$ with the metric 
$
d_{L^2}(f,g) = \left(\int_\Omega d^2(f(\omega),g(\omega))\, d\mu(\omega)\right)^{1/2}.
$

These spaces have found applications in various directions, including recently in the study of $L^2$-geometries on spaces of Riemannian metrics and related problems, as seen in Cavallucci \cite{MR4579388}, Cavallucci and Su \cite{MR4624071}, and B\"ohm, Buttsworth, and Clarke \cite{MR4780984}.

Let $(\Omega_1, \mu_1)$ and $(\Omega_2, \mu_2)$ be measure spaces. A bijection $\varphi: \Omega_1 \to \Omega_2$ is a \textit{strict isomorphism} if both $\varphi$ and $\varphi^{-1}$ are measure-preserving. An \textit{isomorphism} is a map that restricts to a strict isomorphism on full-measure subspaces. An isomorphism from a measure space to itself is an \textit{automorphism}, and the group of automorphisms of $(\Omega, \mu)$ is denoted $\operatorname{Aut}(\Omega)$. Furthermore $L^2(\Omega, \operatorname{Isom}(X))$ is the group, under pointwise composition, of functions $\rho: \Omega \to \operatorname{Isom}(X)$ such that for all $x \in X$, $\omega \mapsto \rho(\omega)(x)$ lies in $L^2(\Omega, X)$.
Both $\operatorname{Aut}(\Omega)$ and $L^2(\Omega, \operatorname{Isom}(X))$ are subgroups of the isometry group of $L^2(\Omega, X)$:
\begin{enumerate}
    \item For $\varphi \in \text{Aut}(\Omega)$, the map $f \mapsto f \circ \varphi$ is an isometry of $L^2(\Omega,X)$.
    \item For $\rho \in L^2(\Omega, \operatorname{Isom}(X))$, the map $f \mapsto \gamma_\rho(f)$, where $\gamma_\rho(f)(\omega) := \rho(\omega)(f(\omega))$ for all $\omega \in \Omega$, is an isometry of $L^2(\Omega,X)$.
\end{enumerate}

In this paper, we show that for a complete Riemannian manifold $M$ of dimension at least two with irreducible universal cover, the isometries of $L^2(\Omega,M)$ are rigid in the sense that every isometry is a composition of an isometry of type (1) with one of type (2). Indeed we establish:
\begin{introthm} \label{semi-main} Let $(\Omega,\mu)$ be a standard probability space and $M$ be a complete Riemannian manifold with irreducible universal covering and dim$(M)\geq 2$. Then $ \text{Isom}(L^2(\Omega,M))= L^2(\Omega,\text{Isom}(M)) \rtimes \text{Aut}(\Omega).$
\end{introthm}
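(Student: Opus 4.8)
The plan is to establish the two inclusions separately, the easy one being that the right-hand side sits inside $\operatorname{Isom}(L^2(\Omega,M))$ as a semidirect product and the hard one being surjectivity, i.e.\ that every isometry has the asserted form. For the group-theoretic skeleton I would first verify by direct computation that, writing $\Psi_\varphi(f)=f\circ\varphi$ for type (1) and $\gamma_\rho$ for type (2), one has $\Psi_\varphi\circ\gamma_\rho\circ\Psi_\varphi^{-1}=\gamma_{\rho\circ\varphi}$, where $(\rho\circ\varphi)(\omega)=\rho(\varphi\omega)$; hence $L^2(\Omega,\operatorname{Isom}(M))$ is normalised by $\operatorname{Aut}(\Omega)$. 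The two subgroups intersect trivially: evaluating an identity $\gamma_\rho=\Psi_\varphi$ on constant functions forces $\rho(\omega)(x)=x$ for every $x\in M$ and a.e.\ $\omega$, so $\rho\equiv\operatorname{id}$ and then $\varphi=\operatorname{id}$ in $\operatorname{Aut}(\Omega)$. This yields the semidirect-product structure, and all remaining work is to show that an arbitrary $\Phi\in\operatorname{Isom}(L^2(\Omega,M))$ lies in this product.

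Second, I would reduce to the case that $(\Omega,\mu)$ is non-atomic by exploiting metric product splittings. Because $M$ is complete Riemannian, $L^2(\Omega,M)$ is a complete geodesic space, so the de Rham decomposition theorem for metric spaces applies: it splits uniquely, up to permutation of isomorphic factors, into a Euclidean factor and indecomposable non-Euclidean factors, and every isometry permutes the factors. Each atom $\omega_0$ of mass $c=\mu(\{\omega_0\})$ contributes a factor $(M,\sqrt{c}\,d_M)$, which is indecomposable and non-Euclidean precisely because $\widetilde M$ is irreducible and $\dim M\ge 2$. A crucial preliminary is to prove that the non-atomic part of $L^2(\Omega,M)$ carries no such splitting, so that it is absorbed into a single indecomposable factor with no Euclidean part. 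Uniqueness of the decomposition then forces $\Phi$ to match atoms of equal mass---supplying part of the automorphism $\varphi$ together with a genuine element of $\operatorname{Isom}(M)$ on each matched atom, by rigidity of the $M$-factor---and to preserve the non-atomic part. Thus it suffices to treat $\Phi$ on a non-atomic base.

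Third comes the heart of the argument. At a regular function $f$ (one avoiding measure-theoretically negligible cut-locus phenomena) the tangent cone of $L^2(\Omega,M)$ is the Hilbert space $T_fL^2(\Omega,M)=L^2(\Omega,f^\ast TM)$ with inner product $\langle V,W\rangle_f=\int_\Omega\langle V(\omega),W(\omega)\rangle\,d\mu$, and a Myers--Steenrod-type regularity statement shows that $\Phi$ induces a linear isometry $d\Phi_f\colon L^2(\Omega,f^\ast TM)\to L^2(\Omega,(\Phi f)^\ast TM)$ intertwining all infinitesimal metric invariants, in particular the curvature tensors, which here are the direct integrals $\omega\mapsto R^M_{f(\omega)}$ and $\omega\mapsto R^M_{(\Phi f)(\omega)}$. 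The decisive step---and the main obstacle---is to show that $d\Phi_f$ is \emph{decomposable}: that it respects the measurable fibre structure, i.e.\ the $L^\infty(\Omega)$-module structure of the tangent cone, and therefore arises from a nonsingular transformation $\varphi$ of $\Omega$ together with measurable fibrewise linear isometries $u_\omega\colon T_{f(\omega)}M\to T_{(\Phi f)(\varphi\omega)}M$. This is exactly where $\dim M\ge 2$ and irreducibility are indispensable: for $M=\R$ the cone is a featureless Hilbert space admitting the entire orthogonal group as extra isometries, whereas for irreducible $\widetilde M$ of dimension at least two the holonomy acts irreducibly and the curvature is a direct integral of irreducible, mutually unmixable blocks; a decomposable-operator argument in the spirit of disintegration theory then shows that any isometry intertwining these two direct integrals must act fibrewise and cannot transport mass between distinct fibres.

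Finally I would integrate the infinitesimal data. Each fibrewise linear isometry $u_\omega$ preserves the curvature tensor together with its covariant derivatives, so by the classical rigidity of Riemannian isometries it is the differential of a unique isometry $\rho(\omega)\in\operatorname{Isom}(M)$; completeness of $M$ upgrades the local isometry to a global one, and measurability of $\omega\mapsto\rho(\omega)$ together with the $L^2$-condition is inherited from $d\Phi_f$. A connectedness argument across $L^2(\Omega,M)$ shows that the transformation $\varphi$ and the field $\rho$ do not depend on the base point $f$, so that $\Phi=\gamma_\rho\circ\Psi_\varphi$ globally. Combined with the atomic reduction this gives the reverse inclusion and hence the asserted identity $\operatorname{Isom}(L^2(\Omega,M))=L^2(\Omega,\operatorname{Isom}(M))\rtimes\operatorname{Aut}(\Omega)$. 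I expect the decomposability step of the third paragraph to be the principal difficulty, with the Myers--Steenrod regularity of metric isometries on these infinite-dimensional $L^2$-manifolds a close second.
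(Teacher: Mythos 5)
Your first paragraph (conjugation formula $\Psi_\varphi\gamma_\rho\Psi_\varphi^{-1}=\gamma_{\rho\circ\varphi}$, trivial intersection via constant functions) matches the paper's final argument and is fine; the theorem does reduce to showing every isometry is of the product form. But your route to that surjectivity statement has two genuine gaps. First, the reduction in your second paragraph is not available: the de Rham decomposition theorem for metric spaces (Foertsch--Lytchak) requires \emph{finite affine rank}, which $L^2(\Omega,M)$ over a non-atomic base badly fails, and your ``crucial preliminary'' --- that the non-atomic part admits no splitting --- is provably false. The paper's Theorem~\ref{L2factors1} shows the opposite: for atomless $\Omega$, \emph{every} measurable $A\subset\Omega$ of positive measure induces a splitting $L^2(A,M)\times L^2(A^c,M)$, and each factor is a rescaled copy of the whole space, so there is no decomposition into indecomposable factors and no uniqueness statement to invoke for matching atoms. (The conclusion you want --- that the isometry respects atoms --- is true, but the paper obtains it from a Boolean-algebra argument: the classification of splittings yields a $\sigma$-additive bijection $\Psi$ of the measure algebra, which is shown directly to permute atoms.)

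Second, the ``decisive step'' of your third paragraph fails as argued. At a constant function $f\equiv p$ the tangent Hilbert space is $L^2(\Omega)\otimes T_pM$ and your direct-integral curvature operator is $\operatorname{id}\otimes R^M_p$, with all fibre blocks identical; any unitary of the form $U\otimes\operatorname{id}$, with $U$ an arbitrary unitary of $L^2(\Omega)$ not induced by any point transformation, intertwines it. So intertwining the curvature (and likewise its covariant derivatives, which are equally of tensor-product form at such $f$) cannot force decomposability over $L^\infty(\Omega)$ --- this is exactly the flexibility of Remark~\ref{R} resurfacing, since irreducibility of holonomy controls the fibre $T_pM$ but says nothing about mixing across $\Omega$ when the fibres are isomorphic with uniform multiplicity. (The Myers--Steenrod regularity you presuppose on these infinite-dimensional spaces is also unestablished, as you note.) What actually rules out the mixing is global, not infinitesimal, structure: the paper proves that angles exist in $L^2(\Omega,X)$ (Lemma~\ref{angles}), classifies all affine maps out of $L^2(\Omega,M)$ into spaces with angles as $d_\eta$-projections with $\eta\in L^\infty(\Omega)$ (Theorem~\ref{affine_classical}, built on the holonomy argument of Theorem~\ref{affineprod} applied to simple functions), deduces that splittings correspond to characteristic functions $\chi_A$ (Corollary~\ref{L2factors}), and then converts the resulting measure-algebra automorphism into a genuine point map $\varphi\in\operatorname{Aut}(\Omega)$ via Rokhlin's structure theorem and Banach's description of isometries of $L^1$; the dilation factors are killed because a non-Euclidean $M$ admits no surjective non-isometric dilation. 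You would need to replace your decomposability claim by an argument of this global type for the proposal to go through.
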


Assuming $\Omega$ to be a standard probability space is a natural choice, encompassing a wide range of interesting cases, such as the unit interval equipped with the Lebesgue measure or the discrete probability space $\Omega=\{1,...,n\}$ with equal weights. In the latter case, $(L^2(\Omega,M),d_{L^2})\cong (M^n,\tfrac{1}{\sqrt{n}}\,d_{M^n})$. Therefore, we recover the well-known fact that $\text{Isom}(M^n)= \text{Isom}(M)^n \rtimes S_n$, which also follows directly from the de Rham decomposition theorem. A brief overview of standard probability spaces will be given in the preliminaries.

This isometric rigidity contrasts sharply with the flexibility of isometries in the classical $L^2(\Omega)=L^2(\Omega,\R)$. See Remark~\ref{R} for details.

We further observe a remarkable property: for atomless probability spaces $(\Omega,\mu)$, any factor in a direct product decomposition of $L^2(\Omega,M)$ is isometric to a rescaled version of the original space. In other words:
\begin{introthm} \label{L2factors1}
	Let $\Omega$ be a standard probability space without atoms and $M$ be a complete Riemannian manifold with irreducible universal cover and $\dim(M)\geq 2$. For any non-trivial direct product decomposition $L^2(\Omega,M)= Y\times \overline{Y}$, both factors $Y$ and $\overline{Y}$ are isometric, up to rescaling, to the original space $L^2(\Omega,M)$.
\end{introthm}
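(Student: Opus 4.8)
The plan is to show that every nontrivial metric product decomposition of $X := L^2(\Omega,M)$ is, up to isometry, induced by a measurable partition of the base, after which the atomless hypothesis does the rest. First I would record the \emph{easy direction} and a reduction. Since $(\Omega,\mu)$ is atomless and standard it is isomorphic to $([0,1],\mathrm{Leb})$, and for any measurable $\Omega'\subseteq\Omega$ with $0<\mu(\Omega')<1$ the map $f\mapsto(f|_{\Omega'},f|_{\Omega\setminus\Omega'})$ is an isometry
\[
X\;\cong\;L^2(\Omega',M)\times L^2(\Omega\setminus\Omega',M),
\]
because $d_{L^2}^2$ splits additively over the partition. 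Moreover $(\Omega',\mu|_{\Omega'}/\mu(\Omega'))$ is again an atomless standard probability space, hence isomorphic to $\Omega$, and passing to this normalized measure rescales the metric of $L^2(\Omega',M)$ by the factor $\mu(\Omega')^{1/2}$; thus $L^2(\Omega',M)$ is isometric to $\mu(\Omega')^{1/2}X$. It therefore suffices to prove that every nontrivial decomposition $X=Y\times\overline{Y}$ arises from such a partition.

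Next I would pass to the infinitesimal picture. View $X$ as a Hilbert--Riemannian manifold: its tangent space at $f$ is $T_fX=L^2(\Omega,f^*TM)$, geodesics are pointwise geodesics, and both the Levi-Civita connection and parallel transport act fibrewise, $(P_{(f_t)}v)(\omega)=P_{(f_t(\omega))}v(\omega)$. A metric product decomposition $X=Y\times\overline{Y}$ induces at each $f$ a splitting $T_fX=V_f\oplus W_f$ into the tangent spaces of the two factor foliations, orthogonal by the Pythagorean product metric; since the leaves are totally geodesic, $V$ and $W$ are complementary parallel distributions, which is the converse direction of de Rham's theorem in the Hilbert-manifold setting. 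Conversely such a pair of parallel distributions integrates back to the product decomposition, which is therefore determined by $V$, so the task reduces to classifying the parallel sub-distributions.

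The geometric core is then the holonomy computation. Because parallel transport is fibrewise, the holonomy group at $f$ acts on $T_fX=\int_\Omega^\oplus T_{f(\omega)}M\,d\mu$ by decomposable operators $v\mapsto(h_\omega v(\omega))$ with $h_\omega\in\operatorname{Hol}(M,f(\omega))$, and, using measurable families of loops supported on prescribed measurable subsets, independently over $\omega$. As $\widetilde{M}$ is irreducible, de Rham's theorem makes each $\operatorname{Hol}(M,f(\omega))$ act $\R$-irreducibly on $T_{f(\omega)}M$, so $0$ and $T_{f(\omega)}M$ are its only invariant subspaces. A parallel distribution $V_f$ is holonomy-invariant, hence its orthogonal projection commutes with the whole fibrewise action; by the theory of direct integrals, where the commutant of the diagonalizable algebra $L^\infty(\Omega)$ is exactly the decomposable operators, together with fibrewise irreducibility, this projection is decomposable with fibrewise values in $\{0,I\}$. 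Thus $V_f=L^2(\Omega',f^*TM)$ for a measurable set $\Omega'=\Omega'(f)$. Fibrewise parallel transport identifies $V_f$ with $V_g$ for all $f,g$, forcing $\Omega'(f)$ to be independent of $f$ on the connected space $X$; so $V$ is the tangent distribution of the factor $L^2(\Omega',M)$ and the decomposition is the partition one. Combined with the reduction and with $0<\mu(\Omega')<1$ from nontriviality, both factors are rescalings of $X$.

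I expect the principal obstacle to be the rigorous transfer in the second step: upgrading a purely metric product splitting to a smooth, parallel, orthogonal splitting of the infinite-dimensional tangent bundle. In finite dimensions this is classical de Rham theory, but here one must justify the Hilbert-manifold version and rule out product decompositions not adapted to the differentiable structure; adapting the metric de Rham decomposition theorem, with the tangent cones here being genuine Hilbert spaces, is the natural route. A secondary technical point is making the ``independent fibrewise holonomy'' precise, namely selecting measurable loop fields of controlled $L^2$-length that realize prescribed holonomy on a given measurable set, and then invoking the decomposable-operator theorem to conclude that the projection onto $V_f$ is diagonal.
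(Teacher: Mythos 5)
There is a genuine gap, and it sits exactly where you flag it: the upgrade from a purely metric product decomposition to a parallel orthogonal splitting of a tangent bundle. Your argument treats $X=L^2(\Omega,M)$ as a Hilbert--Riemannian manifold, but it is not one: the natural candidate charts $v\mapsto \exp_{f(\omega)}v(\omega)$ would have domains of the form $\{v\in L^2(\Omega,f^*TM): |v(\omega)|<r \text{ a.e.}\}$, which are not open in the $L^2$-topology, and no smooth atlas, Levi-Civita connection, parallel distribution, or holonomy group of $X$ is available in the sense your second and third steps require. So ``the converse direction of de Rham's theorem in the Hilbert-manifold setting'' is not a theorem one can adapt here; and the metric-space substitute you point to, the Foertsch--Lytchak decomposition theorem, explicitly requires finite affine rank, which $L^2(\Omega,M)$ fails --- indeed the paper singles out this very space as demonstrating the necessity of that finiteness hypothesis (Theorem~\ref{L2factors1} itself shows decompositions are maximally non-unique, every factor being a rescaled copy of the whole space). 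A further structural obstruction, even granting some infinitesimal splitting: passing from a parallel distribution back to a global product requires simple connectedness in the classical de Rham theorem, and $L^2(\Omega,M)$ is in general not simply connected (nor is $M$ assumed to be). What you call the ``principal obstacle'' is therefore not a transfer of known technology but the entire content of the theorem, and the proposal supplies no argument for it.

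The paper's route avoids these objects altogether. It works with the projections $P^Y,P^{\overline Y}$, which are \emph{affine maps} of metric spaces; it shows angles exist in $L^2(\Omega,X)$ (Lemma~\ref{angles}) and hence in the factors $Y,\overline Y$ (Lemma~\ref{angles_product}); it classifies affine maps $M^n\to Y$ into spaces with angles by genuinely finite-dimensional holonomy arguments (Berger--Simons, Lytchak's theorem that $|\cdot|^f$ is a parallel family of seminorms, Schur's lemma) in Theorem~\ref{affineprod}; and it propagates this to $L^2(\Omega,M)$ through simple functions, finite additivity, and Radon--Nikodym (Theorem~\ref{affine_classical}), obtaining $d_Y(P^Y(f),P^Y(g))=d_\eta(f,g)$ with $\eta\in L^\infty(\Omega)$, after which surjectivity of the splitting forces $\eta=\chi_A$ (Corollary~\ref{L2factors}). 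Your fibrewise-holonomy-plus-commutant picture is a reasonable heuristic shadow of this --- the paper's holonomy input also occurs fibrewise, but at the level of $M^n$, where manifolds and holonomy genuinely exist --- and note that even there, classifying invariant \emph{seminorms} (not merely invariant projections) requires Berger--Simons in the non-transitive, higher-rank symmetric case, so Schur's lemma alone would not suffice. Your first step, deducing the theorem from the partition description via normalization of $(A,\mu|_A/\mu(A))$ and Rokhlin's theorem, is correct and coincides with the paper's final reduction.
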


Thus the space $L^2(\Omega, M)$ lacks irreducible factors. This contrasts with the generalized de Rham decomposition theorem for metric spaces due to Lytchak and Foertsch (cf. \cite{FL}), which requires finite affine rank for unique decomposition into a Euclidean and irreducible factors. Our example demonstrates the necessity of this finiteness condition.

Finally we are able to show the following striking fact:
\begin{introthm} \label{MN}
	Let $\Omega$ be a standard probability space, and $M$ and $N$ be complete Riemannian manifolds with irreducible universal covering. Then $L^2(\Omega,M)$ is isometric to $L^2(\Omega,N)$ if and only if $M$ is isometric to $N$. 
\end{introthm}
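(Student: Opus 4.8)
The forward implication is immediate: an isometry $\psi\colon M\to N$ induces an isometry $\Psi\colon L^2(\Omega,M)\to L^2(\Omega,N)$ by post-composition, $\Psi(f)=\psi\circ f$, since $\int_\Omega d_N^2(\psi(f(\omega)),\psi(g(\omega)))\,d\mu=\int_\Omega d_M^2(f(\omega),g(\omega))\,d\mu$. So the entire content lies in the reverse implication, and throughout I work in the substantive range $\dim\geq 2$ that is forced by irreducibility of the universal cover and inherited from the hypotheses of Theorems~\ref{semi-main} and~\ref{L2factors1}.

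For the reverse direction, suppose $F\colon L^2(\Omega,M)\to L^2(\Omega,N)$ is an isometry; the plan is to show that $F$ is necessarily \emph{fiberwise}, in the sense that there are an automorphism $\varphi\in\operatorname{Aut}(\Omega)$ and a measurable family $\rho\colon\Omega\to\operatorname{Isom}(M,N)$ of isometries $M\to N$ with
\[
F(f)(\omega)=\rho(\omega)\bigl(f(\varphi^{-1}(\omega))\bigr)\qquad\text{for a.e. }\omega .
\]
Once this is established the conclusion is immediate and requires no cancellation argument: the existence of $\rho(\omega)$ for even a single $\omega$ forces $\operatorname{Isom}(M,N)\neq\emptyset$, i.e.\ $M$ is isometric to $N$. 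Thus the classification is really a \emph{relative} form of Theorem~\ref{semi-main}, with a single target replaced by two; note also that since $\varphi$ is measure preserving it matches fibers of equal weight, so the rescalings on the two sides cancel and one obtains a genuine isometry $M\cong N$.

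To produce the fiberwise structure I would run the proof of Theorem~\ref{semi-main} in this two-target setting. That argument reconstructs, purely from the intrinsic metric geometry of $L^2(\Omega,\,\cdot\,)$, the measure space $\Omega$ together with the way the space splits over measurable subsets of $\Omega$; crucially, this reconstruction uses only the domain (respectively the codomain) and never the identification of the target. An isometry $F$ therefore carries the reconstructed structure of $L^2(\Omega,M)$ onto that of $L^2(\Omega,N)$, matching the common base $\Omega$ up to an automorphism $\varphi$ and inducing, over a.e.\ $\omega$, an isometry between the fiber of $L^2(\Omega,M)$ and that of $L^2(\Omega,N)$, these fibers being isometric rescalings of $M$ and $N$. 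The irreducibility of the universal cover together with $\dim\geq 2$ is exactly what rigidifies the fibers and rules out the flexibility seen for the $\R$-valued $L^2$-spaces of Remark~\ref{R}.

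The main obstacle is the atomless, and more generally the mixed, case. When $\Omega$ carries an atom of weight $w$, the space splits off the factor $(M,\sqrt{w}\,d_M)$, whose universal cover is irreducible; here one can invoke uniqueness of the de Rham decomposition (in the guise $\operatorname{Isom}(M^n)=\operatorname{Isom}(M)^n\rtimes S_n$ recalled in the introduction) to match the $M$-factor with an $N$-factor and conclude. But when $\Omega$ is atomless, Theorem~\ref{L2factors1} shows there are \emph{no} irreducible factors at all, so $M$ cannot be isolated by any product decomposition and de Rham-type arguments are entirely unavailable. In this regime the fiberwise structure of $F$ must be recovered by the finer, measure-theoretic reconstruction underlying Theorem~\ref{semi-main}, and making that reconstruction function simultaneously for two distinct targets is the crux of the proof.
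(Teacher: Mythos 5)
Your forward direction and your treatment of the case $\dim(M),\dim(N)\geq 2$ match the paper: the two-target ``relative form'' of Theorem~\ref{semi-main} you describe is exactly the paper's Theorem~\ref{isometriesM}, which is stated for a pair $M,N$ from the outset, and the paper deduces Theorem~\ref{MN} in that range precisely as you do, since the mere existence of $\rho(\omega)\in \operatorname{Isom}(M,N)$ for a single $\omega$ yields $M\cong N$. (Your worry about having to redo the reconstruction ``simultaneously for two targets'' is unfounded in the paper's architecture: Lemma~\ref{Psi} and Theorem~\ref{isometriesM} are already proved in the two-target setting, and Theorem~\ref{semi-main} is the corollary, not the other way around.)

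However, there is a genuine gap: your opening claim that $\dim\geq 2$ is ``forced by irreducibility of the universal cover'' is false, and with it you silently discard cases that Theorem~\ref{MN} covers. A one-dimensional manifold has (vacuously) irreducible universal cover $\R$, so $M$ or $N$ may be $S^1$ or $\R$ --- note that Theorems~\ref{semi-main} and \ref{isometriesM} carry an explicit hypothesis $\dim\geq 2$ while Theorem~\ref{MN} deliberately does not. These cases cannot be handled by the fiberwise rigidity you invoke, because rigidity genuinely fails for the target $\R$ (Remark~\ref{R} exhibits non-fiberwise isometries of $L^2(\Omega,\R)$), so a separate argument is unavoidable. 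The paper supplies three: $L^2(\Omega,S^1)\not\cong L^2(\Omega,\R)$ since one has finite and the other infinite diameter; $L^2(\Omega,\R)\not\cong L^2(\Omega,M)$ for $\dim(M)\geq 2$ since $M$ embeds isometrically in $L^2(\Omega,M)$, and an isometric copy of $M$ inside a Hilbert space forces $M\cong\mathbb{E}^n$, contradicting irreducibility; and $L^2(\Omega,S^1)\not\cong L^2(\Omega,M)$ via a local flatness argument --- using Theorem~\ref{thm: monodgeodesics}, the initial halves of geodesics issuing from $f\in L^2(\Omega,S^1)$ lie in a convex set isometric to a convex subset of a Hilbert space, so small geodesic triangles are flat; an embedded $M$ would then be flat, again contradicting irreducibility in dimension $\geq 2$. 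Your atoms-versus-atomless discussion, while a reasonable heuristic about where the difficulty of Theorem~\ref{isometriesM} lies, does not substitute for these missing one-dimensional comparisons.
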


This result, along with the isometric rigidity behind Theorem~\ref{semi-main}, parallels findings by Bertrand and Kloeckner for Wasserstein spaces (\cite{MR3509929}). There it was shown that, for a specific class of metric spaces, $\wass(X)$ and $\wass(Y)$ are isometric, if and only if $X$ and $Y$ are. This line of inquiry has recently been further explored in Che, Galaz-Garc\'ia, Kerin, and Santos-Rodr\'iguez \cite{che}.

For metric spaces $X$ and $Y$ we have a canonical isometry between $L^2(\Omega,X\times Y)$ and $L^2(\Omega,X)\times L^2(\Omega,Y)$. Moreover, for atomless $\Omega$, there exists an abstract one between $L^2(\Omega,X^n)$ and $L^2(\Omega,\sqrt{n}X)$. These facts illustrate that some irreducibility condition is necessary for the above results. For more details on this, we refer to section~\ref{nes}. 

We note that without the irreducibility assumption, we can still provide weaker algebraic characterizations of the isometry group. However, in contrast to Theorem~\ref{semi-main}, such characterizations do not explicitly reveal the structure of the isometries. We will not delve deeper into this in the present paper but as an illustration of such a case, we refer to Remark~\ref{rem} for a characterization of the isometry group of $L^2(\Omega, M)$ for simply connected $M$ and atomless $\Omega$.

\subsection{General strategy}

Let $X$ be a metric space and $\gamma_1:[0,a_1]\to X$, $\gamma_2:[0,a_2]\to X$ geodesic segments with $\gamma_1(0)=\gamma_2(0)=:x$. Recall that $\angle(\gamma_1,\gamma_2):=\limsup_{t,t^\prime \to 0}\overline{\angle}_{x}(\gamma_1(t),\gamma_2(t^\prime))$ denotes the \textit{Alexandrov angle} between $\gamma_1$ and $\gamma_2$, where $\overline{\angle}_{x}(\gamma_1(t),\gamma_2(t^\prime))$ is the Euclidean comparison angle at $x$. An Alexandrov angle \textit{exists in the strict sense} if the limit $\lim_{t,t^\prime \to 0}\overline{\angle}_{x}(\gamma_1(t),\gamma_2(t^\prime))$ exists. Let $X$ be a geodesic metric space. We say that \textit{angles exists in $X$} or \textit{X has angles} if the Alexandrov angle between any pair of geodesic segments issuing from the same point exists in the strict sense. This notion will play an important role in our reasoning and we will establish the following result of independent interest.

\begin{introlem}\label{angles}
	Let $X$ be a geodesic metric space in which angles exist and let $(\Omega,\mu)$ be a finite measure space. Then angles also exist in $L^2(\Omega,X)$.
\end{introlem}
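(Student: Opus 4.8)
The plan is to reduce the existence of angles in $L^2(\Omega,X)$ to the existence of angles in $X$ through a fiberwise analysis of geodesics followed by dominated convergence. The crucial structural input is that every geodesic segment $\sigma$ in $L^2(\Omega,X)$ is, after choosing representatives, \emph{fiberwise} a constant-speed geodesic of $X$: writing $\sigma_\omega(t):=\sigma(t)(\omega)$, for a.e.\ $\omega$ the curve $t\mapsto\sigma_\omega(t)$ is a constant-speed geodesic in $X$, with speed $v(\omega)$ satisfying $d(\sigma_\omega(s),\sigma_\omega(t))=v(\omega)\,|s-t|$ and $v\in L^2(\mu)$. I would establish this by the equality case of Minkowski's inequality. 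Fix $s<u<t$ and set $a(\omega)=d(\sigma_\omega(s),\sigma_\omega(u))$, $b(\omega)=d(\sigma_\omega(u),\sigma_\omega(t))$, $c(\omega)=d(\sigma_\omega(s),\sigma_\omega(t))$; then $0\le c\le a+b$ pointwise, so $\|c\|_2\le\|a+b\|_2\le\|a\|_2+\|b\|_2$, while the geodesic equation $d_{L^2}(\sigma(s),\sigma(t))=d_{L^2}(\sigma(s),\sigma(u))+d_{L^2}(\sigma(u),\sigma(t))$ forces equality throughout. Equality in $\|c\|_2=\|a+b\|_2$ gives $c=a+b$ a.e.\ (fiberwise distance-additivity), and equality in Minkowski forces $b=\lambda a$ a.e.\ with $\lambda=(t-u)/(u-s)$, which is exactly the constant-speed condition. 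Running this over a countable dense set of triples and extending by continuity (using that $\sigma(t)$ already exists in $L^2$ for every $t$) yields the claim a.e.; measurability of $v$ and of the fibers follows from that of $\omega\mapsto\sigma(s)(\omega)$.

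With this in hand, let $\gamma_1,\gamma_2$ be geodesic segments in $L^2(\Omega,X)$ issuing from a common point $f_0$, with fiber curves $\gamma_{i,\omega}$ and speeds $v_i\in L^2(\mu)$. Since $d_{L^2}(f_0,\gamma_i(s))^2=\int_\Omega s^2 v_i(\omega)^2\,d\mu=s^2\|v_i\|_2^2$, I would apply the Euclidean law of cosines in each fiber, writing $\theta_\omega(s,t):=\overline{\angle}_{f_0(\omega)}(\gamma_{1,\omega}(s),\gamma_{2,\omega}(t))$, to get
\[
d\bigl(\gamma_{1,\omega}(s),\gamma_{2,\omega}(t)\bigr)^2
= s^2 v_1(\omega)^2+t^2 v_2(\omega)^2-2st\,v_1(\omega)v_2(\omega)\cos\theta_\omega(s,t).
\]
Integrating over $\Omega$ and inserting the result into the law of cosines defining the comparison angle in $L^2(\Omega,X)$, the quadratic terms cancel and one obtains the clean identity
\[
\cos\overline{\angle}_{f_0}\bigl(\gamma_1(s),\gamma_2(t)\bigr)
=\frac{\displaystyle\int_\Omega v_1(\omega)\,v_2(\omega)\,\cos\theta_\omega(s,t)\,d\mu(\omega)}{\|v_1\|_2\,\|v_2\|_2}.
\]

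Now the hypothesis enters: since angles exist in $X$, for a.e.\ $\omega$ the fiber angle exists in the strict sense, so $\cos\theta_\omega(s,t)\to\cos\angle(\gamma_{1,\omega},\gamma_{2,\omega})$ as $s,t\to0$. To pass this convergence through the integral I would note that the two-parameter limit exists if and only if the limit along every sequence $(s_n,t_n)\to(0,0)$ with $s_n,t_n>0$ agrees; along any such sequence the integrand $v_1 v_2\cos\theta_\omega(s_n,t_n)$ is dominated by $v_1 v_2\in L^1(\mu)$ (integrability by Cauchy--Schwarz, $\int v_1 v_2\le\|v_1\|_2\|v_2\|_2<\infty$) and converges pointwise a.e. Dominated convergence then yields a common limit independent of the sequence, namely
\[
\cos\angle(\gamma_1,\gamma_2)=\frac{\int_\Omega v_1 v_2\,\cos\angle(\gamma_{1,\omega},\gamma_{2,\omega})\,d\mu}{\|v_1\|_2\,\|v_2\|_2},
\]
a value in $[-1,1]$ by the weighted Cauchy--Schwarz inequality. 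Hence the comparison angles converge, the Alexandrov angle between $\gamma_1$ and $\gamma_2$ exists in the strict sense, and angles exist in $L^2(\Omega,X)$.

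I expect the main obstacle to be the first step: carefully extracting the fiberwise constant-speed geodesic structure from an abstract $L^2$-geodesic, in particular handling the choice of jointly measurable representatives over a dense set of times, the a.e.\ extension by continuity to all times, and the degenerate cases in the Minkowski equality analysis (where $a$ or $b$ vanishes on a positive-measure set). Once that structure is in place, the law-of-cosines computation and the dominated convergence argument are routine.
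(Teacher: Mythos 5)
Your proposal is correct and follows essentially the same route as the paper's proof: decompose both geodesics fiberwise, express the comparison angle in $L^2(\Omega,X)$ via the law of cosines as the weighted integral $\int_\Omega v_1v_2\cos\theta_\omega\,d\mu$, and pass to the limit by dominated convergence with dominating function $v_1v_2\in L^1(\mu)$. The only difference is that your first step re-derives the fiberwise constant-speed geodesic structure from the equality case of Minkowski's inequality, whereas the paper simply cites Monod's characterization of geodesics in $L^2(\Omega,X)$ (Theorem~\ref{thm: monodgeodesics}), which also disposes of the measurability and representative-choice issues you flag as the main obstacle.
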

We prove this result in section~\ref{winkel} as a natural extension of our preliminary discussion on angles.

We call a map $f:X \rightarrow Y$ between metric spaces \textit{affine} if it preserves the class of linearly reparametrized geodesics. Important examples are dilations and projections $X\times Y \to Y$. 

Let $M$ be a Riemannian manifold of dimension at least two with irreducible universal covering and $Y$ be a geodesic metric space in which angles exist.
It turns out that all affine maps $f:M\to Y$ are trivial: they rescale distances uniformly, i.e. there exists $c \geq 0$ such that for all $p,q\in M$, $d_Y(f(p),f(q))=c\,d_M(p,q)$. 
This is a special case of the following more general statement which we will prove in section~\ref{MY}: for an affine map $f:M_1\times ... \times M_n \to Y$, where each $M_i$ is as above, there exist $c_1,...,c_n \geq 0$ such that for all $(p_1,...,p_n), (q_1,...,q_n) \in M_1\times ... \times M_n$, \[d_Y^2(f(p_1,...,p_n),f(q_1,...,q_n))=\sum_{i=1}^{n}c_i^2 d_{M_i}^2(p_i,q_i).\]
In particular, this applies to affine maps $f:M^n\to Y$, and the key idea is to extend this result to the space $L^2(\Omega,M)$ that can be interpreted as a generalized, weighted \textit{infinite} product of copies of $M$: we will show in section~\ref{L2MY} that for any affine map $f:L^2(\Omega,M) \rightarrow Y$, there exists $\eta \in L^\infty(\Omega)$, $\eta\geq 0$ such that for any $p,q \in L^2(\Omega,M)$, $$d_Y^2(f(p),f(q))=\int_\Omega \eta(\omega) d_M^2(p(\omega),q(\omega))\,d\mu(\omega).$$ Therefore, we characterized all affine maps from $L^2(\Omega,M)$ into geodesic metric spaces with angles. 

These in particular encompass the projection mappings of any splitting $L^2(\Omega,M)=Y\times \overline Y$:
by Lemma~\ref{angles}, and since angles exist in the above sense in Riemannian manifolds (cf. Burago-Burago-Ivanov \cite{Burago}), we know that  $L^2(\Omega,M)$ has angles.
Thus given any splitting $L^2(\Omega,M)= Y\times \overline Y$, we deduce that $Y$ and $\overline Y$ have angles and hence that the projections $L^2(\Omega,M)\to Y$ and $L^2(\Omega,M)\to \overline Y$ are affine maps of the above type. 

We make use of this observation in section~\ref{split} to show that, for any splitting $L^2(\Omega,M)=Y\times \overline Y$, there exists a measurable $A\subset \Omega$ such that $Y$  and $\overline Y$ are canonically isometric to $L^2(A,M)$ and $L^2(A^c,M)$, respectively. For atomless $\Omega$, this will lead to Theorem~\ref{L2factors1}.

Further, in section~\ref{locrig}, we use this to show that isometries are localisable in the following sense: given any isometry $\gamma:L^2(\Omega,M) \rightarrow L^2(\Omega,N)$ and measurable $A\subset \Omega$, there exists $\Psi(A)\subset \Omega$, such that $$\int_A d_M^2(f(\omega),g(\omega))d\mu(\omega) = \int_{\Psi(A)} d_N^2(\gamma(f)(\omega),\gamma(g)(\omega))\,d\mu(\omega).$$ 

 We observe that $A\mapsto \mu(\Psi^{-1}(A))$ is a measure. Using this insight, we eventually recover $\varphi \in \text{Aut}(\Omega)$ and $\rho\in L^2(\Omega,\text{Isom}(M,N))$ such that for $\mu$-a.e. $\omega\in \Omega$, $\gamma(f)(\omega)=\rho(\varphi(\omega))(f(\varphi(\omega))).$ Indeed we prove:
\begin{introthm}\label{isometriesM}
	Let $(\Omega,\mu)$ be a standard probability space, and $M$ and $N$ be complete Riemannian manifolds of dimension at least two with irreducible universal covers. A map $\gamma:L^2(\Omega,M) \to L^2(\Omega,N)$ is an isometry if and only if there exist $\varphi\in \text{Aut}(\Omega)$ and $\rho\in L^2(\Omega,\text{Isom}(M,N))$, such that for $\mu$-a.e. $\omega\in \Omega$, $\gamma(f)(\omega)=\rho(\varphi(\omega))(f(\varphi(\omega))).$
\end{introthm}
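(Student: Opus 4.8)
The \emph{if} direction is immediate. Given $\varphi\in\operatorname{Aut}(\Omega)$ and $\rho\in L^2(\Omega,\operatorname{Isom}(M,N))$, the prescribed map satisfies $\gamma(f)=\gamma_\rho(f)\circ\varphi$, so it is the composition of the type-(2) isometry $\gamma_\rho:L^2(\Omega,M)\to L^2(\Omega,N)$ with the type-(1) isometry $h\mapsto h\circ\varphi$ on $L^2(\Omega,N)$, and is therefore an isometry. The content lies in the converse, and my plan is to reconstruct first $\varphi$ and then $\rho$ from the localisation property of Section~\ref{locrig}.

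So let $\gamma$ be an isometry and let $\Psi$ be its associated localisation map, so that $\int_A d_M^2(f,g)\,d\mu=\int_{\Psi(A)}d_N^2(\gamma f,\gamma g)\,d\mu$ for every measurable $A$ and all $f,g$. First I would promote $\Psi$ to an automorphism of the measure algebra of $(\Omega,\mu)$: applying the same localisation to the isometry $\gamma^{-1}$ produces a map $\Psi'$, and choosing $f,g$ with $d_M(f,g)>0$ almost everywhere (e.g.\ constants at distinct points) one checks $\Psi'\Psi=\Psi\Psi'=\operatorname{id}$ modulo null sets, while $\sigma$-additivity is exactly the observation that $A\mapsto\mu(\Psi^{-1}(A))$ is a measure. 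That $\Psi$ preserves $\mu$ is where the hypotheses on $M$ re-enter: evaluating $\gamma$ on constant functions and localising exhibits, for almost every index, an affine map $M\to N$, which by the single-factor case of the rigidity of Section~\ref{MY} is a homothety; since $\dim M\ge 2$ and the universal cover of $M$ is irreducible, $M$ carries no nontrivial homothety, so these ratios collapse to a single common value, and the normalisation coming from $\gamma$ being a global isometry forces that value to be $1$ and $\Psi$ to be measure preserving. Because $\Omega$ is standard, a measure-preserving automorphism of its measure algebra is induced by a point automorphism (von Neumann--Rokhlin), yielding $\varphi\in\operatorname{Aut}(\Omega)$ with $\Psi(A)=\varphi^{-1}(A)$ modulo null sets.

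I would then remove $\varphi$ by passing to $\tilde\gamma(f):=\gamma(f)\circ\varphi^{-1}$, again an isometry $L^2(\Omega,M)\to L^2(\Omega,N)$, whose localisation map is now the identity. Hence $\int_A d_M^2(f,g)\,d\mu=\int_A d_N^2(\tilde\gamma f,\tilde\gamma g)\,d\mu$ for all $A$, which forces the pointwise identity $d_M(f,g)(\omega)=d_N(\tilde\gamma f,\tilde\gamma g)(\omega)$ for almost every $\omega$; in particular $\tilde\gamma$ is local, meaning that $f=g$ on $A$ implies $\tilde\gamma f=\tilde\gamma g$ on $A$. Writing $c_p\equiv p$ for the constant functions, I set $\rho(\omega)(p):=\tilde\gamma(c_p)(\omega)$. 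Fixing a countable dense $D\subset M$, the pointwise identity holds simultaneously for all pairs in $D$ off a single null set, so on a full-measure set $\rho(\omega)$ is distance preserving on $D$ and extends, by completeness of $N$, to an isometric embedding $M\to N$. Locality gives $\tilde\gamma(f)(\omega)=\rho(\omega)(f(\omega))$ first for simple $f$ and then, by $L^2$- and almost-everywhere approximation together with continuity of $\rho(\omega)$, for all $f$. Running the same construction for $\tilde\gamma^{-1}$ produces inverse embeddings, so each $\rho(\omega)$ is onto, i.e.\ $\rho(\omega)\in\operatorname{Isom}(M,N)$; and since $\omega\mapsto\rho(\omega)(p)=\tilde\gamma(c_p)(\omega)$ lies in $L^2(\Omega,N)$ for every $p$, we obtain $\rho\in L^2(\Omega,\operatorname{Isom}(M,N))$. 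Unwinding $\tilde\gamma=\gamma_\rho$ finally gives $\gamma(f)=\gamma_\rho(f)\circ\varphi$, that is, $\gamma(f)(\omega)=\rho(\varphi(\omega))(f(\varphi(\omega)))$ almost everywhere, as claimed.

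The delicate step is the recovery of $\varphi$. The localisation map $\Psi$ is a priori only a set transformation, and turning it into a genuine measure-preserving point automorphism is where both hypotheses are indispensable: standardness of $\Omega$ for the spatialisation, and the homothety-rigidity of affine maps out of an irreducible $M$ of dimension at least two to exclude a nonconstant or nonunit local rescaling that would otherwise allow $\varphi$ to distort the measure. By contrast, the extraction of $\rho$ is essentially measurable bookkeeping, powered by separability of $M$ and completeness of $N$.
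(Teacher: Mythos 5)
Your proposal is correct in substance and shares the paper's overall skeleton --- the localisation map $\Psi$ on the measure algebra, pointwise dilations extracted from constant functions, homothety-rigidity of non-Euclidean $M$, and surjectivity of $\rho(\omega)$ via the same construction for $\gamma^{-1}$ --- but it reorders the two delicate steps and so takes a genuinely different route through the spatialisation. The paper spatialises \emph{first}: knowing only that $\Psi$ is a $\sigma$-additive bijection with $\widetilde\mu:=\mu\circ\Psi^{-1}$ mutually absolutely continuous with $\mu$, it builds the linear isometry $T\bigl(\sum_i a_i\chi_{A_i}\bigr)=\rho\sum_i a_i\chi_{\Psi(A_i)}$ of $L^1(\Omega)$, invokes Banach's description of $L^1$-isometries (split into the atomless part on $[0,1]$ and the purely atomic part), and only \emph{afterwards} proves that $d(\varphi_\ast\mu)/d\mu$ is constant, using the dilation rigidity. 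You instead prove measure preservation already at the Boolean level --- from $\mu(A)\,d_M^2(p,q)=\int_{\Psi(A)}d_N^2(\gamma c_p,\gamma c_q)\,d\mu$ for all $A$ one gets $d_N^2(\gamma(c_p)(\omega),\gamma(c_q)(\omega))=\tfrac{d\widetilde\mu}{d\mu}(\omega)\,d_M^2(p,q)$ a.e.\ without any point map --- and then cite the von Neumann--Rokhlin point-realisation theorem for measure-preserving automorphisms of the measure algebra of a standard space. This buys a cleaner spatialisation (no Banach--Lamperti, no atomic/atomless case split), at the price of needing the homothety rigidity earlier; the paper's ordering is forced on it because its spatialisation tool works for merely non-singular $\Psi$. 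The second half of your argument (locality of $\tilde\gamma$, definition of $\rho$ on a countable dense set, extension by completeness of $N$, simple functions plus density, surjectivity from $\tilde\gamma^{-1}$) matches the paper's corresponding steps closely and is sound.

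One gloss deserves repair: you assert that the dilation ratios ``collapse to a single common value'' because ``$M$ carries no nontrivial homothety,'' but at that stage your maps are dilations $M\to N$, not self-maps, and you have not yet shown any of them is invertible. The fix is the paper's own trick at the analogous step: run the constant-function extraction for $\gamma^{-1}$ as well, obtaining dilations $N\to M$ with factors $\lambda'(\omega')$, and compose to get self-dilations of $M$ with factor $\lambda(\omega)\lambda'(\omega')$; such a self-dilation of a complete connected manifold is automatically surjective (injectivity plus invariance of domain makes the image open, completeness makes it closed), so the paper's lemma that a non-Euclidean $M$ admits no surjective dilation with factor $\neq 1$ forces $\lambda(\omega)\lambda'(\omega')=1$ for a.e.\ pairs, whence $\lambda$ is constant, and the total-mass normalisation gives $\lambda\equiv 1$. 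Separately, your appeal to the single-factor case of Section~\ref{MY} is unnecessary and slightly off: affinity of $p\mapsto\gamma(c_p)(\omega)$ is not known a priori, but the displayed Radon--Nikodym identity already exhibits these maps as dilations directly. With these points tightened, your proof is complete.
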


Here $L^2(\Omega, \text{Isom}(M,N))$ denotes the set of maps $\rho: \Omega \to \text{Isom}(M,N)$ such that for some (and hence any) $x \in M$, the map $\omega \mapsto \rho(\omega)(x)$ lies in $L^2(\Omega, N)$.  The existence of such a map implies the existence of an isometry between $M$ and $N$, which implies Theorem~\ref{MN} in the case $\text{dim}(M), \text{dim}(N)\geq 2$.  Indeed both Theorems~\ref{semi-main} and \ref{MN} are corollaries of Theorem~\ref{isometriesM}, and complete proofs are provided in Section~\ref{final}.
 
We finally remark that we can analogously define spaces $L^p(\Omega, M)$ for $p\neq2$. A natural and interesting question is whether the above results still hold for these spaces. Even though many parts of our strategy naturally carry over, some key geometric arguments fail in this case. In particular, the existence of angles established in Lemma~\ref{angles} fails for general $L^p(\Omega, X)$. Given the reasons above and the geometric relevance of $L^2$-spaces mentioned at the beginning of the paper, we restrict our attention to the case $p=2$.\\

\section{Preliminaries} \label{sec_pre}
\subsection{Measure spaces}\label{measure}

We briefly introduce the measure-theoretic framework underlying our work. For more details, we refer to Bogachev \cite[Chapter 9]{MR2267655}. 
\begin{definition} Let $(\Omega_1,\mathcal A_1, \mu_1)$ and $(\Omega_2,\mathcal A_2,\mu_2)$ be measure spaces. 
	\begin{enumerate}
		\item A bijection $\varphi:\Omega_1 \to \Omega_2$ with $A\in \mathcal A_1 \iff \varphi(A) \in \mathcal A_2$ and $\mu_2(\varphi(A))=\mu_1(A)$ for all $A\in \mathcal A_1$ is a \textbf{strict isomorphism}.
		\item A map $\varphi:\Omega_1 \to \Omega_2$ for which there exist $N_1\in \mathcal A_1$ and $N_2 \in \mathcal A_2$ with $\mu_1(N_1)=\mu_2(N_2)=0$, and such that $\Omega_1\setminus N_1 \to \Omega_2 \setminus N_2$, $\omega \mapsto \varphi(\omega)$ is a strict isomorphism is called an \textbf{isomorphism}.
	\end{enumerate}
\end{definition} 

We also record, that similarly, $\varphi:\Omega_1 \to \Omega_2$ with $N_1\in \mathcal A_1$ and $N_2 \in \mathcal A_2$ such that $\mu_1(N_1)=\mu_2(N_2)=0$, and such that $\Omega_1\setminus N_1 \to \Omega_2 \setminus N_2$, $\omega \mapsto \varphi(\omega)$ is bijective is called \textit{almost everywhere bijective}.

A measure space isomorphism $\varphi:\Omega \to \Omega$ is an \textit{automorphism}. The group of automorphisms of $\Omega$ under composition is denoted Aut$(\Omega)$.

We typically omit the $\sigma$-algebra and write $(\Omega,\mu):=(\Omega,\mathcal{A},\mu)$.

\textit{Standard probability} or \textit{Lebesgue-Rokhlin spaces} were introduced by V.A. Rokhlin as probability spaces satisfying certain natural axiomatic properties. This class of spaces encompasses a very wide range of cases, including virtually all those arising naturally in geometric settings. Indeed, for example, the measure-theoretic completion of any Polish space equipped with its Borel $\sigma$-algebra and a probability measure yields a standard probability space (cf. Ito in \cite[Section 2.4]{Ito}).

The essential property for us will be that these spaces are always isomorphic to the Lebesgue unit interval and a collection of atoms: 

\begin{theorem}\label{Rokhlin}(Bogachev, \cite[Theorem 9.4.7]{MR2267655})
	Let $(\Omega,\mu)$ be a standard probability space. Then $\Omega$ is isomorphic to $[0,1]\sqcup \N$, equipped with the measure $\nu:=c\lambda+\sum_{n=1}^{\infty}p_n\delta_n$, where $c\geq 0$ and $p_n\geq 0$ for all $n\in \N$. \end{theorem}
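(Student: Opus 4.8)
The plan is to split $(\Omega,\mu)$ into its purely atomic and atomless parts, matching the former with the point masses $\sum_n p_n \delta_n$ and the latter with $c\lambda$ on $[0,1]$. Recall that an \emph{atom} is a set $A$ of positive measure admitting no measurable subset $B$ with $0 < \mu(B) < \mu(A)$. Since pairwise disjoint atoms have measures summing to at most $\mu(\Omega) = 1$, for each $k$ there are fewer than $k$ atoms of measure exceeding $1/k$; hence there are at most countably many atoms, and I enumerate their classes as $a_1, a_2, \dots$ with masses $p_n := \mu(a_n)$ (padding with $p_n = 0$ if there are only finitely many). In a standard space the measure algebra is countably generated and separates points mod $0$, and I would use this to show that each atom is equivalent mod $0$ to a single point: an indivisible set on which a separating countable family is constant mod $0$ must be concentrated at one point. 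Sending $a_n \mapsto n \in \N$ then gives a strict isomorphism of the atomic part onto $(\N, \sum_n p_n \delta_n)$. Writing $\Omega_0 := \Omega \setminus \bigcup_n a_n$ and $c := \mu(\Omega_0) = 1 - \sum_n p_n \ge 0$, the space $\Omega_0$ is atomless.

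The heart of the argument is the atomless case: I must show $(\Omega_0, \mu|_{\Omega_0})$ is isomorphic to $([0,1], c\lambda)$. Using standardness I may realize $\Omega_0$, up to isomorphism mod $0$, as a Borel probability measure on a Polish space, and then code it into the Cantor space by $\Phi(\omega) := (\mathbf 1_{B_k}(\omega))_k \in \{0,1\}^\N$ for a countable generating family $\{B_k\}$. Since the $B_k$ generate and separate points mod $0$, the map $\Phi$ is injective off a null set, and by the Lusin--Souslin theorem (an injective Borel image of a Borel set is Borel, with Borel inverse) it is an isomorphism mod $0$ of $\Omega_0$ onto $(\{0,1\}^\N, \Phi_*\mu)$, the pushforward measure $\nu := \Phi_*\mu$ being again atomless. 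Identifying $\{0,1\}^\N$ with $[0,1]$ via binary digits, this reduces the claim to the statement that an atomless Borel measure $\nu$ on $[0,1]$ of total mass $c$ is isomorphic to $([0,1], c\lambda)$.

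For this last reduction I would use the distribution function $F(x) := \nu([0,x])$. Atomlessness makes $F$ continuous and nondecreasing with $F(0) = 0$ and $F(1) = c$, and $F$ pushes $\nu$ forward to Lebesgue measure on $[0,c]$; the maximal intervals on which $F$ is constant are at most countably many and $\nu$-null, so after deleting them $F$ becomes an isomorphism mod $0$ from $([0,1], \nu)$ onto $([0,c], \lambda)$. Composing with the rescaling $t \mapsto t/c$, which sends $\lambda|_{[0,c]}$ to $c\lambda$ on $[0,1]$, yields the desired isomorphism $\Omega_0 \cong ([0,1], c\lambda)$. Taking the disjoint union of this isomorphism with the bijection of atoms onto $(\N, \sum_n p_n \delta_n)$ produces an isomorphism of $\Omega$ onto $[0,1] \sqcup \N$ carrying $\mu$ to $\nu = c\lambda + \sum_n p_n \delta_n$, as required.

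I expect the main obstacle to be the coding step in the atomless case: verifying that $\Phi$ is a genuine isomorphism mod $0$ rather than merely a measure-preserving map. This requires that the generating family separate points off a single null set, so that $\Phi$ is essentially injective, and that the inverse be measurable, which is exactly where the full force of standardness enters through the Borel isomorphism machinery; without it the coding map can collapse positive mass and fail to be invertible. By comparison, the countability of the atoms, the distribution-function argument, and the final reassembly are routine.
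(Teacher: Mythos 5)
The paper does not prove this statement at all: it is quoted verbatim from the literature (Bogachev, Theorem 9.4.7), so there is no internal proof to compare against. Your argument is, in outline, the standard proof of Rokhlin's classification theorem -- essentially the one found in Bogachev and in Rokhlin's original work -- and it is correct. The decomposition into atomic and atomless parts, the countability of atoms, and the distribution-function argument (continuity of $F$ from atomlessness, $F_*\nu=\lambda|_{[0,c]}$, deletion of the countably many $\nu$-null constancy intervals to get injectivity, measurability of the monotone inverse) are all sound. The two genuinely nontrivial points are exactly the ones you flagged: (a) that each atom is a singleton mod $0$, and (b) that the coding map $\Phi$ into $\{0,1\}^{\N}$ is an isomorphism mod $0$ and not merely measure-preserving, which is where Lusin--Souslin and the full strength of standardness enter. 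One remark on (b): how much work this step costs depends on which axiomatization of ``standard'' you adopt. Under the definition the paper implicitly uses (following Bogachev and Ito: the completion of a Borel probability measure on a Polish space), realizing $\Omega_0$ on a Polish space is immediate and a countable basis of open sets separates points exactly, so $\Phi$ is globally injective; under Rokhlin's original axioms (countable basis plus completeness with respect to it) you would instead argue that the image of $\Phi$ is the set of consistent sequences. Minor loose ends, all routine: the identification of $\{0,1\}^{\N}$ with $[0,1]$ via binary digits fails to be injective only on the countable (hence $\nu$-null, by atomlessness) set of dyadic sequences; the degenerate case $c=0$ should be dispatched trivially; and since standard spaces are complete measure spaces, the Borel-level isomorphism must be extended to the completions, which is automatic for measure-preserving Borel isomorphisms mod $0$.
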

	
Here $\lambda$ denotes the Lebesgue measure on $[0,1]$ and $\delta_{n}$ the Dirac measure at  $n \in \N$.  The atoms are represented by the integers and the space is atomless if $p_n=0$ for all $n\in \N$. Of course $c+\sum_{n=0}^\infty p_n=1$.
\subsection{Geodesics and affine maps}

Given an interval $I\subset \R$ and a metric space $(X,d)$, an isometric embedding $\gamma: I \rightarrow X$ is called a \textit{geodesic}. $(X,d)$ is called a \textit{geodesic metric space} if every two points in $X$ are joined by a geodesic. A map $f:X \to Y$ between metric spaces $X$ and $Y$ is a called a \textit{$\lambda$-dilation} if for all $x,y\in X$, $d_Y(f(x),f(y))=\lambda d_X(x,y)$. The set of $\lambda$-dilations $f:X \to Y$ is denoted $\text{Dil}_\lambda(X,Y)$; the set of all dilations, $\bigcup_{\lambda\geq 0}\text{Dil}_\lambda(X,Y)$, as $\text{Dil}(X,Y)$. A bijective $\lambda$-dilation with $\lambda=1$ is an isometry.
\begin{definition} 
	A map $f:X \rightarrow Y$ between metric spaces $X$ and $Y$ is called affine if  it preserves the class of linearly reparametrized geodesics. 
\end{definition}

In other words, given a geodesic $\gamma: I \rightarrow X$; $f(\gamma):I \rightarrow Y$, $t \mapsto f(\gamma(t))$ is a linearly reparametrized geodesic, i.e. there exists a constant $\rho(\gamma)\geq 0$, called reparametrization factor, such that for all $t,t^\prime \in I$, $$d_Y(f(\gamma(t)),f(\gamma(t^\prime)))= \rho(\gamma) |t-t^\prime|.$$
We stress that in contrast to a dilation, this factor $\rho:=\rho(\gamma)$ depends on the considered geodesic $\gamma$.

The most important natural examples of affine maps are dilations and projections $X\times Y \to X$. Indeed projections are affine since a map into a product space is a linearly reparametrized geodesic precisely when its components are (see Bridson and Haefliger \cite[I.5.3]{BH}).

Finally, for a metric space $X:=(X,d)$ and $a\geq 0$, $aX$ denotes the rescaled space $(X,a\cdot d)$. For $a=0$ this space collapses to a point. 
\subsection{$L^p$-function spaces with metric space targets}\label{Lp}
We recall the notion of $L^p$-spaces with metric space targets. See, for example, Korevaar and Schoen \cite{KS} and Monod \cite{Monod} for more detailed expositions. We begin by defining the space $L^p(\Omega, X)$ of $L^p$-functions $f:\Omega \longrightarrow X$.
\begin{definition}\label{def: L^2}
Let $(\Omega, \mu)$ be a finite measure space, $(X,d)$ be a metric space equipped with its Borel $\sigma$-algebra and $1\leq p < \infty$. We denote by $L^p(\Omega,X)$ the space of all measurable functions $f:\Omega\longrightarrow X$ (identified up to null-sets) having separable range and which satisfy, for some (and hence any) $x\in X$, $$\int_\Omega d^p(f(\omega),x)\,d\mu(\omega)<\infty.$$
\end{definition}
Note that if $f,f^\prime:\Omega \longrightarrow X$ are measurable, $(f,f^\prime):\Omega \longrightarrow X\times X$,  $\omega\mapsto (f(\omega),f^\prime(\omega))$ is also measurable. Thus in particular the function $d^p(f,f^\prime):\Omega \longrightarrow \R$, $\omega\mapsto d^p(f(\omega), f^\prime(\omega))$ is measurable and hence as a special case, the above integral is well defined. In light of this remark, the following is also well-defined: 
\begin{lemma}
	 Let $d_{L^p}: L^p(\Omega,X) \times L^p(\Omega,X) \longrightarrow \R_{\geq0}$ be given by: $$d_{L^p}(f,f^\prime):=\left(\int_\Omega d^p(f(\omega),f^\prime(\omega))\,d\mu(\omega)\right)^{\frac{1}{p}}.$$
	 Then $d_{L^p}$ defines a metric on $L^p(\Omega,X)$, the so-called $L^p$-metric.
\end{lemma}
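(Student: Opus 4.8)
The plan is to check that $d_{L^p}$ is a well-defined, finite, nonnegative function satisfying the three metric axioms, reducing every assertion to a statement about the nonnegative measurable real-valued function $\omega \mapsto d(f(\omega),g(\omega))$, whose measurability was recorded in the remark preceding the statement. Throughout, the essential tool is Minkowski's inequality for the classical scalar space $L^p(\Omega,\R)$, together with the fact that the $L^p$-norm is monotone on nonnegative functions.

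First I would verify that $d_{L^p}$ actually takes values in $\R_{\geq 0}$. Fix $x\in X$. For $f,f'\in L^p(\Omega,X)$ the pointwise triangle inequality in $X$ gives $d(f(\omega),f'(\omega))\leq d(f(\omega),x)+d(x,f'(\omega))$ for every $\omega$, and applying Minkowski's inequality to the right-hand side together with Definition~\ref{def: L^2} shows that $d_{L^p}(f,f')\leq \|d(f,x)\|_{L^p}+\|d(x,f')\|_{L^p}<\infty$. Nonnegativity is immediate from the definition.

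For positive definiteness, I would observe that $d_{L^p}(f,f')=0$ forces $\int_\Omega d^p(f(\omega),f'(\omega))\,d\mu(\omega)=0$, so the nonnegative integrand vanishes $\mu$-almost everywhere; hence $f(\omega)=f'(\omega)$ for $\mu$-a.e. $\omega$, which is exactly equality in $L^p(\Omega,X)$ under the identification up to null sets, and the converse is clear. Symmetry is equally trivial, following from $d(f(\omega),f'(\omega))=d(f'(\omega),f(\omega))$ pointwise.

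The remaining and only substantive step is the triangle inequality. For $f,g,h\in L^p(\Omega,X)$ I would combine the pointwise bound $d(f(\omega),h(\omega))\leq d(f(\omega),g(\omega))+d(g(\omega),h(\omega))$ with monotonicity of the $L^p$-norm and then Minkowski's inequality in $L^p(\Omega,\R)$ to conclude
$$d_{L^p}(f,h)\leq \|d(f,g)+d(g,h)\|_{L^p}\leq d_{L^p}(f,g)+d_{L^p}(g,h).$$
I do not anticipate a genuine obstacle here: once measurability and the finiteness established above are in hand, the entire argument is a transfer of the scalar $L^p$ theory through the pointwise metric structure of $X$, and the only point requiring care is ensuring that all invoked integrals are finite so that Minkowski's inequality legitimately applies.
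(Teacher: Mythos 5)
Your proof is correct and is precisely the standard argument the paper has in mind: the paper states this lemma without proof (deferring to the expositions of Korevaar--Schoen and Monod), and your reduction of every axiom to the scalar function $\omega\mapsto d(f(\omega),f'(\omega))$ --- whose measurability the paper records just before the lemma --- with finiteness and the triangle inequality obtained from the pointwise triangle inequality in $X$ together with Minkowski's inequality in $L^p(\Omega,\R)$, and positive definiteness from the identification of functions up to null sets, is exactly the expected route. No gaps; the one point needing care, that Minkowski is applied only to functions already known to lie in $L^p(\Omega,\R)$, is handled by your finiteness step.
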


 Using the change of variable formula, we obtain:
\begin{lemma}\label{lem:cv}
	Let $(X,d)$ be a metric space. If $(\Omega_1, \mu_1)$ and $(\Omega_2, \mu_2)$ are two isomorphic measure spaces with isomorphism $\varphi:\Omega_1\longrightarrow \Omega_2$, then $$\Psi: L^p(\Omega_2,X) \longrightarrow L^p(\Omega_1,X)$$ $$f\mapsto f\circ \varphi$$ is a well-defined isometry of metric spaces. 
\end{lemma}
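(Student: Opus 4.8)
The plan is to verify in turn that $\Psi$ is well-defined, distance-preserving, and bijective, with the abstract change-of-variables formula doing the main work. First I would address well-definedness. By definition of an isomorphism there are null sets $N_1\in\mathcal{A}_1$, $N_2\in\mathcal{A}_2$ such that $\varphi$ restricts to a strict isomorphism $\Omega_1\setminus N_1\to\Omega_2\setminus N_2$. For $f\in L^p(\Omega_2,X)$ and a Borel set $B\subseteq X$ one has the set-theoretic identity $(f\circ\varphi)^{-1}(B)=\varphi^{-1}(f^{-1}(B))$; since the strict isomorphism $\varphi$ pulls measurable sets back to measurable sets, $f\circ\varphi$ is measurable on the full-measure set $\Omega_1\setminus N_1$, and extending it arbitrarily over $N_1$ makes it measurable on all of $\Omega_1$. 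Its range is contained in the range of $f$ and is therefore separable. Finally, if $f=f'$ outside a $\mu_2$-null set $Z$, then $f\circ\varphi=f'\circ\varphi$ outside $\varphi^{-1}(Z)$, which is $\mu_1$-null because $\varphi$ is measure-preserving; hence $\Psi$ descends to the $L^p$-equivalence classes.

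The core identity comes from the transformation theorem. Since $\varphi$ is a strict isomorphism on full-measure subsets, $\mu_1(\varphi^{-1}(B))=\mu_2(B)$ for all $B\in\mathcal{A}_2$, that is, $\varphi_*\mu_1=\mu_2$. The change-of-variables formula for a pushforward measure then yields, for every nonnegative measurable $h:\Omega_2\to\R$,
\[
\int_{\Omega_1}(h\circ\varphi)\,d\mu_1=\int_{\Omega_2}h\,d\mu_2.
\]
Applying this to $h=d^p(f,f')$, which is measurable by the remark preceding the lemma, gives
\[
\int_{\Omega_1}d^p(f\circ\varphi,f'\circ\varphi)\,d\mu_1=\int_{\Omega_2}d^p(f,f')\,d\mu_2.
\]
Taking $f'$ to be a constant function shows at once that $f\circ\varphi$ lies in $L^p(\Omega_1,X)$, so $\Psi$ maps into the asserted target, while the general case gives $d_{L^p}(\Psi f,\Psi f')=d_{L^p}(f,f')$, i.e. $\Psi$ is distance-preserving.

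It remains to check bijectivity. Because $\varphi^{-1}:\Omega_2\to\Omega_1$ is again an isomorphism, the same reasoning applies to the map $\Psi':L^p(\Omega_1,X)\to L^p(\Omega_2,X)$, $g\mapsto g\circ\varphi^{-1}$. Since $\varphi\circ\varphi^{-1}$ and $\varphi^{-1}\circ\varphi$ agree with the respective identities off null sets, the compositions $\Psi\circ\Psi'$ and $\Psi'\circ\Psi$ are the identities on the $L^p$-equivalence classes, so $\Psi$ is a bijection with inverse $\Psi'$. A distance-preserving bijection is an isometry, which completes the proof.

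I expect the only genuinely delicate point to be the bookkeeping of null sets: one must consistently work on the full-measure subsets where $\varphi$ is a strict isomorphism and check that each exceptional set remains null after pulling back, which is precisely guaranteed by measure-preservation. Everything else reduces to the standard pushforward/change-of-variables formula, so no substantive analytic difficulty arises.
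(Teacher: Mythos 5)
Your proof is correct and follows the same route as the paper, which simply invokes the change-of-variables formula for the pushforward $\varphi_*\mu_1=\mu_2$ without spelling out the details. Your additional bookkeeping (measurability via $(f\circ\varphi)^{-1}(B)=\varphi^{-1}(f^{-1}(B))$, separability of the range, null-set consistency, and bijectivity via $\Psi'\colon g\mapsto g\circ\varphi^{-1}$) fills in exactly the routine verifications the paper leaves implicit.
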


Put in words, the isometry class of the space $L^p(\Omega,X)$, for fixed $(X,d)$, only depends on the measure space class of $\Omega$.

Next we record the following result characterizing the geodesics in $L^2$-spaces, see Monod \cite{Monod} for more details.

\begin{theorem}(Monod, \cite[Proposition 44]{Monod}) \label{thm: monodgeodesics}
	Let $I\subset \R$ be any interval. A continuous map $\sigma:I\longrightarrow L^2(\Omega,X)$ is a geodesic if and only if there is a measurable map $\alpha:\Omega \longrightarrow \R_{\geq 0}$ with $$\int_\Omega \alpha(\omega)^2 \,d\mu(\omega)=1$$ and a collection $\{\sigma^\omega\}_{\omega\in \Omega}$ of geodesics $\sigma^\omega:\alpha(\omega)I \longrightarrow X$ such that $$\sigma(t)(\omega)=\sigma^\omega(\alpha(\omega)t) \text{ for all } t\in I \text{ and } \mu\text{-almost every } \omega\in \Omega.$$ \end{theorem}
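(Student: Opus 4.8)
The plan is to prove the two implications separately, treating the backward direction as a short computation and concentrating on the forward direction, where all the substance lies. For the backward direction, suppose we are handed a measurable $\alpha:\Omega\to\R_{\geq0}$ with $\int_\Omega\alpha(\omega)^2\,d\mu(\omega)=1$ together with geodesics $\sigma^\omega:\alpha(\omega)I\to X$ satisfying $\sigma(t)(\omega)=\sigma^\omega(\alpha(\omega)t)$. Since each $\sigma^\omega$ is an isometric embedding, $d(\sigma(s)(\omega),\sigma(t)(\omega))=\alpha(\omega)|s-t|$ for a.e. $\omega$, so that
$$d_{L^2}(\sigma(s),\sigma(t))^2=\int_\Omega\alpha(\omega)^2|s-t|^2\,d\mu(\omega)=|s-t|^2.$$
Thus $\sigma$ is an isometric embedding, i.e. a geodesic; measurability of $\omega\mapsto\sigma(t)(\omega)$ and $L^2$-integrability are built into the hypothesis on the family.

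For the forward direction I would start from the equality case of the triangle inequality. Assume $\sigma$ is a geodesic, so $d_{L^2}(\sigma(s),\sigma(t))=|s-t|$ for all $s,t$. Fixing a triple $s<u<t$, set $a(\omega):=d(\sigma(s)(\omega),\sigma(u)(\omega))$, $b(\omega):=d(\sigma(u)(\omega),\sigma(t)(\omega))$ and $c(\omega):=d(\sigma(s)(\omega),\sigma(t)(\omega))$. The pointwise triangle inequality gives $c\leq a+b$, while the geodesic property yields $\|c\|_{L^2}=t-s=\|a\|_{L^2}+\|b\|_{L^2}$. Feeding these into Minkowski's inequality collapses the chain $\|a+b\|_{L^2}\leq\|a\|_{L^2}+\|b\|_{L^2}=\|c\|_{L^2}\leq\|a+b\|_{L^2}$ to a chain of equalities. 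From $\|c\|_{L^2}=\|a+b\|_{L^2}$ with $c\leq a+b$ I obtain $c=a+b$ a.e. (metric betweenness of $\sigma(u)(\omega)$), and from the equality case of Minkowski in the strictly convex space $L^2$ I obtain that $a$ and $b$ are non-negatively proportional, so $b=\tfrac{t-u}{u-s}\,a$ a.e.

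This proportionality says exactly that the average speed $d(\sigma(s)(\omega),\sigma(u)(\omega))/(u-s)$ is unchanged when $[s,u]$ is replaced by the adjacent interval $[u,t]$. Running the argument over all triples of rational parameters and intersecting the countably many full-measure sets, I get a single conull set $\Omega_0$ on which this average speed is a constant $\alpha(\omega)\geq0$ independent of the rational subinterval and on which all rational betweenness relations hold. Hence $d(\sigma(s)(\omega),\sigma(t)(\omega))=\alpha(\omega)|s-t|$ for all rational $s,t$ and $\omega\in\Omega_0$; measurability of $\alpha$ is clear as a ratio of measurable functions, and evaluating at one fixed pair $s<t$ gives
$$\int_\Omega\alpha(\omega)^2\,d\mu(\omega)=\frac{1}{(t-s)^2}\int_\Omega d^2(\sigma(s)(\omega),\sigma(t)(\omega))\,d\mu(\omega)=1.$$
On $\Omega_0$ the map $q\mapsto\sigma(q)(\omega)$ is then a constant-speed isometric embedding of $I\cap\Q$ into $X$, which extends by uniform continuity to a geodesic $\sigma^\omega$ defined on $\alpha(\omega)I$.

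The hard part will be the final passage from this rational skeleton to all real parameters: one must upgrade the countably many pointwise identities to a curve defined for every $t\in I$ and a.e. $\omega$, and guarantee that the extension lands in $X$ rather than merely in its completion. I would resolve both issues using the continuity of $\sigma$ in $L^2$: for irrational $t$ and rationals $q_n\to t$ we have $\sigma(q_n)\to\sigma(t)$ in $L^2$, so along a subsequence $\sigma(q_n)(\omega)\to\sigma(t)(\omega)$ for a.e. $\omega$. Since $\{\sigma(q_n)(\omega)\}_n$ is Cauchy in $X$ for $\omega\in\Omega_0$, this identifies the continuous extension $\sigma^\omega(\alpha(\omega)t)$ with the given value $\sigma(t)(\omega)\in X$ almost everywhere, which simultaneously confirms that the limit stays in $X$ and yields the desired factorization $\sigma(t)(\omega)=\sigma^\omega(\alpha(\omega)t)$ for all $t\in I$ and a.e. $\omega$.
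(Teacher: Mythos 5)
The paper itself offers no proof of this statement (it is quoted from Monod, Proposition~44), so your proposal must be judged on its own merits. Most of it holds up: the backward direction is a correct computation, and in the forward direction the equality chain in Minkowski, the consequences $c=a+b$ a.e.\ and $b=\tfrac{t-u}{u-s}\,a$ a.e., the chaining over rational triples to a single conull set carrying a measurable speed $\alpha$ with $\int_\Omega \alpha^2\,d\mu=1$, are all sound. You also correctly locate the danger point. But your resolution of that point has a genuine gap: the subsequence argument identifies the continuous extension with $\sigma(t)(\omega)$ only up to a null set \emph{depending on} $t$. To produce the family $\{\sigma^\omega\}$ at all, you need that for a.e.\ \emph{fixed} $\omega$ the rational-skeleton isometry $q\mapsto\sigma(q)(\omega)$ has limits in $X$ at \emph{every} parameter of $\alpha(\omega)I$; your per-$t$ statement, even after a Fubini argument over $\Omega\times I$, yields only ``for a.e.\ $\omega$, for Lebesgue-a.e.\ $t$,'' which does not define a geodesic on all of $\alpha(\omega)I$.

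Moreover, this gap is not repairable as the statement is printed for a bare metric space $X$: the theorem is then false. Take $\Omega=[0,1]$ with Lebesgue measure, $X=\R\setminus\{0\}$, and $\sigma(t)(\omega):=t-\omega$, redefined arbitrarily in $X$ on the null set $\{\omega=t\}$. This is a geodesic in $L^2(\Omega,X)$, yet any decomposition forces $\alpha\equiv 1$ a.e.\ and, for a.e.\ $\omega$, $\sigma^\omega(t)=t-\omega$ for a.e.\ $t$, hence (by continuity of a geodesic) for all $t$ --- and this curve exits $X$ at $t=\omega$. So no family of $X$-valued fiber geodesics exists. The missing ingredient is completeness of $X$, which is part of Monod's setting and holds in all of the paper's applications (complete Riemannian targets): with $X$ complete, the extension of the rational skeleton automatically lands in $X$, and your per-$t$ subsequence identification then finishes the proof correctly (the conclusion ``for all $t\in I$ and a.e.\ $\omega$'' is indeed meant, and used in the paper, with the null set allowed to depend on $t$). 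One further small point: handle $\alpha(\omega)=0$ explicitly, where $\alpha(\omega)I=\{0\}$ and the fiber geodesic is a single point; your argument degenerates gracefully there, but it deserves a sentence.
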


Here $\alpha(\omega)I$ denotes the interval obtained by scaling $I$ with $\alpha(\omega)$.

Further, we record the following, see again Monod and Korevaar-Schoen \cite{Monod, KS} for details. 
\begin{lemma}\label{lem: L2}
	Let $(\Omega, \mu)$ be a finite measure space. Then we have:\begin{enumerate}
		\item If $(X,d)$ is a complete metric space, then so is $(L^2(\Omega,X),d_{L^2})$.
		\item If $(X,d)$ is a (uniquely) geodesic metric space, then so is $(L^2(\Omega,X),d_{L^2})$. 
	\end{enumerate}
\end{lemma}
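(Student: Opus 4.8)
The plan is to establish the two assertions separately by direct constructions modeled on the classical scalar ($X=\R$) case, the only genuine content being the measure-theoretic verifications (measurability and essential separability of the limiting, respectively interpolating, functions). For completeness I would argue as in the Riesz--Fischer theorem. Given a Cauchy sequence $(f_n)$ in $L^2(\Omega,X)$, I first pass to a subsequence $(f_{n_k})$ with $d_{L^2}(f_{n_{k+1}},f_{n_k})\le 2^{-k}$. Setting $S(\omega):=\sum_{k}d(f_{n_{k+1}}(\omega),f_{n_k}(\omega))$, the Minkowski inequality in $L^2$ bounds the $L^2$-norm of every partial sum uniformly, so by monotone convergence $S\in L^2(\Omega,\R)$ and in particular $S(\omega)<\infty$ for $\mu$-a.e.\ $\omega$. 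For such $\omega$ the sequence $(f_{n_k}(\omega))_k$ is Cauchy in $X$ and hence converges, by completeness of $X$, to a point $f(\omega)$. This pointwise a.e.\ limit of measurable maps is again measurable, and its range lies in the closure of the union of the (separable) ranges of the $f_{n_k}$, which is separable; thus $f$ is a genuine element of $L^2(\Omega,X)$ once one notes $d(f(\cdot),x)\le S(\cdot)+d(f_{n_1}(\cdot),x)\in L^2$. Finally, dominating $d(f_{n_k}(\omega),f(\omega))$ by the tail of $S$ and applying dominated convergence (or Fatou's lemma) gives $f_{n_k}\to f$ in $L^2$, and a Cauchy sequence with a convergent subsequence converges.

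For the geodesic property, given $f,g\in L^2(\Omega,X)$ I would, for $\mu$-a.e.\ $\omega$, choose a constant-speed geodesic $\gamma^\omega:[0,1]\to X$ from $f(\omega)$ to $g(\omega)$ and set $\sigma(t)(\omega):=\gamma^\omega(t)$. Since each $\gamma^\omega$ has speed $d(f(\omega),g(\omega))$, one has pointwise $d(\gamma^\omega(s),\gamma^\omega(t))=|s-t|\,d(f(\omega),g(\omega))$; in particular $d(\sigma(t)(\omega),f(\omega))\le d(f(\omega),g(\omega))\in L^2$, so each $\sigma(t)$ lies in $L^2(\Omega,X)$. Integrating the pointwise identity yields $d_{L^2}(\sigma(s),\sigma(t))=|s-t|\,d_{L^2}(f,g)$, so after a unit-speed reparametrization $\sigma$ is a geodesic from $f$ to $g$. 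When $X$ is uniquely geodesic, uniqueness in $L^2(\Omega,X)$ is immediate from Theorem~\ref{thm: monodgeodesics}: any geodesic from $f$ to $g$ restricts for a.e.\ $\omega$ to a geodesic of $X$ between $f(\omega)$ and $g(\omega)$, and these are forced to agree, so $\sigma$ is determined uniquely.

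I expect the main obstacle to be purely measure-theoretic, namely the production of a \emph{measurable} family $\{\gamma^\omega\}_\omega$ of geodesics, equivalently the measurability of $\omega\mapsto\sigma(t)(\omega)$ for each fixed $t$. In the uniquely geodesic case this can be obtained from measurable dependence of the (unique) midpoint map together with a dyadic approximation on $t$, but in the general geodesic case it genuinely requires a measurable selection theorem of Kuratowski--Ryll-Nardzewski type, applied to the closed-valued multifunction sending $\omega$ to the set of geodesics from $f(\omega)$ to $g(\omega)$ in the path space. As this point is exactly what is treated in the references of Monod~\cite{Monod} and Korevaar--Schoen~\cite{KS}, we may invoke their results rather than reproduce the selection argument in full.
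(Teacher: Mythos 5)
Your proof is correct, and in fact the paper offers no proof of this lemma at all: it records the statement and refers to Monod \cite{Monod} and Korevaar--Schoen \cite{KS}, so your argument is essentially the standard one found in those cited sources (Riesz--Fischer with a rapidly Cauchy subsequence for completeness; fiberwise geodesic selection, integrated, for the geodesic property). One small point to keep in mind at the selection step: besides measurability of $\omega\mapsto\sigma(t)(\omega)$, you also need $\sigma(t)$ to be \emph{essentially separably valued}, which is not automatic since geodesics between points of a separable set may leave any prescribed separable subset of $X$; this, too, is part of what the constructions in \cite{Monod, KS} take care of, so your deferral of that point to the references is exactly parallel to what the paper itself does.
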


Let $\text{FP}(\Omega)=\bigcup_{n\in \N}\{(A_i)_{i=1}^n\subset \mathcal A^n, \bigsqcup_{i=1}^n A_i=\Omega\}$ be the collection of measurable finite partitions of $\Omega$. 

Let $(\Omega,\mu)$ be a finite measure space and $X$ be a metric space. For a partition $\alpha=(A_1,..,A_n)\in \text{FP}(\Omega)$ and $x=(x_1,...,x_n)\in X^n$, we define the simple function $f_x^\alpha:\Omega \to X$ by $f_x^\alpha(\omega)=x_i$ for $\omega\in A_i$. The map $X^n \to L^2(\Omega,X)$, given by $x\mapsto f_x^\alpha,$ is an isometric embedding after factor-wise rescaling: \[d_{L^2}^2(f_x^\alpha,f_{x^\prime}^\alpha)=\sum_{i=1}^n \mu(A_i)d^2_X(x_i,x_i^\prime).\]
In particular, this map is affine. The image of this map will be denoted as $C(\alpha)$. We now record the density of simple functions in $L^2(\Omega,X)$. \begin{lemma}\label{simple}
The simple functions $\bigcup_{\alpha\in \text{FP}(\Omega)}C(\alpha)$ are dense in $(L^2(\Omega,X),d_{L^2})$.
\end{lemma}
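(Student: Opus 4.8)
The plan is to approximate a given $f\in L^2(\Omega,X)$ in two stages: first by a countably-valued simple function that is \emph{uniformly} close to $f$, and then by a genuinely \emph{finite} simple function obtained via truncation, where the $L^2$-integrability controls the error introduced by discarding infinitely many pieces.

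First I would exploit the separability of the range. Fix $f\in L^2(\Omega,X)$, a representative with separable range $\overline{f(\Omega)}$, a basepoint $x_0\in X$, and $\epsilon>0$. Choose a countable set $\{y_i\}_{i\in\N}$ dense in $\overline{f(\Omega)}$. For $\delta>0$ the open balls $B(y_i,\delta)$ then cover $f(\Omega)$, so setting $B_i:=B(y_i,\delta)\setminus\bigcup_{j<i}B(y_j,\delta)$ and $A_i:=f^{-1}(B_i)$ produces a countable measurable partition $\{A_i\}_{i\in\N}$ of $\Omega$; measurability holds since each $B_i$ is Borel and $f$ is Borel measurable. Define $g_\delta(\omega):=y_i$ for $\omega\in A_i$. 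By construction $d(f(\omega),g_\delta(\omega))<\delta$ for every $\omega$, whence
$$d_{L^2}^2(f,g_\delta)=\int_\Omega d^2(f,g_\delta)\,d\mu\leq \delta^2\,\mu(\Omega),$$
so choosing $\delta$ small enough yields $d_{L^2}(f,g_\delta)<\epsilon/2$.

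Next I would truncate $g_\delta$ to finitely many values. Note first that $g_\delta\in L^2(\Omega,X)$: since $d(g_\delta(\cdot),x_0)\leq \delta+d(f(\cdot),x_0)$, $f\in L^2(\Omega,X)$, and $\mu(\Omega)<\infty$, we obtain
$$\sum_{i\in\N}\mu(A_i)\,d^2(y_i,x_0)=\int_\Omega d^2(g_\delta,x_0)\,d\mu<\infty.$$
For $N\in\N$ let $\alpha_N:=\big(A_1,\dots,A_N,\bigcup_{i>N}A_i\big)\in\text{FP}(\Omega)$ and define the finite simple function $g_{\delta,N}\in C(\alpha_N)$ by $g_{\delta,N}=y_i$ on $A_i$ for $i\leq N$ and $g_{\delta,N}=x_0$ on $\bigcup_{i>N}A_i$. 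Then
$$d_{L^2}^2(g_\delta,g_{\delta,N})=\sum_{i>N}\mu(A_i)\,d^2(y_i,x_0),$$
which is the tail of the convergent series above and hence tends to $0$ as $N\to\infty$. Choosing $N$ large gives $d_{L^2}(g_\delta,g_{\delta,N})<\epsilon/2$, and the triangle inequality yields $d_{L^2}(f,g_{\delta,N})<\epsilon$ with $g_{\delta,N}$ a finite simple function.

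The argument is essentially routine, and the only delicate point is that the uniform approximant $g_\delta$ is genuinely countably valued rather than finite; finiteness is recovered in the truncation step, where the decisive role is played by the $L^2$-hypothesis. It is precisely the finiteness of $\int_\Omega d^2(g_\delta,x_0)\,d\mu$ that forces the discarded tail $\sum_{i>N}\mu(A_i)\,d^2(y_i,x_0)$ to vanish. Beyond carefully handling the Borel measurability of the sets $A_i$, I would expect no substantive obstacle.
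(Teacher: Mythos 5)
Your proof is correct. It shares its two main ingredients with the paper's proof---a disjointified partition $\{A_i\}$ built from a countable dense subset of the separable range, and a truncation to finitely many pieces with the discarded tail sent to a basepoint $x_0$---but the decomposition is organized differently. The paper first splits into a bounded and an unbounded case: for bounded $f$ it controls the tail error by $M^p\,\mu\bigl(\bigcup_{k>N}E_{n,k}\bigr)$, using the uniform bound $M$ together with smallness of the tail's \emph{measure}, and for general $f$ it performs a separate preliminary truncation $f_M$ of the function itself, invoking $L^p$-integrability of $f$ to make $d_{L^p}(f,f_M)$ small. You instead approximate any $f$ in a single pass by the countably-valued $g_\delta$ at uniform distance $\delta$ (which only needs $\mu(\Omega)<\infty$), and then kill the tail by observing that $\sum_i \mu(A_i)\,d^2(y_i,x_0)=\int_\Omega d^2(g_\delta,x_0)\,d\mu<\infty$, so the tail of this convergent series vanishes; here the $L^2$-hypothesis enters once, through the integrability of $g_\delta$ inherited from $f$, rather than twice as in the paper. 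This avoids the case distinction entirely and is, if anything, slightly cleaner; the paper's version is the classical textbook scheme and has the mild advantage that the bounded case isolates where finiteness of the measure does the work. All your steps check out, including the measurability of the $A_i$, the identity $d_{L^2}^2(g_\delta,g_{\delta,N})=\sum_{i>N}\mu(A_i)\,d^2(y_i,x_0)$, and the membership $g_{\delta,N}\in C(\alpha_N)$.
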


This result is standard, but a precise reference for the case of metric space targets is difficult to locate. For completeness, we include a proof.

\begin{proof}[Proof of Lemma~\ref{simple}]
	
First, assume $f \in L^p(\Omega, X)$ is bounded, i.e., there exists $M > 0$ such that $d(f(\omega), x_0) \leq M$ for all $\omega \in \Omega$, where $x_0$ is some fixed element in $X$. Since $f$ has separable range, its image $f(\Omega)$ is a separable subset of $X$. Let $D = \{x_k\}_{k=1}^\infty$ be a countable dense subset of $f(\Omega)$. Since $\mu$ is a finite measure, $\mu(\Omega) < \infty$.

For each $n \in \mathbb{N}$, since $D$ is dense in $f(\Omega)$, for each $\omega \in \Omega$, there exists $x_k \in D$ such that $d(f(\omega), x_k) < \frac{1}{n}$. Define $E_{n,k} = \{\omega \in \Omega : d(f(\omega), x_k) < \frac{1}{n} \text{ and } d(f(\omega), x_j) \geq \frac{1}{n} \text{ for all } j < k \}$. The sets $E_{n,k}$ are measurable, disjoint, and their union is $\Omega$. Let $\epsilon > 0$. Choose $N_n$ such that $\mu(\bigcup_{k=N_n+1}^\infty E_{n,k}) < \frac{\epsilon^p}{2M^p}$. Let $\alpha_n := (E_{n,1}, \dots, E_{n,N_n}, \bigcup_{k=N_n+1}^\infty E_{n,k}) \in \text{FP}(\Omega)$ and $s_n := f_x^{\alpha_n}\in C(\alpha_n)$, where $x = (x_1, \dots, x_{N_n}, x_0)$ with $x_i \in D$. If $\omega \in E_{n,k}$ for some $k \leq N_n$, then $d(f(\omega), s_n(\omega)) < \frac{1}{n}$. If $\omega \in \bigcup_{k=N_n+1}^\infty E_{n,k}$, then $d(f(\omega), s_n(\omega)) = d(f(\omega), x_0)$.
Therefore,
\begin{align*} d_{L^p}^p(f, s_n) &= \int_\Omega d(f(\omega), s_n(\omega))^p d\mu(\omega) \\ &\leq \frac{\mu(\Omega)}{n^p} + \int_{\bigcup_{k=N_n+1}^\infty E_{n,k}} d(f(\omega), x_0)^p d\mu(\omega) \\ &\leq \frac{\mu(\Omega)}{n^p} + M^p \mu(\bigcup_{k=N_n+1}^\infty E_{n,k}) < \frac{\mu(\Omega)}{n^p} + \frac{\epsilon^p}{2} \end{align*}
Choose $n$ large enough so that $\frac{\mu(\Omega)}{n^p} < \frac{\epsilon^p}{2}$. Then $d_{L^p}(f, s_n) < \epsilon$.

Now, let $f \in L^p(\Omega, X)$ be unbounded. For each $M > 0$, define $f_M(\omega) = f(\omega)$ if $d(f(\omega), x_0) \leq M$, and $f_M(\omega) = x_0$ otherwise. Then $f_M$ is bounded. By the previous argument, there exists a simple function $s_M \in C(\alpha)$ for some $\alpha$ such that $d_{L^p}(f_M, s_M) < \frac{\epsilon}{2}$.

Also, $d_{L^p}^p(f, f_M) = \int_{\{\omega: d(f(\omega), x_0) > M\}} d(f(\omega), x_0)^p d\mu(\omega)$. Since $f \in L^p(\Omega, X)$, we can choose $M$ large enough such that $d_{L^p}(f, f_M) < \frac{\epsilon}{2}$.

By the triangle inequality, $d_{L^p}(f, s_M) \leq d_{L^p}(f, f_M) + d_{L^p}(f_M, s_M) < \frac{\epsilon}{2} + \frac{\epsilon}{2} = \epsilon$. Thus, $\bigcup_{\alpha\in \text{FP}(\Omega)}C(\alpha)$ is dense in $(L^p(\Omega,X), d_{L^p})$. \end{proof}

\subsection{Angles in $L^2$-spaces}\label{winkel}

We recall some definitions and results around the concept of angles in metric spaces and prove Theorem~\ref{angles} from the introduction. See Bridson and Haefliger \cite[Chapter I.1]{BH} for more on comparison triangles and angles.

\begin{definition}(Alexandrov angle)
Let $X$ be a metric space and $\gamma_1:[0,a_1]\to X$, $\gamma_2:[0,a_2]\to X$ be geodesics such that $\gamma_1(0)=\gamma_2(0)=:x$. The \textbf{Alexandrov angle} between $\gamma_1$ and $\gamma_2$ is defined as $$\angle(\gamma_1,\gamma_2):=\limsup_{t,t^\prime \to 0}\overline{\angle}_{x}(\gamma_1(t),\gamma_2(t^\prime)),$$ where $\overline{\angle}_{x}(\gamma_1(t),\gamma_2(t^\prime))$ is the comparison angle at $x$ in the comparison triangle $\overline\Delta(x,\gamma_1(t),\gamma_2(t^\prime))\subset \mathbb E^2$.
\end{definition}

If the limit $\lim_{t,t^\prime \to 0}\overline{\angle}_{x}(\gamma_1(t),\gamma_2(t^\prime))$ exists, we say that this angle exists in the strict sense. A geodesic metric space $X$ is said to \textit{have angles} or \textit{angles exist in $X$} if the Alexandrov angle between any pair of geodesic segments issuing from the same point exists in the strict sense.

We record a slightly finer version of the fact that a normed space is Euclidean if and only if it has angles (cf. Bridson and Haefliger \cite[Proposition I.4.5]{BH}).
\begin{lemma}\label{semi-norm_angles}
	Let $(V,|\cdot|)$ be a semi-normed real vector space. The quotient space $X$ by the equivalence relation induced by $|\cdot|$ has angles if and only if $|\cdot|$ is induced by a semi-definite, symmetric bilinear form $h$ on $V$. It is an inner product if and only if $|\cdot|$ is a norm. 
	\end{lemma}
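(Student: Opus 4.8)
The plan is to reduce the statement to the classical normed-space fact cited just above (\cite[Proposition I.4.5]{BH}) via the quotient construction, and then to transfer the conclusion back and forth across the quotient map. First I would make the quotient precise. Writing $N:=\{v\in V:\,|v|=0\}$, homogeneity and the triangle inequality show that $N$ is a linear subspace of $V$, so $X=V/N$ is a vector space and, with $\pi\colon V\to X$ the canonical projection, $\|\pi(v)\|:=|v|$ is well defined and is a genuine norm on $X$; it induces the metric $d(\pi(v),\pi(w))=|v-w|$ that turns $X$ into the geodesic space appearing in the statement. Since straight segments $t\mapsto a+t(b-a)$ are (reparametrized) geodesics of a normed space, the hypothesis that $X$ has angles applies in particular to them, and by \cite[Proposition I.4.5]{BH} the normed space $(X,\|\cdot\|)$ has angles if and only if $\|\cdot\|$ is induced by an inner product $\bar h$ on $X$.

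It then remains to match inner products on $X$ with positive semi-definite symmetric bilinear forms on $V$. In the forward direction, given such an inner product $\bar h$, I would simply pull it back by setting $h:=\bar h\circ(\pi\times\pi)$: this $h$ is symmetric and bilinear (as $\pi$ is linear), positive semi-definite (as $\bar h\geq 0$), and satisfies $h(v,v)=\|\pi(v)\|^2=|v|^2$, so $|\cdot|$ is induced by $h$. Conversely, given a positive semi-definite symmetric bilinear $h$ with $|v|^2=h(v,v)$, I would check that $h$ descends to $X$: the Cauchy--Schwarz inequality $h(v,w)^2\leq h(v,v)\,h(w,w)$, which remains valid for merely semi-definite forms, shows that $h(v,\cdot)\equiv 0$ whenever $|v|=0$, i.e. whenever $v\in N$. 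Hence $\bar h(\pi(v),\pi(w)):=h(v,w)$ is well defined, symmetric and bilinear, and because $\bar h(\pi(v),\pi(v))=|v|^2=\|\pi(v)\|^2$ it vanishes only for $\pi(v)=0$, so $\bar h$ is positive definite, i.e. an inner product inducing $\|\cdot\|$; \cite[Proposition I.4.5]{BH} then yields that $X$ has angles.

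For the final assertion, $h$ is positive definite exactly when $h(v,v)=0$ forces $v=0$, that is, when $|v|=0$ forces $v=0$, which is precisely the statement that $|\cdot|$ is a norm (equivalently, $N=0$ and $\pi$ is injective, so that $h$ and $\bar h$ coincide). The only genuinely non-formal point, and thus the step I would treat most carefully, is the descent of $h$ to $\bar h$: it hinges on Cauchy--Schwarz holding for positive semi-definite (not necessarily definite) forms, which is what guarantees that $N$ is exactly the radical of $h$ and that $h$ annihilates $N$ in both arguments. Everything else is a bookkeeping transfer along $\pi$, with all the analytic content absorbed into the cited normed-space equivalence; if one preferred a self-contained treatment of that equivalence, the mechanism is that the comparison angle of two straight rays from the origin is a degree-zero homogeneous function of the two parameters, so that the strict limit exists iff it is constant along rays, which forces the parallelogram law.
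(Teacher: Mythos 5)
Your proof is correct and takes precisely the route the paper intends: the lemma is stated there without proof, as a ``slightly finer version'' of the cited normed-space fact \cite[Proposition I.4.5]{BH}, and your quotient construction $X=V/N$ together with the Cauchy--Schwarz descent of $h$ (valid for merely semi-definite forms, so that $N$ lies in the radical) is exactly the routine reduction to that result. No gaps.
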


Before proving Lemma~\ref{angles}, we prove the following simpler fact for which we could not find a direct reference.

\begin{lemma}\label{angles_product}
	Let $X_1$ and $X_2$ be a geodesic metric spaces. Then $X_1\times X_2$ has angles if and only if both $X_1$ and $X_2$ have angles. 
\end{lemma}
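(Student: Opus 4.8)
The plan is to prove both directions by relating angles in the product $X_1 \times X_2$ to angles in the factors, using the Pythagorean structure of the product metric $d^2 = d_1^2 + d_2^2$ together with the geodesic characterization in products (a geodesic in $X_1 \times X_2$ is a linear reparametrization of a pair of geodesics in the factors, as recorded via Bridson--Haefliger \cite[I.5.3]{BH}). The central computational object will be the comparison angle: for geodesics issuing from a common point, I would write the comparison angle via the law of cosines and take limits, reducing the existence of strict-sense angles in the product to the existence of strict-sense angles in the factors plus a clean additivity formula for the cosines.

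First I would set up notation: let $\gamma = (\gamma_1, \gamma_2)$ and $\delta = (\delta_1, \delta_2)$ be two unit-speed geodesics in $X_1 \times X_2$ issuing from a common point $x = (x_1, x_2)$, where $\gamma_i, \delta_i$ are (possibly constant/linearly reparametrized) geodesics in $X_i$. Writing $\gamma_i$ with speed $a_i$ and $\delta_i$ with speed $b_i$ (so $a_1^2 + a_2^2 = b_1^2 + b_2^2 = 1$), the comparison angle $\overline{\angle}_x(\gamma(t), \delta(t'))$ is determined by the law of cosines from the three squared distances $d^2(x,\gamma(t)) = t^2$, $d^2(x,\delta(t')) = (t')^2$, and $d^2(\gamma(t),\delta(t'))$. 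The key point is that the last quantity splits: $d^2(\gamma(t),\delta(t')) = d_1^2(\gamma_1(t),\delta_1(t')) + d_2^2(\gamma_2(t),\delta_2(t'))$. Each summand is governed, in the small-scale limit, by the corresponding factor's comparison angle. So I would compute
\begin{equation*}
\cos\overline{\angle}_x(\gamma(t),\delta(t')) = \frac{t^2 + (t')^2 - d^2(\gamma(t),\delta(t'))}{2 t t'},
\end{equation*}
substitute the split, and rearrange the numerator as a weighted combination of the factor numerators $a_i^2 t^2 + b_i^2 (t')^2 - d_i^2(\gamma_i(t),\delta_i(t'))$. Dividing and multiplying appropriately, the cosine of the product angle becomes a convex-type combination of the factor cosines with weights involving $a_i, b_i$; taking $t, t' \to 0$ then shows the product limit exists precisely when both factor limits exist (being careful about the degenerate cases $a_i = 0$ or $b_i = 0$, where the corresponding factor geodesic is constant and contributes trivially).

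For the forward direction (product has angles $\Rightarrow$ each factor has angles) I would argue by restriction: any pair of geodesics $\gamma_1, \delta_1$ in $X_1$ issuing from a common point $x_1$ can be realized inside $X_1 \times X_2$ as $(\gamma_1, \text{const})$ and $(\delta_1, \text{const})$, and for these the product comparison angle equals the factor comparison angle exactly (since the $X_2$-distances vanish). Hence existence of the strict-sense angle in the product immediately yields it in $X_1$, and symmetrically for $X_2$. For the converse I would use the additivity computation above to conclude the product limit exists whenever both factor limits do.

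The main obstacle will be handling the limits cleanly in the degenerate and mixed cases — in particular, when one of the speeds $a_i$ or $b_i$ is zero, the naive cosine-combination formula has a vanishing or indeterminate weight, and one must check that the contribution genuinely drops out rather than producing a false obstruction to the limit. A secondary technical care is ensuring that the small-scale behavior of each $d_i^2(\gamma_i(t),\delta_i(t'))$ is controlled uniformly enough that the weighted combination of cosines converges; this follows because each factor angle existing in the strict sense means each factor numerator divided by $2 a_i b_i t t'$ has a genuine limit, and the weights are fixed constants, so no uniformity issue beyond the individual limits arises. I expect the whole argument to be a careful but elementary law-of-cosines manipulation once the Pythagorean splitting of the product metric and the structure of product geodesics are in hand.
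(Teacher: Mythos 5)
Your proposal is correct and follows essentially the same route as the paper: the forward direction via the isometric embedding $x \mapsto (x,x_0)$ of each factor into the product, and the converse via the law of cosines and the Pythagorean splitting $d^2 = d_1^2 + d_2^2$, expressing the product comparison-angle cosine as the weighted sum $\sum_i a_i b_i \cos\overline{\angle}_{x_i}(\gamma_i(a_i t),\delta_i(b_i t'))$ and passing to the limit. Your explicit attention to the degenerate cases $a_i = 0$ or $b_i = 0$ (where the weight kills the formally indeterminate term) is a point the paper's displayed identity treats implicitly, but it does not change the argument.
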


\begin{proof} First, suppose that $X_1 \times X_2$ has angles. Since $X_1$ can be isometrically embedded into $X_1 \times X_2$ (via the map $x \mapsto (x, x_0)$ for some fixed $x_0 \in X_2$), it follows that if $X_1 \times X_2$ has angles, then $X_1$ must also have angles.  The same argument applies to $X_2$.

Conversely, suppose that $X_1$ and $X_2$ have angles. Let $\gamma, \eta: [0,a] \to X_1 \times X_2$ be geodesics emanating from a common point $(x_1, x_2)$. By Bridson and Haefliger \cite[I.5.3]{BH}, we can write $\gamma(t) = (\gamma_1(\alpha_1^1 t), \gamma_2(\alpha_1^2 t))$ and $\eta(t) = (\eta_1(\alpha_2^1 t), \eta_2(\alpha_2^2 t))$ for geodesics $\gamma_i, \eta_i$ in $X_i$ emanating from $x_i$, where $(\alpha_1^1)^2 + (\alpha_1^2)^2 = (\alpha_2^1)^2 + (\alpha_2^2)^2 = 1$.  Let $t, s \in (0, a]$. Then
\begin{align*}
\cos(\angle_{(x_1,x_2)}(\gamma(t),\eta(s))) &= \frac{t^2+s^2-d^2_{X_1\times X_2}(\gamma(t),\eta(s))}{2ts} \\
&= \sum_{i=1}^2 \alpha_1^i\alpha_2^i \frac{(\alpha_1^i t)^2+(\alpha_2^i s)^2-d^2_{X_i}(\gamma_i(\alpha_1^i t),\eta_i(\alpha_2^i s))}{2(\alpha_1^i t)(\alpha_2^i s)} \\
&= \sum_{i=1}^2 \alpha_1^i\alpha_2^i \cos(\angle_{x_i}(\gamma_i(\alpha_1^i t),\eta_i(\alpha_2^i s))).
\end{align*}
Since $X_i$ have angles, the limits $\lim_{t,s \to 0}\angle_{x_i}(\gamma_i(\alpha_1^i t),\eta_i(\alpha_2^i s))$ exist for $i=1,2$. Therefore, by the continuity of the cosine function, the limit
$
\lim_{t,s \to 0} \cos(\angle_{(x_1,x_2)}(\gamma(t),\eta(s)))$
exists. Since $\arccos$ is continuous on $[0,1]$, we conclude that $\lim_{t,s \to 0} \angle_{(x_1,x_2)}(\gamma(t),\eta(s))$ exists.  Thus, $X_1 \times X_2$ has angles.
\end{proof}

The proof of Theorem~\ref{angles} follows similarly, with the added consideration of interchanging a limit and integration by dominated convergence.
  \begin{proof}[Proof of Lemma~\ref{angles}]
	Let $\sigma_1,\sigma_2:[0,a]\to L^2(\Omega,X)$ be two geodesics issuing from a common point $f\in L^2(\Omega,X)$. By Theorem~\ref{thm: monodgeodesics}, there exist measurable maps $\alpha_i:\Omega \longrightarrow \R_{\geq 0}$ with $\int_{\Omega}\alpha_i(\omega)^2 \,d\mu(\omega)=1$ and collections $\{\sigma_i^\omega\}_{\omega \in \Omega}$ of geodesics $\sigma_i^\omega:[0,\alpha_i(\omega)\cdot a]\to X$, $i=1,2$ such that $\sigma_i(t)(\omega)=\sigma_i^{\omega}(\alpha_i(\omega)t)$ for all $t\in [0,a]$ and $\mu$-almost every $\omega \in \Omega$. By the assumptions, we know that the following limits exist, \[ \lim_{t,s \to 0} \cos(\overline{\angle}_{f(\omega)}(\sigma_1^\omega(t),\sigma_2^\omega(s)))=\lim_{t,s \to 0}\frac{t^2+{s}^2-d_X^2(\sigma_1^\omega(t),\sigma_2^\omega(s))}{2st}.\] 
On the other hand, by the definition of $d_{L^2}$ and $\|\alpha_1\|_{L^2}=\|\alpha_2\|_{L^2}=1$,  
\[
	\cos(\overline{\angle}_{f}(\sigma_1(t),\sigma_2(s)))=\frac{t^2+s^2-d_{L^2}^2(\sigma_1(t),\sigma_2(s))}{2st}=\int_\Omega H(s,t,\omega) \,d\mu(\omega), \] where $H(s,t,\omega):=\alpha_1(\omega)\alpha_2(\omega) \frac{(t\alpha_1(\omega))^2+(s\alpha_2(\omega))^2-d_X^2(\sigma_1^\omega(\alpha_1(\omega)t),\sigma_2^\omega(\alpha_2(\omega)s)}{2t\alpha_1(\omega)\cdot s\alpha_2(\omega)}$
is bounded above by $\alpha_1(\omega)\alpha_2(\omega)$, which is integrable. Thus, by dominated convergence, the limit $\lim_{t,s\to 0}\cos(\overline{\angle}_{f}(\sigma_1(t),\sigma_2(s)))$ exists, and $$\lim_{t,s\to 0}\cos(\overline{\angle}_{f}(\sigma_1(t),\sigma_2(s)))=\int_{\Omega}\alpha_1(\omega)\alpha_2(\omega)\cos(\angle_{f(\omega)}(\sigma^\omega_1,\sigma^\omega_2))\,d\mu(\omega).$$ \end{proof}

\section{Affine maps on certain products}\label{MY}
	
We characterize affine maps on products of Riemannian manifolds of dimension at least two with irreducible universal coverings. 

\begin{theorem}\label{affineprod}
	Let $(M_1, g_1), ..., (M_n,g_n)$ be complete Riemannian manifolds of dimension at least two with irreducible universal covers. For a geodesic metric space $Y$ with angles, every affine map $$f: M_1 \times ... \times M_n \to Y$$ is, up to factor-wise rescaling with some $c_i \geq 0$, an isometry $(M_1, c_1 g_1) \times ... \times (M_n, c_n g_n) \to f(M_1 \times ... \times M_n)$.
\end{theorem}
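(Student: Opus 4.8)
The plan is to show that the affine map $f$ pulls back the metric structure of $Y$ to a \emph{parallel, positive-semidefinite} symmetric $2$-tensor $h$ on $M:=M_1\times\cdots\times M_n$ (with product metric $g=g_1\oplus\cdots\oplus g_n$), and then to invoke the de Rham decomposition together with Schur's lemma to force $h$ to be block-diagonal with constant eigenvalues. First I would fix notation: for $p\in M$ and a unit $v\in T_pM$ let $\gamma_{p,v}$ be the unit-speed geodesic with initial data $(p,v)$, and let $\rho(p,v)\ge 0$ be the reparametrization factor of the linearly reparametrized geodesic $f\circ\gamma_{p,v}$, extended $1$-homogeneously and noting $\rho$ is even in $v$. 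Setting $q_p(v):=\rho(p,v/|v|_g)^2\,|v|_g^2$, I would use that $Y$ \emph{has angles}: the image geodesics $f\circ\gamma_{p,v}$ and $f\circ\gamma_{p,w}$ subtend a genuine Alexandrov angle $\phi(v,w)$ at $f(p)$, and a law-of-cosines computation (together with $d_Y(f(x),f(y))=\rho(\eta)\,d_g(x,y)$ for the minimizing geodesic $\eta$ joining nearby $x,y$) shows that $\sqrt{q_p}$ is a seminorm on $T_pM$ whose comparison angles are exactly the $\phi(v,w)$. Thus the seminormed space $(T_pM,\sqrt{q_p})$ has angles, and Lemma~\ref{semi-norm_angles} yields a positive-semidefinite symmetric bilinear form $h_p$ with $h_p(v,v)=q_p(v)$.

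The key identity I would then establish is that $f$ preserves lengths once the source is measured with $h$: the metric speed of $f\circ c$ at $t$ equals $\sqrt{h_{c(t)}(\dot c(t),\dot c(t))}$ for any rectifiable curve $c$, so $L_Y(f\circ c)=L_h(c)$. Applying this to a short minimizing $g$-geodesic $\gamma$ between nearby $x,y$, the image $f\circ\gamma$ is a minimizing geodesic of $Y$, so $d_Y(f(x),f(y))=L_Y(f\circ\gamma)=L_h(\gamma)$, whereas any other path $c$ gives $d_Y(f(x),f(y))\le L_Y(f\circ c)=L_h(c)$; hence $L_h(\gamma)=d_h(x,y)$ and $\gamma$ is $h$-minimizing. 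Therefore minimizing $g$-geodesics are $h$-geodesics, and since a unit-$g$-speed geodesic has constant $h$-speed $\rho(\gamma)$, its $g$-affine parameter is also affine for $h$. Consequently $g$ and $h$ share their geodesics together with affine parametrizations, forcing $\nabla^{g}=\nabla^{h}$ and hence $\nabla^{g}h=0$; in particular $h$ is smooth and parallel.

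Finally I would write $h=g(S\,\cdot\,,\cdot)$ with $S$ a $g$-self-adjoint parallel endomorphism field, so $S$ has constant eigenvalues and parallel eigendistributions. Under the de Rham decomposition $T_pM=T_{p_1}M_1\oplus\cdots\oplus T_{p_n}M_n$ into the irreducible factors (there is no Euclidean part, since each $\widetilde{M_i}$ is irreducible of dimension $\ge 2$), the restricted holonomy group is the product $\prod_i\mathrm{Hol}(M_i)$, and the summands $T_{p_i}M_i$ are pairwise \emph{non-isomorphic} as representations of this product even when the factors are isometric, because $\mathrm{Hol}(M_i)$ acts irreducibly on $T_{p_i}M_i$ and trivially on the remaining summands. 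Schur's lemma then kills every off-diagonal block of $S$ and forces $S=\bigoplus_i\lambda_i\,\mathrm{Id}_{T_{p_i}M_i}$ with constants $\lambda_i\ge 0$. Putting $c_i:=\sqrt{\lambda_i}$ gives $h=\bigoplus_i c_i^2 g_i$, whence
\[
d_Y^2(f(p),f(q))=d_h^2(p,q)=\sum_{i=1}^n c_i^2\, d_{M_i}^2(p_i,q_i),
\]
so that $f$ is, after rescaling the $i$-th factor by $c_i$, an isometry onto its image, as claimed. (The case $n=1$ is subsumed, giving the stated dilation property of affine maps $M\to Y$.)

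I expect the main obstacle to be the analytic regularity feeding the first two paragraphs: affineness only postulates preservation of geodesics, with no a priori continuity, so I must show that $\rho$ is locally bounded and continuous in $(p,v)$ (equivalently that $f$ is locally Lipschitz) before the limits defining $q_p$, the convergence $\rho(\eta_{t,s})\to\rho(p,\widehat{w-v})$ of connecting-geodesic factors, and the metric-speed identity $L_Y(f\circ c)=L_h(c)$ can be justified. The subsequent identification of $\sqrt{q_p}$ with a bilinear form via Lemma~\ref{semi-norm_angles}, and the de Rham/Schur step, are then essentially routine once this regularity and the parallelism $\nabla^g h=0$ are in place.
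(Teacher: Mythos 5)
Your blueprint matches the paper's in outline (pull back a field of seminorms $|\cdot|^f$, convert it to a bilinear form via Lemma~\ref{semi-norm_angles}, then use holonomy invariance plus Schur), but it has two genuine gaps. The first you flag yourself: affineness postulates nothing about continuity, so the local boundedness and continuity of $\rho$, the first-order expansion $d_Y(f(\gamma_v(t)),f(\gamma_w(s)))=\sqrt{q_p}(tv-sw)+o(t+s)$ on which both the seminorm property of $\sqrt{q_p}$ and the identification of its comparison angles with the Alexandrov angles in $Y$ rest, are all unproven. The paper does not prove these from scratch either: it imports them wholesale from Lytchak \cite[Theorem 1.5]{affineimages}, which says precisely that $|\cdot|^f$ is a \emph{continuous} family of seminorms \emph{invariant under parallel transport}. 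So this gap is fillable by a citation, but as written your argument is missing its analytic engine, and note that the parallel-transport invariance you need later is part of what that citation already provides.

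The second gap is in your derivation of parallelism and does not survive as stated. You argue that $g$-geodesics are $h$-geodesics with the same affine parameter, hence $\nabla^g=\nabla^h$ and $\nabla^g h=0$. But $h$ is only positive semi-definite --- the theorem explicitly allows $c_i=0$, and degenerate $h$ genuinely occurs (compose the projection $M_1\times\cdots\times M_n\to M_1$ with an isometric embedding of $M_1$ into $Y$; then $h=g_1\oplus 0$). A degenerate symmetric $2$-tensor has no Levi-Civita connection, $d_h$ is only a pseudometric, and the principle ``same geodesics with same affine parameters implies same connection'' has no content in that case. Even in the nondegenerate case, that principle presupposes $h$ is smooth (at least $C^2$) before $\nabla^h$ can be formed, whereas you assert smoothness as a \emph{consequence} --- a circularity. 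The paper sidesteps all of this: invariance under parallel transport comes from Lytchak's theorem, and rather than your direct tangent-space argument it reduces, via Berger--Simons, to sections $\Sigma$ meeting all holonomy orbits and to an honestly embedded flat, on which Lemma~\ref{semi-norm_angles} applies exactly rather than only infinitesimally. For what it is worth, your final algebraic step is sound and genuinely slicker than the paper's case analysis: the de Rham summands are pairwise non-isomorphic irreducible representations of the product holonomy group (each $\mathrm{Hol}(M_i)$ acts irreducibly and non-trivially on its own summand --- non-flatness follows from irreducibility in dimension $\geq 2$ --- and trivially on the others), so Schur kills the off-diagonal blocks of $S$ and scalarizes the diagonal ones without any appeal to Berger--Simons; but this shortcut only becomes available once the continuity and holonomy invariance of $h$ on the full tangent space are secured by the regularity input you are missing.
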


	We stress that, by allowing $c_i=0$, we allow for the case in which $(M_i,c_ig_i)$ collapses to a point.

\begin{proof}
	Let $p=(p_1,...,p_n)\in M_1\times ... \times M_n$ and $H_i:=Hol_{p_i}(M_i)$ be the holonomy group of $M_i$ at $p_i$. By the Berger-Simons theorem (cf. Simons, \cite{MR0148010}), either $H_i$ acts transitively on the unit sphere in $T_{p_i}M_i=:V_i$, or $M_i$ is a locally symmetric space of higher-rank. Without loss of generality, assume the first $m$ factors are of the first type and the rest are of the second type. 

For the first type, choose a unit vector $e_i\in V_i$. Then $\Sigma_i:=\langle e_i \rangle$ intersects all $H_i$-orbits, and the stabilizer $W_i:=\text{Stab}_{H_i}(\Sigma_i)$ contains an element which restricted to $\Sigma_i$ is the reflection $e_i \mapsto -e_i$. Indeed, by transitivity, for any other unit vector $v \in V_i$, there exists $h \in H_i$ such that $h(e_i) = v$. Thus the $H_i$-orbit of $e_i$ is the entire unit sphere in $V_i$, and hence $\Sigma_i$ intersects all $H_i$-orbits. Furthermore, since $-e_i$ is also a unit vector, there exists $g \in H_i$ such that $g(e_i) = -e_i$. As $g(\Sigma_i)=\Sigma_i$, $g \in W_i = \text{Stab}_{H_i}(\Sigma_i)$, and $g$ restricted to $\Sigma_i$ is the reflection $e_i \mapsto -e_i$.

For the second type, choose an immersed, maximal flat submanifold $F_i$. Then $\Sigma_i:=T_{p_i}F_i$ intersects all $H_i$-orbits orthogonally, and the stabilizer $W_i:=\text{Stab}_{H_i}(\Sigma_i)$ acts irreducibly on $\Sigma_i$ (cf. Eschenburg, \cite[Sections 9,10]{E}). 

We define $V:=T_p(M_1\times ... \times M_n)\cong V_1\oplus ... \oplus V_n$ and $H:=Hol_p(M_1\times ... \times M_n)\cong H_1\times ... \times H_n$.  Since $ H_1 \times ... \times H_n$ acts component-wise on $V_1\oplus ... \oplus V_n$, $\Sigma:=\Sigma_1\oplus ... \oplus \Sigma_n$ intersects all $H$-orbits.

For $v\in T(M_1\times ...\times M_n)$, define $|v|^f:=\frac{d(f(\gamma_v(t)), f(\gamma_v(t^\prime)))}{|t-t^\prime|}$ for sufficiently small $t,t^\prime$, where $\gamma_v$ is the Riemannian geodesic with $\gamma_v^\prime(0)=v$. This is well-defined precisely because $f$ is affine, and $|v|^f$ is the reparametrization factor of the (locally metric) geodesic $\gamma_v$ under $f$. By Lytchak \cite[Theorem 1.5]{affineimages}, $|\cdot|^f$ is a continuous family of semi-norms invariant under parallel transport. 

To prove our claim, it suffices to show that $|\cdot|^f$ is induced by a metric of the form $c_1g_1+...+c_ng_n$ for some $c_1,...,c_n\geq 0$. 

Since $|\cdot|^f$ is invariant under parallel transport, it suffices to show that its restriction to $V$, denoted $|\cdot|^f|_V$, is induced by $c_1g_1+\cdots+c_ng_n|_{V\times V}$, and we know that $|\cdot|^f|_V$ is invariant under the action of $H$. Consequently, as $\Sigma:=\Sigma_1\oplus \cdots \oplus \Sigma_n$ intersects all $H$-orbits, it suffices to show that $|\cdot|^f|_\Sigma$ is induced by $c_1g_1+\cdots+c_ng_n|_{\Sigma \times \Sigma}$. 

Let $W:=W_1\times ...\times W_n \leq H$.
As a side note, we remark that by Lytchak \cite[4.4]{affineimages}, the restriction $q \mapsto q|_{\Sigma}$ is actually a bijection between $H$-invariant norms on $V$ and $W$-invariant norms on $\Sigma$. 

We define $F := \text{im}(\gamma_{e_1}) \times \cdots \times \text{im}(\gamma_{e_m}) \times F_{m+1} \times \cdots \times F_n,$. This is an immersed flat submanifold of $M_1 \times \cdots \times M_n$ with $T_pF \cong \Sigma$. 
Thus there exists a neighbourhood $U(0)\subset (\Sigma,g_1+...+g_n\vert_{\Sigma \times \Sigma})$ which embeds isometrically into $(M_1\times ... \times M_n, g_1+...+g_n)$. The image of $U(0)$ under this embedding is denoted $N(p)\subset M_1\times ... \times M_n$ and the image $f(N(p))\subset Y$ is isometric to $N(p)\subset (M_1\times ... \times M_n,|\cdot|^f)$. Since $|\cdot|^f$ is invariant under parallel transport, $f(N(p))\subset Y$ is isometric to $U(0)\subset (\Sigma,|\cdot|^f\vert_{\Sigma})$. Thus, by Lemma~\ref{semi-norm_angles}, \( |\cdot|^f|_\Sigma \) is induced by a semi-definite symmetric bilinear form 
\( h : \Sigma \times \Sigma \to \mathbb{R} \).

Since  $|\cdot|^f|_\Sigma$ is invariant under the action of $W$, the bilinear form $h$ is of course also invariant under the same action.  

By representation-theoretic arguments, $W$-invariant inner products on $\Sigma$ are linear combinations of $W_i$-invariant inner products on $\Sigma_i$. This extends to semi-definite symmetric bilinear forms like $h$. Nonetheless, in the following we will  provide an alternative, more direct argument:

Restricting $h$ to $\Sigma_i$, we have $W_i$-invariant semi-definite symmetric bilinear forms $h_i$ on $\Sigma_i$.

 For $i\leq m$, we see directly that $h_i=c_ig_i\vert_{\Sigma_i\times \Sigma_i}$ for some $c_i\geq 0$. 
 
 For $i>m$, we first notice that $\Sigma_i^0:=\{v\in \Sigma_i: |v|^f=0\}\subset \Sigma_i$ is a $W_i$-invariant subspace and therefore, by irreducibility, either $|v|^f\vert_{\Sigma_i}\equiv 0$ or $|v|^f\vert_{\Sigma_i}$ is a norm. In the former case, $h_i=0$. In the latter case, $h_i$ is a scalar product invariant under the irreducible action of $W_i\leq O(\Sigma_i,g_i\vert_{\Sigma_i\times  \Sigma_i})$ and thus by the lemma of Schur, there exists $c_i>0$ such that $h_i=c_ig_i\vert_{\Sigma_i \times \Sigma_i}$. 
 
 Thus for all $1\leq i \leq n$, there exists $c_i\geq 0$ such that $h_i=c_ig_i\vert_{\Sigma_i\times \Sigma_i}$. 
 
 Now it just remains to show that $h=h_1+...+h_n$. This is equivalent to showing that for $i\neq j$, $v\in \Sigma_i$ and  $w\in \Sigma_j$, we have that $h(v,w)=0$. 
 
 If $i\leq m$, recall that $W_i\leq W$ contains an element which restricted to $\Sigma_i$ is the reflection $e_i \mapsto -e_i$. Thus, by the $W$-invariance, we have that $h(v,w)=h(-v,w)=-h(v,w)$ and therefore $h(v,w)=0$. 
 
 If on the other hand, $i,j>m$, we focus on the linear map $f:\Sigma\to \R$ given by $f(u):=h(v,u)$. We observe that ker$(f) \cap \Sigma_j \leq \Sigma_j$ is a $W_j$-invariant subspace of $\Sigma_j$. 
 Indeed to see this, simply note that for $w=(\operatorname{id},...,w_j,...,\operatorname{id})\in W_j\leq W$ and $u\in \text{ker}(f)\cap \Sigma_j$, we have that $f(w \cdot u)=h(v,w \cdot u)=h(w \cdot v, w \cdot u)=h(v,u)=0$. 
 Since $im(f)\neq \{0\}$, $dim(\text{ker}(f))=n-1$ and since also $dim(\Sigma_i)\geq 2$, we deduce that $dim(\text{ker}(f)\cap \Sigma_j)\geq 1$. Since the action of $W_j$ on $\Sigma_j$ is irreducible, we therefore have that $ker(f)\cap \Sigma_j=\Sigma_j$, i.e. $h(v,w)=0$. 
\end{proof}			
			
In other words, we have shown that under the given assumptions, for an affine map $f:M_1\times ... \times M_n \to Y$, there exist constants $c_1,...,c_n\geq 0$ such that for any $(p_1,...,p_n),(q_1,...,q_n)\in M_1\times ... \times M_n$, \[d_Y^2(f(p_1,...,p_n),f(q_1,...,q_n))=\sum_{i=0}^n c_i^2 d_{M_i}^2(p_i,q_i).\]  Note that here $d_{M_i}$ denotes the Riemannian distance on $M_i$.

In particular, this holds for affine maps $f:M^n\to Y$ and the objective of the next part is to extend this to $L^2(\Omega,M)$. 

For $n=1$, we obtain the following special case of the above result. In this situation, the claim also trivially holds for $\dim(M)=1$.

\begin{corollary}
		Let $(M, g)$ be a complete Riemannian manifold with irreducible universal covering. For a geodesic metric space $Y$ with angles, every affine map $f: M \to Y$ is, up to a rescaling by some $c \geq 0$, an isometry $(M,cg) \to f(M)$.
\end{corollary}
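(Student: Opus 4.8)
The plan is to split according to the dimension of $M$. When $\dim(M)\geq 2$, the statement is simply the case $n=1$ of Theorem~\ref{affineprod}: taking $M_1:=M$, any affine map $f:M\to Y$ is, up to rescaling by some $c:=c_1\geq 0$, an isometry $(M,cg)\to f(M)$. Nothing further is required in this range, since the content of the corollary here is entirely contained in the already-established theorem.

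It therefore remains to treat the case $\dim(M)=1$, where the holonomy arguments of Theorem~\ref{affineprod} are unavailable but the conclusion becomes elementary. A complete connected one-dimensional Riemannian manifold is isometric either to $\R$ or to a circle $S^1$ of some circumference $L>0$, and in both cases $M$ is itself (locally) a single geodesic. The key point is that affineness assigns to each geodesic segment a reparametrization factor, and the task reduces to showing that this factor is the same for every segment; once it is a common constant $c$, the map $f$ is a $c$-dilation, i.e. an isometry $(M,cg)\to f(M)$.

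For $M=\R$ this is immediate: the identity is a unit-speed geodesic parametrizing all of $M$, so by affineness $d_Y(f(p),f(q))=c\,|p-q|=c\,d_M(p,q)$ for all $p,q$, with $c$ the single reparametrization factor. For $M=S^1$ I would argue that the reparametrization factor is globally constant by a covering argument: the restriction of a linearly reparametrized geodesic to a subsegment has the same factor, and two geodesic arcs of length $<L$ overlapping in an arc of positive length must share their factor; since every short arc extends to a long one and the long arcs are linked by overlaps, all factors coincide with a single $c\geq 0$. Finally, any two points of $S^1$ are joined by a minimizing geodesic of length $\leq L/2<L$, which is such an arc, so $d_Y(f(p),f(q))=c\,d_M(p,q)$ holds for all $p,q$. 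The only point requiring any care is this global constancy of the factor on $S^1$; it involves no deep input, which is precisely why the one-dimensional case is trivial.
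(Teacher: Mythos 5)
Your proposal is correct and takes essentially the same approach as the paper: the paper also obtains the corollary as the $n=1$ case of Theorem~\ref{affineprod} and merely remarks that the claim ``trivially holds for $\dim(M)=1$'', which your elementary case analysis ($M\cong\R$ or $M\cong S^1$, with constancy of the reparametrization factor via restriction and overlap-chaining of arcs) fills in correctly. One harmless imprecision: metric geodesics in a circle of circumference $L$ have length at most $L/2$ rather than merely $<L$ (an arc of length $\ell>L/2$ is not an isometric embedding), but your chaining argument goes through verbatim with arcs of length $L/2$.
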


\section{Affine maps of $L^2(\Omega,M)$}\label{L2MY} 

Let $(\Omega,\mu)$ be a finite measure space and $X$ be a metric space. For $\eta \in L^\infty(\Omega)$, $\eta \geq 0$, we define a pseudo-metric on $L^2(\Omega,X)$ by 

\[d_{\eta}(f,g):=\left(\int_\Omega \eta\, d^2(f,g) \, d\mu \right)^{\frac{1}{2}}.\] 

We denote the induced quotient metric space by $(L_{\eta}^2(\Omega,X), d_{\eta})$. The projection map $p_\eta: L^2(\Omega,X) \to L_{\eta}^2(\Omega,X)$ is affine. Indeed for a geodesic $\sigma:I \to L^2(\Omega,X)$, $p_\eta \circ \sigma$ is a linearly reparametrized geodesic with reparametrization factor $\left(\int_\Omega \eta(\omega)\alpha^2(\omega)d\mu(\omega) \right)^{\frac{1}{2}}<\infty$, where $\alpha:\Omega \to \R_{\geq 0}$ with $\int_\Omega \alpha^2 \,d\mu=1$ is the measurable map corresponding to $\sigma$ via Theorem~\ref{thm: monodgeodesics}.   
\begin{theorem}\label{affine_classical}
	Let $M$ be a complete Riemannian manifold of dimension at least two with irreducible universal cover, and let $Y$ be a geodesic metric space with angles. A Lipschitz map $F:L^2(\Omega,M)\to Y$ is affine if and only if there exists a nonnegative $\eta\in L^\infty(\Omega)$ such that, for all $f,g\in L^2(\Omega,M)$,
\[d_Y^2(F(f),F(g))=\int_\Omega\eta \,d_M^2(f,g)\, d\mu.\]\end{theorem}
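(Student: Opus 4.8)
The plan is to prove the two implications separately, the forward (``if'') direction being a direct computation and the converse being the substantive one, which I would handle by reducing to the finite product case established in Theorem~\ref{affineprod}.

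For the ``if'' direction, suppose such an $\eta$ exists. The stated identity immediately gives $d_Y(F(f),F(g))\le \|\eta\|_\infty^{1/2}\,d_{L^2}(f,g)$, so $F$ is Lipschitz. To see that $F$ is affine, let $\sigma$ be a geodesic in $L^2(\Omega,M)$ and write $\sigma(t)(\omega)=\sigma^\omega(\alpha(\omega)t)$ as in Theorem~\ref{thm: monodgeodesics}; since each $\sigma^\omega$ is a geodesic, $d_M^2(\sigma(t)(\omega),\sigma(s)(\omega))=\alpha(\omega)^2(t-s)^2$, and substituting into the formula yields $d_Y^2(F(\sigma(t)),F(\sigma(s)))=(t-s)^2\int_\Omega \eta\,\alpha^2\,d\mu$, so $F\circ\sigma$ is a linearly reparametrized geodesic.

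For the converse, assume $F$ is $L$-Lipschitz and affine. For a partition $\alpha=(A_1,\dots,A_n)\in \text{FP}(\Omega)$ the embedding $x\mapsto f_x^\alpha$ is affine, so $x\mapsto F(f_x^\alpha)$ is an affine map $M^n\to Y$, and Theorem~\ref{affineprod} supplies constants $c_i^\alpha\ge 0$ with
\[d_Y^2(F(f_x^\alpha),F(f_{x'}^\alpha))=\sum_{i=1}^n (c_i^\alpha)^2\, d_M^2(x_i,x_i')\qquad\text{for all } x,x'\in M^n.\]
Taking $x,x'$ differing in one coordinate shows the left-hand side determines each $(c_i^\alpha)^2$, so I may set $\nu(A_i):=(c_i^\alpha)^2$. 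The crux is that $\nu$ is well defined and additive. Comparing $\alpha$ with the refinement obtained by splitting a single block $A_n=C\sqcup D$, the functions $f_x^\alpha$ are precisely the functions on the refined partition taking equal values on $C$ and $D$; matching the two distance formulas on these common functions forces the constants on unsplit blocks to be unchanged and $(c_C)^2+(c_D)^2=(c_n^\alpha)^2$ on the split block. Passing to common refinements then shows $\nu(A)$ is independent of the ambient partition and finitely additive.

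It remains to produce $\eta$ and extend the identity everywhere. Applying the $L$-Lipschitz bound to two simple functions agreeing off a set $A$ (i.e.\ the partition $(A,A^c)$) gives $\nu(A)\le L^2\mu(A)$; hence $\nu\ll\mu$, and this domination upgrades finite to countable additivity, so $\nu$ is a finite measure. By Radon--Nikodym, $\eta:=d\nu/d\mu$ exists with $0\le\eta\le L^2$, so $\eta\in L^\infty(\Omega)$. Since $d_M^2(f_x^\alpha,f_{x'}^\alpha)$ equals $d_M^2(x_i,x_i')$ on each $A_i$, the identity $\sum_i\nu(A_i)\,d_M^2(x_i,x_i')=\int_\Omega \eta\, d_M^2(f_x^\alpha,f_{x'}^\alpha)\,d\mu$ establishes the desired formula for all pairs of simple functions (put them over a common refining partition). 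Both sides are continuous in $(f,g)$ for $d_{L^2}$ — the left by Lipschitz continuity of $F$, the right because $d_\eta\le \|\eta\|_\infty^{1/2}d_{L^2}$ — so density of simple functions (Lemma~\ref{simple}) yields the identity on all of $L^2(\Omega,M)$. The main obstacle is exactly the middle step: checking that the per-partition rescaling constants are mutually consistent and assemble into a countably additive measure, where the Lipschitz hypothesis is precisely what delivers both the absolute continuity $\nu\le L^2\mu$ and the resulting countable additivity.
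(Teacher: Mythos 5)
Your proposal is correct and takes essentially the same approach as the paper: reduce to finite products via simple functions and Theorem~\ref{affineprod}, show the per-block constants are partition-independent and assemble them into a finitely additive set function dominated by $\|F\|^2\mu$ (which yields $\sigma$-additivity and, via Radon--Nikodym, $\eta\in L^\infty$), and conclude by common refinements and density of simple functions. The only cosmetic difference is that you verify consistency of the constants through binary splits of a single block, while the paper compares two arbitrary partitions containing the same set directly via two-valued functions.
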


In other words, the theorem asserts that the image of the affine map\,\,\, $F:L^2(\Omega,M)\to Y$ is isometric to $L^2_\eta(\Omega,M)$ for some $\eta \in L^\infty(\Omega)$, $\eta\geq 0$. 

\begin{proof}[Proof of Theorem~\ref{affine_classical}] Assume $F:L^2(\Omega,M)\longrightarrow Y$ is Lipschitz, and that there exists a nonnegative $\eta\in L^\infty(\Omega)$ such that, for all $f,g\in L^2(\Omega,M)$,
\[d_Y^2(F(f),F(g))=\int_\Omega\eta \,d_M^2(f,g)\, d\mu.\]
Let $\sigma: I \longrightarrow L^2(\Omega,M)$ be a geodesic. By Theorem~\ref{thm: monodgeodesics}, there exist a measurable $\alpha:\Omega \longrightarrow \R_{\geq 0}$ with $\int_\Omega \alpha^2 d\mu = 1$, and a family of geodesics $\{\sigma^\omega\}_{\omega \in \Omega}$, $\sigma^\omega:\alpha(\omega)I \longrightarrow X$, such that $\sigma(t)(\omega)=\sigma^\omega(\alpha(\omega)t)$ for almost every $\omega \in \Omega$. Then
\begin{align*}
d_Y(F(\sigma(t)), F(\sigma(t^\prime))) &= \left(\int_\Omega \eta(\omega) d_X^2(\sigma^\omega(\alpha(\omega)t),\sigma^\omega(\alpha(\omega)t^\prime))d\mu(\omega) \right)^{\frac{1}{2}} \\
&= \left(\int_\Omega \eta(\omega) \alpha^2(\omega)d\mu(\omega) \right)^{\frac{1}{2}}|t-t^\prime|.
\end{align*}
Thus $F\circ \sigma$ is a linearly reparameterized geodesic with factor $\left(\int_\Omega \eta(\omega) \alpha^2(\omega)d\mu(\omega) \right)^{\frac{1}{2}}$, and therefore $F$ is affine. 

 For the other direction, let us assume that we are given an affine Lipschitz map $F:L^2(\Omega,M) \longrightarrow Y$.

As noted above in section~\ref{Lp}, for a finite partition $\alpha=(A_1,...,A_n)\in \text{FP}(\Omega)$, the map $M^n \to L^2(\Omega,M)$, given by $x\mapsto f_x^\alpha$, is affine. Thus we obtain an affine map 
$$M^n \xrightarrow{x\mapsto f_x^\alpha} L^2(\Omega,M) \xrightarrow{F} Y.$$ 
Therefore, by Theorem~\ref{affineprod}, there exist $\lambda_1^\alpha,...,\lambda_n^\alpha \geq 0$ such that
\begin{equation} \label{eq:1}
d_Y^2(F(f_x^\alpha),F(f_{x^\prime}^\alpha))=\sum_{i=1}^{n}(\lambda_i^\alpha)^2 d_M^2(x_i,x_i^\prime).
\end{equation}

Now, we fix a set $A \in \mathcal{A}$.  Let $\alpha = (A_1, ..., A_{i-1}, A, A_{i+1}, ..., A_n)$ and $\beta = (B_1, ..., B_{j-1}, A, B_{j+1}, ..., B_m)$ be two partitions containing the same set $A$.  Consider $p, p' \in M$ with $p \neq p'$. Let $x^l := (p, ..., p)\in M^l$ for any $l\in \N$ (all components are $p$) and define $x^l_k := (p, ..., p, p', p, ..., p)\in M^l$, where $p^\prime$ is in the $k$-th position, for all $l\in \N$ and $k\leq l$. 

Then $f_{x^n}^\alpha = f_{x^m}^\beta$ (constant function $p$).  $f_{x^n_i}^\alpha$ is $p^\prime$ on $A$ and $p$ on $\Omega \setminus A$. Similarly, $f_{x_j^m}^\beta$ is $p^\prime$ on $A$ and $p$ on $\Omega \setminus A$. Therefore, $f_{x_i^n}^\alpha = f_{x_j^m}^\beta$. Thus,
$$d_Y(F(f_{x^n}^\alpha), F(f_{x_i^n}^\alpha)) = d_Y(F(f_{x^m}^\beta), F(f_{x^m_j}^\beta)).$$

By equation \eqref{eq:1}, we have:
$$d_Y(F(f_{x^n}^\alpha), F(f_{x^n_i}^\alpha)) = \lambda_i^\alpha d_M(p, p')$$
and
$$d_Y(F(f_{x^m}^\beta), F(f_{x_j^m}^\beta)) = \lambda_j^\beta d_M(p, p').$$

Thus, $\lambda_i^\alpha = \lambda_j^\beta$. This shows that the coefficient associated with the set $A$ depends only on $A$ and not on the specific partition containing $A$ or its position within that partition.
Hence, we can define $\lambda^A := \lambda_i^{\alpha}$ for some partition $\alpha$ containing $A$ (where $A$ is the $i$-th element of $\alpha$).  We then define $\widetilde{\mu}:\mathcal A \to \R_{\geq 0}$ by $\widetilde{\mu}(A):={(\lambda^A)}^2$.

 By the aforementioned independence of the $\lambda_i$, we therefore have
\begin{align}\label{discrete}
	\begin{split}
		d_Y^2(F(f_x^\alpha),F(f_{x^\prime}^\alpha))=\sum_{i=1}^{n}\widetilde{\mu}(A_i)d_M^2(x_i,x_i^\prime).
	\end{split}
\end{align}
We note that $\widetilde{\mu}(\emptyset)=0$. Let $\|F\|\geq 0$ be the Lipschitz constant of $F$. Since for $A\in \mathcal A$ and distinct $p,p^\prime \in M$, \begin{align*}
	\begin{split}
		\widetilde{\mu}(A) d_M^2(p,p^\prime)&=d_Y^2(F(f_{(p,p)}^{(A,A^c)}),F(f_{(p^\prime,p)}^{(A,A^c)})\leq \Vert F \Vert^2 d_{L^2}(f_{(p,p)}^{(A,A^c)},f_{(p^\prime,p)}^{(A,A^c)}) \\&= \Vert F \Vert^2\mu(A)d_M^2(p,p^\prime),
	\end{split}
\end{align*}
we also established that for all $A\in \mathcal A$,
\begin{align}\label{upperbound}
	\begin{split}
		\widetilde{\mu}(A)\leq \Vert F \Vert^2 \mu(A).
	\end{split}
\end{align}
Next notice that for disjoint $A,B \in \mathcal A$, and distinct $p,p^\prime \in M$, \begin{align*}
	\begin{split}
		\widetilde\mu(A\cup B)d_M^2(p,p^\prime)&=d_Y^2(F(f_{(p,p)}^{(A\cup B,A^c\cap B^c)}),F(f_{(p^\prime,p)}^{(A\cup B,A^c\cap B^c)}))\\&=d_Y^2(F(f_{(p,p,p)}^{(A,B,A^c\cap B^c)}),F(f_{(p^\prime,p^\prime,p)}^{(A,B,A^c\cap B^c)}))\\&=\widetilde\mu(A)d_M^2(p,p^\prime)+\widetilde\mu(B)d_M^2(p,p^\prime),
	\end{split}
\end{align*}
and so we deduce that $\widetilde\mu(A\cup B)=\widetilde\mu(A)+\widetilde\mu(B)$. By induction, $\widetilde{\mu}$ is finitely additive.  Now for disjoint $(A_i)_{i\in\mathbb{N}}$, finite additivity yields  $\widetilde{\mu}(\bigsqcup_{i=1}^\infty A_i) = \sum_{i=1}^n \widetilde{\mu}(A_i) + \widetilde{\mu}(\bigsqcup_{i=n+1}^\infty A_i)$. By (\ref{upperbound}) and the continuity of $\mu$, $\widetilde{\mu}(\bigsqcup_{i=n+1}^\infty A_i) \leq \|F\|^2 \mu(\bigsqcup_{i=n+1}^\infty A_i) \to 0$ as $n \to \infty$. Thus $\widetilde{\mu}$ is $\sigma$-additive, and since it is also non-negative and $\widetilde{\mu}(\emptyset) = 0$, it is a measure.

Also by the upper bound, $\widetilde\mu \ll \mu$. Thus by Radon-Nikodym, there exists $\eta \in L^1(\Omega)$ such that for all $A \in \mathcal A$, $\widetilde\mu(A)=\int_A \eta(\omega) \,d\mu(\omega)$. Again by (\ref{upperbound}), we deduce that $\Vert \eta \Vert_\infty \leq \Vert F \Vert^2$, and so $\eta \in L^\infty(\Omega)$ with $\eta \geq 0$.

Note, by (\ref{discrete}), for any $\alpha \in \text{FP}(\Omega)$ and $f, g \in C(\alpha)$, we have:
\begin{align*}
d_Y^2(F(f), F(g)) &= \int_\Omega d_M^2(f(\omega), g(\omega)) \tilde{d\mu}(\omega) \\
&= \int_\Omega \eta(\omega) d_M^2(f(\omega), g(\omega)) d\mu(\omega) = d_\eta^2(f, g).
\end{align*}
This establishes the equality $d_Y(F(f), F(g)) = d_\eta(f, g)$ for $f, g \in C(\alpha)$.

Now let $f, g \in \bigcup_{\alpha \in \text{FP}(\Omega)} C(\alpha)$.  This means there exist partitions $\alpha_1, \alpha_2 \in \text{FP}(\Omega)$ such that $f \in C(\alpha_1)$ and $g \in C(\alpha_2)$. Let $\alpha$ be the common refinement of $\alpha_1$ and $\alpha_2$. Then both $f$ and $g$ are in $C(\alpha)$.  This is because if $\alpha$ is a refinement of $\alpha_1$, then $C(\alpha_1) \subseteq C(\alpha)$. Thus, in fact, we already have $d_Y(F(f), F(g)) = d_\eta(f, g)$ for $f, g \in \bigcup_{\alpha \in \text{FP}(\Omega)} C(\alpha)$.

Therefore finally, by the density of $\bigcup_{\alpha \in \text{FP}(\Omega)} C(\alpha)$ in $L^2(\Omega,X)$ (see Lemma~\ref{simple}), the claim follows for all $f,g\in L^2(\Omega,X)$. 
 \end{proof}

\section{Main argument}
\subsection{Splittings of $L^2(\Omega,M)$}\label{split}
We analyse splittings of $L^2$-spaces of the above type and prove Theorem~\ref{L2factors1}.
\begin{corollary}\label{L2factors}
	Let $M$ be a Riemannian manifold of dimension at least two with irreducible universal cover, and let $L^2(\Omega,M) = Y \times \overline{Y}$. Then there exists a measurable $A \subseteq \Omega$ such that, for all $f,g \in L^2(\Omega,M)$, 

\[d_Y(P^Y(f),P^Y(g)) = \int_A d_M^2(f(\omega),g(\omega))\,d\mu(\omega)\]
and 
\[d_{\overline{Y}}^2(P^{\overline{Y}}(f),P^{\overline{Y}}(g)) = \int_{A^c} d_M^2(f(\omega),g(\omega))\,d\mu(\omega).\]
\end{corollary}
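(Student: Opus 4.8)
The plan is to realise the two coordinate projections of the splitting as affine Lipschitz maps and to apply Theorem~\ref{affine_classical} to each. Write $\Phi = (P^Y, P^{\overline Y}): L^2(\Omega,M) \to Y \times \overline Y$ for the canonical isometry furnished by the decomposition, so that $\Phi$ is a surjective isometry whose coordinate components are $P^Y$ and $P^{\overline Y}$. Projections onto the factors of a metric product are affine and $1$-Lipschitz. Moreover, $L^2(\Omega,M)$ has angles: by Lemma~\ref{angles} angles exist in $L^2(\Omega,M)$ because they exist in the Riemannian manifold $M$, so by Lemma~\ref{angles_product} both geodesic factors $Y$ and $\overline Y$ have angles as well. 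Theorem~\ref{affine_classical} therefore applies to $P^Y$ and $P^{\overline Y}$ and produces nonnegative $\eta_Y, \eta_{\overline Y} \in L^\infty(\Omega)$ with
$$d_Y^2(P^Y f, P^Y g) = \int_\Omega \eta_Y\, d_M^2(f,g)\,d\mu$$
for all $f,g$, and the analogous identity for $\overline Y$ with a function $\eta_{\overline Y}$. The goal is to show that $\eta_Y = \mathbf 1_A$ and $\eta_{\overline Y} = \mathbf 1_{A^c}$ for a measurable $A \subseteq \Omega$.

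Two constraints will pin down $\eta_Y$ and $\eta_{\overline Y}$. The first is a pointwise normalisation: since $\Phi$ is an isometry onto the metric product, $d_{L^2}^2(f,g) = d_Y^2(P^Y f, P^Y g) + d_{\overline Y}^2(P^{\overline Y} f, P^{\overline Y} g)$, and as $d_{L^2}^2(f,g) = \int_\Omega d_M^2(f,g)\,d\mu$ this reads $\int_\Omega (\eta_Y + \eta_{\overline Y} - 1)\,d_M^2(f,g)\,d\mu = 0$ for all $f,g$. Testing this identity on the simple functions $f = f_{(p,p)}^{(B,B^c)}$, $g = f_{(p',p)}^{(B,B^c)}$ with $p \neq p'$, where $B \in \mathcal A$ ranges over all measurable sets, forces $\int_B (\eta_Y + \eta_{\overline Y})\, d\mu = \mu(B)$ for every $B$, i.e. $\eta_Y + \eta_{\overline Y} = 1$ almost everywhere.

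The crux is the second constraint, that $\eta_Y$ and $\eta_{\overline Y}$ have essentially disjoint supports; this is where the genuine product structure must be exploited, as opposed to the mere Pythagorean splitting of distances, which alone would permit spurious solutions such as $\eta_Y = \eta_{\overline Y} = \tfrac12$. This is the main obstacle. For arbitrary $f,g$, I would consider the mixed point $h := \Phi^{-1}(P^Y g, P^{\overline Y} f)$, which exists because $\Phi$ is bijective. Then $P^Y h = P^Y g$ and $P^{\overline Y} h = P^{\overline Y} f$, so the two formulas give $\int_\Omega \eta_Y\, d_M^2(h,g)\,d\mu = 0$ and $\int_\Omega \eta_{\overline Y}\, d_M^2(h,f)\,d\mu = 0$; by nonnegativity, $h = g$ a.e. on $S_Y := \{\eta_Y > 0\}$ and $h = f$ a.e. on $S_{\overline Y} := \{\eta_{\overline Y} > 0\}$. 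Taking $f \equiv p$ and $g \equiv p'$ constant with $p \neq p'$ would then force $p = p'$ a.e. on $S_Y \cap S_{\overline Y}$, whence $\mu(S_Y \cap S_{\overline Y}) = 0$, that is, $\eta_Y\,\eta_{\overline Y} = 0$ almost everywhere.

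Combining the two constraints finishes the argument: where $\eta_Y > 0$ we have $\eta_{\overline Y} = 0$ and hence $\eta_Y = 1$; symmetrically for $\eta_{\overline Y}$; and the set where both vanish is null, since there the sum $\eta_Y + \eta_{\overline Y}$ would be $0 \neq 1$. Thus, setting $A := \{\eta_Y = 1\}$ (defined up to a null set), we obtain $\eta_Y = \mathbf 1_A$ and $\eta_{\overline Y} = \mathbf 1_{A^c}$, and substituting these into the two formulas yields the claimed identities with integrals over $A$ and $A^c$.
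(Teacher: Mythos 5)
Your proof is correct and takes essentially the same route as the paper: angles in $Y$ and $\overline Y$ via Lemmas~\ref{angles} and~\ref{angles_product}, Theorem~\ref{affine_classical} applied to the affine Lipschitz projections, the Pythagorean identity tested on simple functions to get $\eta_Y+\eta_{\overline Y}=1$ a.e., and the surjectivity of the product decomposition to force disjoint supports. Your mixed point $h=\Phi^{-1}(P^Y g,P^{\overline Y} f)$ with constant $f,g$ is precisely the paper's use of the surjectivity of $(p_\eta,p_{\overline\eta})$ to exclude a positive-measure set where both densities lie in $(0,1)$, merely phrased as a direct derivation of $\eta_Y\,\eta_{\overline Y}=0$ instead of a contradiction.
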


Thus we  show that $Y \to L^2(A,M) \text{, }P^Y(f) \mapsto f\vert_A$ and $\overline{Y} \to L^2(A^c,M)\text{, }P^{\overline{Y}}(f) \mapsto f\vert_{A^c},$ $f\in L^2(\Omega,M)$ are well-defined isometric embeddings. Since clearly every function in $L^2(A,M)$ is a restriction $f|_A$ of some $f$ in $L^2(\Omega,M)$, the maps are surjective and thus isometries.

\begin{proof}[Proof of Corollary~\ref{L2factors}]

Note that in Riemannian manifolds angles exist in the sense of section~\ref{winkel} (see for example Burago-Burago-Ivanov \cite{Burago}). Accordingly, by Lemma~\ref{angles}, since $M$ is a Riemannian manifold, $L^2(\Omega,M)=Y\times \overline Y$ admits angles. Thus by Lemma~\ref{angles_product}, angles exist in both $Y$ and $\overline{Y}$.

Therefore we may apply Theorem~\ref{affine_classical} to the affine projections onto the factors $Y$ and $\overline Y$ respectively, $P^Y:L^2(\Omega,M) \to Y$ and $P^{\overline Y}:L^2(\Omega,M) \to \overline Y$.  Thus there are functions $\eta, \overline \eta \in L^\infty(\Omega)$, $\eta, \overline \eta \geq 0$ such that for all $f,g\in L^2(\Omega,M)$, $d_Y(P^Y(f),P^Y(g))=d_\eta(f,g)$ and $d_{\overline Y}(P^{\overline Y}(f),P^{\overline Y}(g))=d_{\overline\eta}(f,g)$. Hence, we obtain a splitting \begin{align}\label{spiltting} L^2(\Omega,M)\xrightarrow{(p_\eta, p_{\overline \eta})}L^2_\eta(\Omega,M)\times L^2_{\overline\eta}(\Omega,M).\end{align} To establish our claim, all we need to show is that $\eta=\chi_A$ and $\overline \eta =\chi_{A^c}$ for some measurable $A\subset \Omega$.

This follows from the fact that (\ref{spiltting}) is a splitting:
we begin by observing that, for all $f,g\in L^2(\Omega,M)$, we know that 
\begin{align*}
	\int_\Omega d_M^2(f(\omega),g(\omega))\,d\mu(\omega)
&=d_{L^2}^2(f,g)=d_{\eta}^2(f,g)+d_{\overline{\eta}}^2(f,g)  \\ &= \int_\Omega (\eta(\omega)+\overline{\eta}(\omega))d_M^2(f(\omega),g(\omega))\,d\mu(\omega).
\end{align*}
Now we proceed in two steps. 
First, we show that for $\mu$-a.e. $\omega \in \Omega$, $\eta(\omega)+\overline\eta(\omega)=1$.
To that end, set $A:=(\eta +\overline\eta -1)^{-1}((-\infty,0))$ and choose distinct $p,q\in M$. By the above, we know that \begin{align*} 0=&\int_{\Omega} (\eta(\omega)+\overline\eta(\omega)-1)d_M^2(f_{(p,p)}^{(A,A^c)}(\omega),f_{(p,q)}^{(A,A^c)}(\omega))\,d\mu(\omega) \\=&\int_{A^c} (\eta(\omega)+\overline\eta(\omega)-1)d_M^2(p,q)\,d\mu(\omega). \end{align*}
Therefore, we deduce that $\eta(\omega)+\overline\eta(\omega)=1$ for $\mu$-a.e. $\omega\in A^c$. Analogously, we show that $\eta(\omega)+\overline\eta(\omega)=1$ for $\mu$-a.e. $\omega\in A$ and thus we have $\eta(\omega)+\overline\eta(\omega)=1$ for $\mu$-a.e. $\omega \in \Omega$. 

Secondly, we claim that in fact $\eta(\omega), \overline{\eta}(\omega)\in \{0,1\}$ for $\mu$-a.e. $\omega\in \Omega$. Indeed otherwise, there would exist a subset $A\subset \Omega$ with $\mu(A)>0$ and $\eta(\omega), \overline{\eta}(\omega) \in (0,1)$ for all $\omega\in A$. By the surjectivity of (\ref{spiltting}), for distinct $p,q\in M$, there would then exist $h\in L^2(\Omega,M)$ such that $(p_\eta(h),p_{\overline{\eta}}(h))=(p_\eta(f_{(p,p)}^{(A,A^c)}),p_{\overline{\eta}}(f_{(q,q)}^{(A,A^c)}))$. Thus $\int_\Omega \eta \,d_M^2(h,p)\,d\mu =0$ and therefore $h\vert_A\equiv p$, and likewise $h\vert_A\equiv q$. This is a contradiction and so indeed $\eta(\omega), \overline{\eta}(\omega)\in \{0,1\}$ for $\mu$-a.e. $\omega\in \Omega$. Thus taken together, there exists a measurable $A\subset \Omega$ such that $\eta=\chi_A$ and $\overline \eta =\chi_{A^c}$.

 This completes the proof.
\end{proof}

\begin{proof}[Proof of Theorem~\ref{L2factors1}] Since the standard probability space is atomless, by Theorem~\ref{Rokhlin}, we can assume that $\Omega=[0,1]$ equipped with the Lebesgue measure $\lambda$. 

Let $L^2(\Omega,M)=Y\times \overline Y$ be a nontrivial splitting. By Corollary~\ref{L2factors}, there exists a measurable $A\subset \Omega$ of positive measure such that $Y$ is isometric to $L^2(A,M)$. By normalizing the induced measure space on $A\subset \Omega$, we obtain another atomless standard probability space $(A,\frac{1}{\lambda(A)}\lambda)$ (cf. Bogachev \cite[Proposition 9.4.10]{MR2267655}). By Theorem~\ref{Rokhlin}, this space is again isomorphic to $([0,1],\lambda)$. 

Therefore, $Y\cong L^2(A,M)$ is isometric to $\lambda(A)L^2([0,1],M)$. 
\end{proof}
\subsection{Isometric localization and rigidity}\label{locrig} We show that the isometries of $L^2(\Omega,M)$ are localizable and prove Theorem~\ref{isometriesM}. In Remark~\ref{R} we describe how to obtain isometries of $L^2(\Omega,\R)$ violating the rigidity in the sense of Theorem~\ref{isometriesM}.

For the following result, recall the definition of an almost everywhere bijective map $\varphi:\Omega \to \Omega$ from section \ref{measure}.
\begin{lemma}\label{Psi}
	 Let $(\Omega,\mu)$ be a standard probability space and $M, N$ be complete Riemannian manifolds of dimension at least two with irreducible universal coverings. For an isometry $\gamma: L^2(\Omega,M) \to L^2(\Omega,N)$ there exists an almost everywhere bijective $\varphi:\Omega \to \Omega$ such that $\varphi_\ast \mu \ll \mu$ and $\mu \ll \varphi_\ast \mu$, and for every measurable $A\subset \Omega$ and $f,g\in L^2(\Omega,M)$,
$$\int_A d_M^2(f,g) \,d\mu = \int_{\varphi^{-1}(A)} d_N^2(\gamma(f),\gamma(g)) \,d\mu.$$
\end{lemma}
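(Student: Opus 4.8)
The plan is to attach to the isometry $\gamma$ a transformation $\Psi$ of the measure algebra of $\Omega$, to prove that $\Psi$ is a measure-class-preserving Boolean $\sigma$-automorphism, and finally to realize it by an almost everywhere bijective point map $\varphi$ with $\Psi(A)=\varphi^{-1}(A)$.

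\emph{Construction of $\Psi$ and the localization identity.} Fix a measurable $A\subset\Omega$. The canonical isometry $L^2(\Omega,M)=L^2(A,M)\times L^2(A^c,M)$ is a metric product, and pushing this decomposition forward along $\gamma$ yields a metric product decomposition $L^2(\Omega,N)=Y_A\times\overline{Y}_A$ in which $P^{Y_A}\circ\gamma$ corresponds to the projection $L^2(\Omega,M)\to L^2(A,M)$, $f\mapsto f|_A$; in particular $d_{Y_A}^2(P^{Y_A}(\gamma f),P^{Y_A}(\gamma g))=\int_A d_M^2(f,g)\,d\mu$. Applying Corollary~\ref{L2factors} to this decomposition of $L^2(\Omega,N)$ (legitimate, since $N$ satisfies the same hypotheses as $M$) produces a measurable $\Psi(A)\subset\Omega$ with $d_{Y_A}^2(P^{Y_A}(h),P^{Y_A}(k))=\int_{\Psi(A)} d_N^2(h,k)\,d\mu$ for all $h,k\in L^2(\Omega,N)$. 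Taking $h=\gamma f$, $k=\gamma g$ and equating gives the localization identity
\begin{equation*}
\int_A d_M^2(f,g)\,d\mu=\int_{\Psi(A)} d_N^2(\gamma f,\gamma g)\,d\mu \qquad (f,g\in L^2(\Omega,M)). \tag{$\ast$}
\end{equation*}

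\emph{$\Psi$ is a Boolean homomorphism.} First I would observe that $(\ast)$ pins down $\Psi(A)$ uniquely modulo null sets: since $\gamma$ is onto, $\gamma f,\gamma g$ range over all of $L^2(\Omega,N)$, and for distinct $p,q\in N$ the simple functions equal to $q$ on a prescribed $E$ and to $p$ elsewhere realize $d_N^2(\gamma f,\gamma g)=d_N(p,q)^2\chi_E$; hence $(\ast)$ determines $\mu(\Psi(A)\cap E)$ for every $E$, and thus $\Psi(A)$ up to null sets. Running the construction for a finite partition $\Omega=\bigsqcup_{i=1}^k A_i$ and iterating Corollary~\ref{L2factors} on the resulting $k$-fold product yields a genuine partition $\Omega=\bigsqcup_i \Psi(A_i)$; by uniqueness this is compatible with the two-set construction, so $\Psi(\emptyset)=\emptyset$, $\Psi(\Omega)=\Omega$, $\Psi(A^c)=\Psi(A)^c$ and $\Psi(A\sqcup B)=\Psi(A)\sqcup\Psi(B)$ for disjoint $A,B$, all modulo null sets. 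Applying the whole construction to $\gamma^{-1}$ gives a map $\Psi'$ obeying the analogue of $(\ast)$, and uniqueness forces $\Psi'\circ\Psi=\mathrm{id}$ and $\Psi\circ\Psi'=\mathrm{id}$ mod null.

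\emph{$\sigma$-additivity and the measure class.} The crucial computation is to feed $(\ast)$ a pair of \emph{constant} functions: with $p\ne q\in N$ and $r:=d_N(p,q)$, the constant maps $h\equiv p$, $k\equiv q$ lie in $L^2(\Omega,N)$ with $d_N^2(h,k)\equiv r^2$, so setting $\psi:=d_M^2(\gamma^{-1}h,\gamma^{-1}k)\in L^1(\Omega)$ and using $(\ast)$ gives $\mu(\Psi(A))=r^{-2}\int_A\psi\,d\mu$. Thus $\nu(A):=\mu(\Psi(A))$ is a finite measure with $L^1$-density $r^{-2}\psi$ with respect to $\mu$, whence $\nu\ll\mu$; in particular $\mu(A)=0$ implies $\mu(\Psi(A))=0$. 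Countable additivity of $\Psi$ then follows cleanly: for disjoint $(A_i)$ with union $A$ the finite-additivity step gives pairwise disjoint $\Psi(A_i)$ with $\bigsqcup_i\Psi(A_i)\subseteq\Psi(A)$, while $\mu(\Psi(A))=\nu(A)=\sum_i\nu(A_i)=\mu\bigl(\bigsqcup_i\Psi(A_i)\bigr)$ forces equality mod null. The same argument for $\Psi'$ shows $\mu(S)=0\Rightarrow\mu(\Psi'(S))=0$; combined with $\Psi'\circ\Psi=\mathrm{id}$ this gives $\mu(\Psi(A))=0\Rightarrow\mu(A)=0$, i.e. $\mu\ll\nu$. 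Hence $\Psi$ is a Boolean $\sigma$-automorphism of the measure algebra of $\Omega$ preserving the null ideal in both directions, with $\nu\sim\mu$.

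\emph{Point realization.} Finally, $\Omega$ being a standard probability space (Theorem~\ref{Rokhlin}), I would invoke the classical point-realization theorem: every measure-class-preserving $\sigma$-automorphism of the measure algebra of a Lebesgue--Rokhlin space is induced by an almost everywhere bijective $\varphi:\Omega\to\Omega$ with $\Psi(A)=\varphi^{-1}(A)$ mod null. Substituting this into $(\ast)$ yields the asserted identity, and $\varphi_\ast\mu(A)=\mu(\varphi^{-1}(A))=\mu(\Psi(A))=\nu(A)$ together with $\nu\sim\mu$ gives $\varphi_\ast\mu\ll\mu$ and $\mu\ll\varphi_\ast\mu$. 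I expect the main obstacles to be (i) the careful transfer of the metric-product structure through $\gamma$, with the bookkeeping that identifies $P^{Y_A}\circ\gamma$ with restriction to $A$, and (ii) the point-realization step, where one must match the measure-algebra formalism to the paper's notion of an almost everywhere bijective map and verify the measure-class hypotheses precisely.
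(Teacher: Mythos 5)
Your proposal is correct, and its first half coincides with the paper's: the paper also constructs $\Psi(A)$ by pushing the canonical splitting $L^2(\Omega,M)=L^2(A,M)\times L^2(A^c,M)$ through $\gamma$, applying Corollary~\ref{L2factors} to the resulting splitting of $L^2(\Omega,N)$, and using $\gamma^{-1}$ to produce an inverse $\Psi'$ with $\Psi'\circ\Psi=\Psi\circ\Psi'=\mathrm{id}$ modulo null sets. Where you genuinely diverge is in the second half, and your route is slicker in two spots. First, your constant-function trick — plugging $h\equiv p$, $k\equiv q$ into $(\ast)$ to exhibit $\nu(A):=\mu(\Psi(A))$ as $r^{-2}\int_A\psi\,d\mu$ with $\psi=d_M^2(\gamma^{-1}h,\gamma^{-1}k)\in L^1(\Omega)$ — delivers $\sigma$-additivity and $\nu\ll\mu$ in one stroke, where the paper verifies countable additivity of $\Psi$ directly from the integral identity and gets two-sided absolute continuity from bijectivity of $\Psi$ and $\Psi(\emptyset)=\emptyset$; the content is equivalent. (Your detour through iterated applications of Corollary~\ref{L2factors} to $k$-fold products is dispensable: disjoint additivity already follows from your own uniqueness argument with $d_N^2(h,k)=r^2\chi_E$.) Second, and more substantially, the point-realization step: you invoke the classical von Neumann--Rokhlin realization theorem for $\sigma$-automorphisms of the measure algebra of a standard probability space, whereas the paper builds $\varphi$ by hand — it passes to the Rokhlin normal form $[0,1]\sqcup\N$ via Theorem~\ref{Rokhlin}, shows $\Psi$ permutes the atoms and preserves the atomless part, constructs from $\Psi$ and the density $\rho$ a linear isometry $T$ of $L^1(\Omega)$, and applies Banach's description of $L^1$-isometries (with a separate extreme-point argument when there are finitely many atoms) to extract $u$ and an a.e.\ bijective $\varphi$ with $T(f)=u\cdot(f\circ\varphi)$, finally checking $\varphi(\Psi(A))=A$. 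Your appeal to the abstract theorem buys brevity at the cost of a deeper black box; the paper's construction is longer but explicit and self-contained modulo Banach's classical theorem, and in both proofs standardness of $\Omega$ enters exactly at this realization step. One wrinkle you should smooth when writing this up: the realization theorem in its usual form (e.g.\ Bogachev, Theorem 9.5.2) concerns measure-\emph{preserving} isomorphisms of measure algebras of Lebesgue spaces, so you should apply it to $\Psi$ viewed as a measure-algebra isomorphism from $(\Omega,\nu)$ — after normalization still a standard probability space, since $\nu\sim\mu$ — to $(\Omega,\mu)$, for which it is measure-preserving by construction; this yields precisely an a.e.\ bijective $\varphi$ with $\varphi_\ast\mu=\nu\sim\mu$ and $\Psi(A)=\varphi^{-1}(A)$, as required.
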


\begin{proof}

\begin{enumerate}[(i)]
\item 

We begin by pointing out that for any measurable $A\subset \Omega$, $$\int_\Omega d_M^2(f,g)\,d\mu=\int_Ad_M^2(f\vert_A,g\vert_A)\,d\mu+\int_{A^c}d_M^2(f\vert_{A^c},g\vert_{A^c})\,d\mu,$$ and thus $L^2(\Omega,M) \longrightarrow L^2(A,M)\times L^2(A^c,M)$, $f\mapsto (f \vert_A, f\vert_{A^c})$ is a canonical splitting of $L^2(\Omega,M)$. 
By combining this with $\gamma:L^2(\Omega,M) \to L^2(\Omega,N)$, we obtain a splitting of $L^2(\Omega,N)$:
$$L^2(\Omega,N) \xrightarrow[]{\gamma^{-1}} L^2(\Omega,M) \xrightarrow[]{f \mapsto (f\vert_A, f\vert_{A^c})} L^2(A,M)\times L^2(A^c,M).$$

By Corollary~\ref{L2factors}, there exists a measurable $\Psi(A) \subset \Omega$ such that $L^2(A,M) \to L^2(\Psi(A),N)$, $\gamma^{-1}(f)\vert_A \mapsto f\vert_{\Psi(A)}$ is an isometry. Thus for all $f,g \in L^2(\Omega,M)$,  $$\int_A d_M^2(f,g) \,d\mu=\int_{\Psi(A)} d_N^2(\gamma(f),\gamma(g))\,d\mu.$$

This uniquely defines $\Psi:\mathfrak{A} \longrightarrow \mathfrak{A}$, where $\mathfrak A$ denotes the quotient of the $\sigma$-algebra $\mathcal A$ of $\Omega$ by an identification up to null-sets.  

By considering the isometry $\gamma^{-1}:L^2(\Omega,N) \to L^2(\Omega,M)$, we analogously obtain $\Psi^\prime:\mathfrak{A} \longrightarrow \mathfrak{A}$ and $\Psi \circ \Psi^{\prime}=\Psi^{\prime} \circ \Psi =\operatorname{id}_\mathfrak{A}$. Thus $\Psi^{\prime}$ is an inverse and $\Psi$ is bijective.

Furthermore  $\Psi(\emptyset)=\emptyset$. Indeed, we know that for all $f, g\in L^2(\Omega,M)$,
$$\int_{\Psi(\emptyset)} d_N^2(\gamma(f),\gamma(g)) \,d\mu=\int_\emptyset d_M^2(f,g) \,d\mu = 0.$$

If $\mu(\Psi(\emptyset)) > 0$, we could choose $f,g$ such that $d_N^2(\gamma(f), \gamma(g)) > 0$ on $\Psi(\emptyset)$, leading to a contradiction after integration. Therefore, we must have $\mu(\Psi(\emptyset)) = 0$, and thus $\Psi(\emptyset) = \emptyset$ in the quotient algebra $\mathfrak{A}$.

Moreover, we know that for pairwise disjoint $(A_n)_{n \in \mathbb{N}} \subset \mathcal A$,
\begin{align*}
	\Psi\left(\bigsqcup_{n=1}^\infty A_n \right)=\bigsqcup_{n=1}^\infty \Psi(A_n).
\end{align*}
To see this, let $(A_n)_{n \in \mathbb{N}}$ be pairwise disjoint measurable subsets of $\Omega$ and $A := \bigsqcup_{n=1}^\infty A_n$.  We want to show that $\Psi(A) = \bigsqcup_{n=1}^\infty \Psi(A_n)$. First, let $f, g \in L^2(\Omega, M)$.  Then we know that
$\int_A d_M^2(f,g) \,d\mu = \sum_{n=1}^\infty \int_{A_n} d_M^2(f,g) \,d\mu$,
and by the definition of $\Psi$, we have that
$\int_{A_n} d_M^2(f,g) \,d\mu = \int_{\Psi(A_n)} d_N^2(\gamma(f),\gamma(g)) \,d\mu$.

Therefore,
\begin{align*}
	\int_A d_M^2(f,g) \,d\mu &= \sum_{n=1}^\infty \int_{\Psi(A_n)} d_N^2(\gamma(f),\gamma(g)) \,d\mu \\&= \int_{\bigsqcup_{n=1}^\infty \Psi(A_n)} d_N^2(\gamma(f),\gamma(g)) \,d\mu
\end{align*}

We also know that $\int_A d_M^2(f,g) \,d\mu = \int_{\Psi(A)} d_N^2(\gamma(f),\gamma(g)) \,d\mu$.

Comparing these two expressions, we get:
$$\int_{\Psi(A)} d_N^2(\gamma(f),\gamma(g)) \,d\mu = \int_{\bigsqcup_{n=1}^\infty \Psi(A_n)} d_N^2(\gamma(f),\gamma(g)) \,d\mu$$
Since this holds for all $f, g \in L^2(\Omega, M)$ and $\Psi$ is bijective, it implies that in the quotient algebra $\mathfrak{A}$, we have $\Psi(A) = \bigsqcup_{n=1}^\infty \Psi(A_n)$.

Applying the inverse to our result, we obtain \begin{align}\label{union}
	\Psi^{-1}\left(\bigsqcup_{i=1}^\infty A_i \right)=\bigsqcup_{i=1}^\infty \Psi^{-1}(A_i).
\end{align}
Thus $\widetilde\mu: \mathcal{A} \longrightarrow \R_{\geq 0}$, $A\mapsto \mu(\Psi^{-1}(A))$ defines a measure and since $\Psi$ is bijective and $\Psi(\emptyset)=\emptyset$, we have both $\widetilde\mu \ll \mu$ and $\mu \ll \widetilde\mu$. To see this, simply note that if $\mu(A)=0$ for some measurable $A\in \Omega$, then $A=\emptyset$ in $\mathfrak A$ and therefore $\Psi^{-1}(A)=\emptyset$ as well in $\mathfrak A$. Thus $\widetilde \mu (A)=\mu(\Psi^{-1}(A))=0$. The other claim follows analogously. 

The remainder of the proof relies on recovering a \textit{pointwise}, almost everywhere bijective $\varphi:\Omega \to \Omega$ which is such that $\varphi(\Psi(A))=A$ (up to null-sets) for all $A\in \mathcal A$. This finally establishes the claim since we then have both $\varphi_\ast \mu \ll \mu$, $\mu \ll \varphi_\ast \mu$, and of course by the above, $$\int_A d_M^2(f,g) \,d\mu = \int_{\varphi^{-1}(A)} d_N^2(\gamma(f),\gamma(g)) \,d\mu.$$
 
\item By Theorem~\ref{Rokhlin}, we can assume that $\Omega=[0,1]\sqcup \N$, equipped with the measure $c\lambda+\sum_{n=1}^{\infty}p_n\delta_n$, where $c\geq 0$, $p_n\geq 0$ for all $n\in \N$, and of course $c+\sum_{n=1}^\infty p_n=1$. Further, we can assume that if $p_i=0$, then $p_j=0$ for all $j\geq i$.

The bijection $\Psi$ respects the partition of $\Omega$ into an atomic and an atomless part: $\Psi$ permutes the atoms and thus preserves the set of Lebesgue measurable subsets, $\mathcal L([0,1])$, and the power set $\mathcal P(\N)$.
Indeed, otherwise, there would exist $\{i\}$ such that $\Psi(\{i\})$ is not an atom up to null-sets, and so it can be partitioned into two disjoint sets $A,B\in \mathcal A$ of positive measure. But then since $\Psi$ is a bijection, $\Psi^{-1}(A)$ and $\Psi^{-1}(B)$ are also disjoint sets of positive measure such that, by (\ref{union}), $\Psi^{-1}(A) \cup \Psi^{-1}(B)$ is a single point up to null-sets. This is a contradiction. Since this argument also holds for the inverse, $\Psi$ permutes the atoms. 
\item Next we recover the almost everywhere bijective $\varphi:\Omega \to \Omega$: by Radon-Nikodym, there exists $\rho \in L^1(\Omega)$ such that for all $A \in \mathcal{A}$, $\widetilde\mu(A)=\int_A \rho(\omega) \,d\mu(\omega)$ and $\rho>0$ $\mu$-almost everywhere.

We define $T:L^1(\Omega)\to L^1(\Omega)$ on simple functions by $T\left(\sum_i a_i\chi_{A_i}\right):=\rho\sum_i a_i\chi_{\Psi(A_i)}$, where $A_1,...,A_n\in \mathcal A$ are pairwise disjoint and $a_1,...,a_n \in \R$. This map is an isometry and it extends to a linear isometry on $L^1(\Omega)$ by density and a common refinement argument. 

Since $\Psi$ preserves $\mathcal L([0,1])$ and $\mathcal P(\N)$, $T$ decomposes into isometries $T_1:L^1([0,1],\lambda)\to L^1([0,1],\lambda)$ and $T_2:L^1(\N,\sum_{n\in \mathbb N} p_n\delta_n)\to L^1(\N,\sum_{n\in \mathbb N} p_n\delta_n)$. 

For the first isometry, we know that by Banach \cite[p.178]{Banach}, $T_1$ is given by $T_1(f)(\omega)=u_1(\omega)f(\varphi_1(\omega))$ for some measurable $u_1:[0,1]\to\mathbb{R}$ and a.e. bijective $\varphi_1:[0,1]\to[0,1]$.

For the second isometry $T_2$, let $J = \{n \in \mathbb{N} : p_n > 0\}$. Consider the isometry $\sigma: L^1(\mathbb{N}, \sum_{n \in \mathbb{N}} p_n \delta_n) \to L^1(J, \sum_{n \in J} \delta_n)$, defined  by $f \mapsto (p_n f(n))_{n \in J}$. This induces an isometry $\sigma \circ T_2 \circ \sigma^{-1}: L^1(J, \sum_{n \in J} \delta_n) \to L^1(J, \sum_{n \in J} \delta_n)$.
Under the assumptions stated at the beginning of part (ii), we have two cases. Either $J = \mathbb{N}$, in which case $\sigma \circ T_2 \circ \sigma^{-1}$ is an isometry of $L^1(\mathbb{N},\sum_{n\in \N}\delta_n)\cong\ell^1(\N)$. Therefore, again by Banach \cite[p. 178]{Banach}, $T_2$ must have the form $T_2(f)(\omega) = u_2(\omega) f(\varphi_2(\omega))$ for some permutation $\varphi_2: \mathbb{N} \to \mathbb{N}$ and a function $u_2: \mathbb{N} \to \R$.
Alternatively, $J = \{1, \dots, m\}$ for some $m \in \mathbb{N}$, and $\sigma \circ T_2 \circ \sigma^{-1}$ is an isometry of $L^1(\{1, \dots, m\},\sum_{n=1}^m \delta_n)$, which is isometric to $(\mathbb{R}^m, \|\cdot\|_1)$. The extreme points of the unit sphere in $(\mathbb{R}^m, \|\cdot\|_1)$ are precisely $\pm e_i$ for $i = 1, \dots, m$, where $e_i$ denotes the standard basis vector. Since linear isometries preserve extreme points, any linear isometry of $(\mathbb{R}^m, \|\cdot\|_1)$ must, up to a sign change for each coordinate, permute the coordinates. Therefore, also in this case, there exists a permutation $\varphi_2: \mathbb{N} \to \mathbb{N}$ and a function $u_2: \mathbb{N} \to \R$ such that $T_2(f)(\omega) = u_2(\omega) f(\varphi_2(\omega))$.

Combining $u_1$, $u_2$, $\varphi_1$, and $\varphi_2$, we obtain a measurable $u:\Omega\to\mathbb{R}$ and an a.e. bijective $\varphi:\Omega\to\Omega$ such that $T(f)(\omega)=u(\omega)f(\varphi(\omega))$ for all $f\in L^1(\Omega)$ and a.e. $\omega\in\Omega$.
\item Finally, we show that $\varphi(\Psi(A)) = A$, closing the argument. To do so, first note that $u > 0$ almost everywhere. For all measurable $A\subset \Omega$, we now know that $\rho\chi_{\Psi(A)}=T(\chi_A)=u\chi_A\circ \varphi$ and thus clearly $u\geq 0$ almost everywhere. Now assume there exists a set $A$ of positive measure on which $u$ vanishes. This would imply that for a.e. $\omega \in \Omega$, 
$\rho(\omega)\chi_{A}(\omega)=T(\chi_{\Psi^{-1}(A)})(\omega)=u(\omega)\chi_{\Psi^{-1}(A)}(\varphi(\omega)) = 0.$
Hence, $\rho$ would also vanish on $A$, contradicting the assumption that $\rho$ is strictly positive almost everywhere.
Therefore, for a.e. $\omega \in \Omega$, $\rho(\omega)\chi_{\Psi(A)}(\omega) = u(\omega)\chi_A(\varphi(\omega))$ and both $\rho, u > 0$. This forces $\varphi(\Psi(A)) = A$ up to null-sets. \end{enumerate} \end{proof}

\begin{proof}[Proof of Theorem~\ref{isometriesM}]
It is immediate that maps $\gamma:L^2(\Omega,M) \longrightarrow L^2(\Omega,N)$ of the given form are isometries.

Conversely, suppose $\gamma:L^2(\Omega,M) \longrightarrow L^2(\Omega,N)$ is an isometry. 
By Lemma~\ref{Psi}, there exists an almost everywhere bijective $\varphi:\Omega \to \Omega$ such that $\varphi_\ast \mu \ll \mu$, $\mu \ll \varphi_\ast \mu$, and for every  $A\in \mathcal A$ and $f,g\in L^2(\Omega,M)$, $$\int_A d_M^2(f,g) \,d\mu = \int_{\varphi^{-1}(A)} d_N^2(\gamma(f),\gamma(g)) \,d\mu.$$

By Radon-Nikodym, there exists a positive Radon-Nikodym derivative $\frac{d(\varphi_\ast\mu)}{d\mu}>0$. Thus by the change of variable formula for every $A\in \mathcal A$ and $f,g\in L^2(\Omega,M)$,
\begin{align*}
	\int_A d_M^2(f,g)\,d\mu 
	&= \int_{\varphi^{-1}(A)} d_N^2(\gamma(f),\gamma(g)) \,d\mu\\
	&= \int_{A} d_N^2(\gamma(f)\circ\varphi^{-1},\gamma(g)\circ\varphi^{-1})\,d(\varphi_\ast\mu) \\
	&= \int_A d_N^2(\gamma(f)\circ\varphi^{-1},\gamma(g)\circ\varphi^{-1})\,\frac{d(\varphi_\ast\mu)}{d\mu}\,d\mu.
\end{align*}
Thus, for $\mu$-a.e. $\omega\in\Omega$, 
\begin{equation}\label{eqn_dilation}
	d_M^2(f(\omega),g(\omega)) = \frac{d(\varphi_\ast\mu)}{d\mu}(\omega)\cdot d_N^2(\gamma(f)(\varphi^{-1}(\omega)),\gamma(g)(\varphi^{-1}(\omega))).
\end{equation}

 Our strategy now is twofold. First, we recover a family of dilations $\rho:\Omega \to \text{Dil}(M,N)$ such that, $\gamma(f)(\omega)=\rho(\varphi(\omega))(f(\varphi(\omega))$ for $\mu$-a.e. $\omega\in \Omega$, for all $f\in L^2(\Omega,M)$; secondly we exploit the fact that $M$ is a Riemannian manifold to show that in fact $\rho \in L^2(\Omega,\text{Isom}(M,N))$. 
 
To start with the former, define $\rho(\omega)(x):=\gamma(f^\Omega_x)(\varphi^{-1}(\omega))$. Then we have for all $x,y\in M$, \begin{equation}
	d_M^2(x,y) = \frac{d(\varphi_\ast\mu)}{d\mu}(\omega)\cdot d_N^2(\rho(\omega)(x),\rho(\omega)(y)).
\end{equation}

Since for $\mu$-a.e. $\omega\in \Omega$, $\frac{d(\varphi_\ast\mu)}{d\mu}(\omega)>0$, $\rho(\omega)$ is a dilation by the factor $\left(\sqrt{\frac{d(\varphi_\ast\mu)}{d\mu}(\omega)}\right)^{-1}>0$, and we obtain the family $\rho:\Omega \to \text{Dil}(M,N)$ up to null-sets. 

Furthermore observe that since (\ref{eqn_dilation}) holds $\mu$-a.e., the expression $\gamma(f)(\varphi^{-1}(\omega))$ only depends on the value $f(\omega)\in M$. Thus indeed $\gamma(f)(\omega)=\rho(\varphi(\omega))(f(\varphi(\omega)))$ for $\mu$-a.e. $\omega\in \Omega$.

For the second part, we start by recording that $\omega \mapsto \rho(\omega)(x)=\gamma(f^\Omega_x)(\varphi^{-1}(\omega))$ lies in $L^2(\Omega,N)$.

Next we note that there analogously exists a family of dilations $\rho^\prime:\Omega \to \text{Dil}(N,M)$ and an almost everywhere bijective $\varphi^\prime:\Omega \to \Omega$ such that, $\gamma^{-1}(f)(\omega)=\rho^\prime(\varphi^\prime(\omega))(f(\varphi^\prime(\omega)))$ for $\mu$-a.e. $\omega \in \Omega$ and all $f\in L^2(\Omega,N)$. But then, for $\mu$-a.e. $\omega \in \Omega$ we have that $\rho(\omega)\circ \rho^\prime(\varphi^\prime(\omega))=\operatorname{id} \vert_ N$. In other words, for $\mu$-a.e. $\omega \in \Omega$, $\rho(\omega)\in\text{Dil}(M,N)$ has a right inverse and is thus not just injective but bijective. 

Finally, to complete the proof we show that $\varphi$ is measure preserving, i.e. $\varphi\in \text{Aut}(\Omega)$ and therefore $\rho(\omega)\in \text{Isom}(M,N)$ for a.e. $\omega \in \Omega$ and thus in particular, $M$ is isometric to $N$.

To do so it suffices to show that $\frac{d(\varphi_\ast\mu)}{d\mu}$ is constant $\mu$-almost everywhere. Indeed in that case, if we let $c:=\frac{d(\varphi_\ast\mu)}{d\mu}$ and recall that $\varphi$ is almost everywhere bijective, we already know that $1=\mu(\Omega)=(\varphi_\ast\mu)(\Omega)=c\mu(\Omega)=c$, and therefore $\varphi \in \text{Aut}(\Omega)$.

To show that $\frac{d(\varphi_\ast\mu)}{d\mu}$ is constant $\mu$-almost everywhere we note that otherwise, there exist $\omega,\omega'\in \Omega$ with non-zero $\frac{d(\varphi_\ast\mu)}{d\mu}(\omega)\neq \frac{d(\varphi_\ast\mu)}{d\mu}(\omega')$, and such that $\rho(\omega),\rho(\omega')\in\text{Dil}(M,N)$ are bijective and well-defined. Therefore, we obtain a surjective dilation $\rho(\omega)^{-1}\circ\rho(\omega')$ with dilating factor $\lambda\neq 1$. But since $M$ is not Euclidean such dilations do not exist.

 Indeed $M$ admits a surjective dilation if and only if $M$ is Euclidean. Let $\alpha:M\to M$ be a surjective dilation with factor $\lambda\neq 1$. By taking the inverse if necessary we can assume that $\lambda<1$. Thus by Banach's fix point theorem, there exists $x\in M$ such that $\alpha(x)=x$. Now choose a sufficiently small compact ball $B\subset M$ around $x$. There exists a point $y\in B$ and some plane $\sigma \leq T_yM$ at which the sectional curvature $\kappa=\kappa(\sigma)$ is maximal in $B$. However, we also have sectional curvature $\frac{1}{\lambda^2}\kappa$ at $\alpha(y)\in B$ and $\frac{1}{\lambda^2}>1$. This implies that the sectional curvature vanishes throughout $B$. Thus we can isometrically embed $B$ into $\mathbb{E}^{\dim(M)}$. 
 
 Since $\alpha^{-1}$ scales $B$ by the factor $\lambda^{-1}>1$, we therefore have balls of arbitrary diameter around $x$ in $M$ which are Euclidean. Thus $M$ is itself Euclidean. 
This completes the proof.
\end{proof}

As a consequence, we obtain the following strengthened localization. 

\begin{corollary}
	 Let $M, N$ be complete Riemannian manifolds of dimension at least two with irreducible universal covers. For any isometry $\gamma: L^2(\Omega, M) \to L^2(\Omega, N)$ and measurable $A \subset \Omega$, there exists a measurable $B \subset \Omega$ with $\mu(A) = \mu(B)$ such that for all $f,g \in L^2(\Omega, M)$, 
$$\int_A d_M^2(f,g) \, d\mu = \int_B d_N^2(\gamma(f), \gamma(g)) \, d\mu.$$\end{corollary}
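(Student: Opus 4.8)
The plan is to read the result straight off the structural description of isometries in Theorem~\ref{isometriesM}, since $\Omega$ is a standard probability space throughout this section. First I would apply that theorem to obtain $\varphi\in\text{Aut}(\Omega)$ and $\rho\in L^2(\Omega,\text{Isom}(M,N))$ with $\gamma(f)(\omega)=\rho(\varphi(\omega))(f(\varphi(\omega)))$ for $\mu$-a.e.\ $\omega$ and every $f\in L^2(\Omega,M)$. The natural candidate is then $B:=\varphi^{-1}(A)$; since $\varphi$ is an automorphism, hence measure-preserving, one immediately gets $\mu(B)=\mu(\varphi^{-1}(A))=\mu(A)$, which settles the measure condition.

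It remains to verify the integral identity for this choice of $B$. The key pointwise observation is that for a.e.\ $\omega$ the map $\rho(\varphi(\omega))$ is an isometry $M\to N$, so
\[ d_N^2(\gamma(f)(\omega),\gamma(g)(\omega)) = d_N^2\bigl(\rho(\varphi(\omega))(f(\varphi(\omega))),\rho(\varphi(\omega))(g(\varphi(\omega)))\bigr) = d_M^2(f(\varphi(\omega)),g(\varphi(\omega))). \]
Integrating over $B=\varphi^{-1}(A)$ and using that $\varphi$ is measure-preserving, so that $\int_{\varphi^{-1}(A)}(h\circ\varphi)\,d\mu=\int_A h\,d\mu$ for any integrable $h$ (applied to $h=d_M^2(f,g)$), yields $\int_B d_N^2(\gamma(f),\gamma(g))\,d\mu=\int_A d_M^2(f,g)\,d\mu$, as required.

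I do not expect a genuine obstacle: the statement is an immediate consequence of Theorem~\ref{isometriesM}. In fact one can bypass the computation above by invoking the localization identity already produced in Lemma~\ref{Psi}, namely $\int_A d_M^2(f,g)\,d\mu=\int_{\varphi^{-1}(A)}d_N^2(\gamma(f),\gamma(g))\,d\mu$, and then simply noting that the very map $\varphi$ appearing there is shown in the proof of Theorem~\ref{isometriesM} to be measure-preserving. The only point requiring care is conceptual rather than technical: in Lemma~\ref{Psi} the map $\varphi$ is merely almost-everywhere bijective with $\varphi_\ast\mu\ll\mu$ and $\mu\ll\varphi_\ast\mu$, and such a map need not preserve measure; it is precisely the upgrade $\varphi\in\text{Aut}(\Omega)$ established in Theorem~\ref{isometriesM} (equivalently, the constancy of $\frac{d(\varphi_\ast\mu)}{d\mu}$, forced by the non-Euclidean geometry of $M$) that promotes the earlier localization into this \emph{measure-preserving} form and guarantees $\mu(A)=\mu(B)$.
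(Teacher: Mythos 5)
Your proposal is correct and matches the paper's intended argument: the corollary is stated there as an immediate consequence of Theorem~\ref{isometriesM}, with $B=\varphi^{-1}(A)$ for the measure-preserving $\varphi\in\operatorname{Aut}(\Omega)$ produced by that theorem, combined with the localization identity from Lemma~\ref{Psi}. Your closing remark correctly identifies the one essential point, namely that the upgrade from the merely nonsingular $\varphi$ of Lemma~\ref{Psi} to $\varphi\in\operatorname{Aut}(\Omega)$ (via constancy of $\frac{d(\varphi_\ast\mu)}{d\mu}$) is exactly what yields $\mu(A)=\mu(B)$.
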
 

\begin{remark}\label{R} Let $\Omega=[0,1]$.
	Consider the Hilbert space $L^2(\Omega)=L^2(\Omega,\R)$ with its standard inner product. The group of linear isometries of $L^2(\Omega)$ operates transitively on the unit sphere around $0\in L^2(\Omega)$. Now let $e:= \sqrt{2}\chi_{[0,1/2]}$ and $e^\prime:=\chi_{[0,1]}$. Evidently $e,e^\prime \in L^2(\Omega)$ lie on the unit sphere and thus there exists a linear isometry $T$ of $L^2(\Omega)$ such that $T(e)=e^\prime$.  
	
However, $T$ cannot be a rigid isometry in the sense of Theorem~\ref{isometriesM}. If it were, then for any measurable set $A \subseteq \Omega$, there would exist a set $B$ of equal measure such that for all $f \in L^2(\Omega)$,
$$\int_A |T(f)|^2 \,d\lambda= \int_B |f|^2\, d\lambda.$$

Applying this to $f=e$ and the set $A = [0, 1/2]$, we obtain a contradiction: 
$$1 = \int_A |e|^2 \, d\lambda= \int_{B} |T(e)|^2 \, d\lambda =\lambda(B)=\frac{1}{2}.$$
\end{remark}

\subsection{Final arguments}\label{final}
We provide proofs of Theorems~\ref{semi-main} and \ref{MN}. 
\begin{proof}[Proof of Theorem~\ref{semi-main}]
	
First, we show that $L^2(\Omega,\operatorname{Isom}(M))$ is normal in $\operatorname{Isom}(L^2(\Omega,M))$. For $\rho\in L^2(\Omega,\operatorname{Isom}(M))$, let $\gamma_\rho$ denote the isometry $f\mapsto \gamma_\rho(f)$, where $\gamma_\rho(f)(\omega) = \rho(\omega)(f(\omega))$. For $\varphi \in \text{Aut}(\Omega)$ on the other hand, let $\gamma^\varphi$ denote the isometry $f\mapsto f\circ \varphi$. Let $\tau\in L^2(\Omega,\operatorname{Isom}(M))$ and $\gamma\in\operatorname{Isom}(L^2(\Omega,M))$. By Theorem~\ref{isometriesM}, $\gamma$ is of the form $\gamma=\gamma^\varphi \, \gamma_\rho$ for some $\rho\in L^2(\Omega,\operatorname{Isom}(M))$ and $\varphi\in\operatorname{Aut}(\Omega)$. Clearly, $\gamma^{-1}=\gamma_{\rho^{-1}}\,\gamma^{\varphi^{-1}}$ and therefore 
$\gamma \gamma_\tau \gamma^{-1} = \gamma^\varphi \gamma_\rho \gamma_\tau \gamma_{\rho^{-1}} \gamma^{\varphi^{-1}} =\gamma_{\sigma},$ where $\sigma\in L^2(\Omega,\text{Isom}(M))$ is defined by $\sigma(\omega):=\rho(\varphi(\omega))\circ \tau(\varphi(\omega)) \circ \rho(\varphi(\omega))^{-1}$. Hence, $L^2(\Omega,\operatorname{Isom}(M))$ is normal.

By Theorem~\ref{isometriesM}, any isometry $\gamma\in\operatorname{Isom}(L^2(\Omega,M))$ can be written as $\gamma=\gamma^\varphi\circ\gamma_\rho=\gamma_{\rho\circ \varphi}\circ\gamma^\varphi$ for some $\rho\in L^2(\Omega,\operatorname{Isom}(M))$ and $\varphi\in\operatorname{Aut}(\Omega)$. Thus, 
$\operatorname{Isom}(L^2(\Omega,M))=\operatorname{Aut}(\Omega) L^2(\Omega,\operatorname{Isom}(M))=L^2(\Omega,\operatorname{Isom}(M))\cdot\operatorname{Aut}(\Omega).$
	
	Finally, we show $L^2(\Omega,\operatorname{Isom}(M))\cap\operatorname{Aut}(\Omega)=\{\operatorname{id}\}$. Indeed, if $\gamma\in L^2(\Omega,\operatorname{Isom}(M))\cap\operatorname{Aut}(\Omega)$, then $\gamma=\gamma_\rho=\gamma^\varphi$ for some $\rho\in L^2(\Omega,\operatorname{Isom}(M))$ and $\varphi\in\operatorname{Aut}(\Omega)$. By considering the action of this isometry on constant functions, we see that $\rho=\operatorname{id}_M$ and hence $\gamma=\operatorname{id}$.

Therefore, $\operatorname{Isom}(L^2(\Omega,M))\cong L^2(\Omega,\operatorname{Isom}(M))\rtimes\operatorname{Aut}(\Omega)$.
\end{proof}

\begin{proof}[Proof of Theorem~\ref{MN}]
	
For $\dim(M), \dim(N)\geq 2$, the claim follows directly from Theorem~\ref{isometriesM}.

The remaining spaces, $L^2(\Omega, S^1)$ and $L^2(\Omega, \mathbb{R})$, are not isometric due to their differing diameters: finite for the former, infinite for the latter.

Since $M$ isometrically embeds into $L^2(\Omega, M)$, if $L^2(\Omega, M) \cong L^2(\Omega, \mathbb{R})$, then $M$ would isometrically embed into a Hilbert space, implying $M \cong \mathbb{E}^n$. Thus, $L^2(\Omega, \mathbb{R})$ is not isometric to $L^2(\Omega, M)$ for any $M$ of dimension at least two with irreducible universal covering. 

Finally we also show that $L^2(\Omega, S^1)$ is not isometric to $L^2(\Omega, M)$ for the same $M$ as above. To that end, we first observe that for $f\in L^2(\Omega,S^1)$, there exists $\phi\in L^2(\Omega,\R)$ such that $f(\omega)=e^{i\phi(\omega)}$ for all $\omega\in\Omega$. The set $C=\{\alpha\in L^2(\Omega,\R): |\alpha|\leq\pi/2\}$ is convex and $\psi: C\to L^2(\Omega,S^1)$, given by $\alpha\mapsto e^{i(\phi+\alpha)}$, is an isometric embedding. 

We claim that for any geodesic $\sigma:[0,d] \to L^2(\Omega,S^1)$ starting at $f$, the first half of its image lies in $\psi(C)$. By Theorem~\ref{thm: monodgeodesics}, $\sigma(t)(\omega)=e^{i(\varphi(\omega)+t\alpha(\omega) )}$ for some $ \alpha\in L^2(\Omega,\R)$ with $\| \alpha\|_{L^2}=1$. Since $\alpha(\omega)d\leq \pi$ for all $\omega$, the claim follows.

Thus, for $f \in L^2(\Omega, S^1)$ and geodesics $\sigma_i : [0, d_i] \to L^2(\Omega, S^1)$ issuing from $f$, $i = 1, 2$, the convex hull of the geodesic triangle with vertices $f, \sigma_1(d_1/2), \sigma_2(d_2/2)$ isometrically embeds into an inner product space.

Analogously to above, if $L^2(\Omega, S^1) \cong L^2(\Omega, M)$, then $M$ isometrically embeds into $L^2(\Omega, S^1)$. Hence, for any pair of tangent vectors at a point in $M$, we can find a small flat triangle spanned by geodesics pointing into these directions. Hence, $M$ is flat. Therefore, $L^2(\Omega, S^1) \not\cong L^2(\Omega, M)$ for $\dim(M) \geq 2$ and $M$ with irreducible universal cover. This concludes the proof.
\end{proof} 

\subsection{Necessity of irreducibility assumption}\label{nes}
Let $X$ and $Y$ be metric spaces. The map $f \mapsto (f_1, f_2)$ is an isometry from $L^2(\Omega, X \times Y)$ to $L^2(\Omega, X) \times L^2(\Omega, Y)$, where $f_1$ and $f_2$ denote the projections of $f$ onto $X$ and $Y$, respectively. This, together with Theorem~\ref{MN}, demonstrates the necessity of some irreducibility condition for Theorem~\ref{L2factors1} to hold. 

The same applies to Theorem~\ref{MN} as illustrated by the following. 

\begin{lemma}\label{isomo}
Let $(\Omega, \mu)$ be an atomless standard probability space, and $X$ be a metric space. Then: $L^2(\Omega, X^n) \cong L^2(\Omega, \sqrt{n}X)$.
\end{lemma}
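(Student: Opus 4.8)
The plan is to reduce the statement to a self-similarity property of $L^2(\Omega,X)$ that encodes the atomlessness of $\Omega$. First I would use the canonical isometry recorded at the beginning of this section, $L^2(\Omega, X\times Y)\cong L^2(\Omega,X)\times L^2(\Omega,Y)$, and iterate it to identify
$$L^2(\Omega,X^n)\cong L^2(\Omega,X)^n,$$
where $X^n$ carries the $\ell^2$-product metric and the right-hand side the $\ell^2$-product metric on its factors. I would also record the elementary scaling identity $L^2(\Omega,\sqrt n X)=\sqrt n\,L^2(\Omega,X)$, which holds because rescaling the target metric by a constant $c$ rescales $d_{L^2}$ by the same $c$. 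With these two reductions, the claim becomes the self-similarity statement $L^2(\Omega,X)^n\cong\sqrt n\,L^2(\Omega,X)$.

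To prove this, I would exploit atomlessness. By Theorem~\ref{Rokhlin} we may assume $\Omega=([0,1],\lambda)$, and we partition it into $n$ sets $A_1,\dots,A_n$ of equal measure $\frac1n$. Since $\int_\Omega d_X^2(f,g)\,d\lambda=\sum_{i=1}^n\int_{A_i}d_X^2(f,g)\,d\lambda$, the restriction map $f\mapsto(f|_{A_1},\dots,f|_{A_n})$ is a canonical isometry $L^2(\Omega,X)\cong\prod_{i=1}^n L^2(A_i,X)$ onto the $\ell^2$-product, where each factor carries the restricted measure $\lambda|_{A_i}$. I would then identify each factor with a rescaled copy of the original space: normalizing the restricted measure yields the atomless standard probability space $(A_i,n\lambda|_{A_i})$, which by Theorem~\ref{Rokhlin} is isomorphic to $([0,1],\lambda)$, so Lemma~\ref{lem:cv} gives $L^2(A_i,n\lambda|_{A_i},X)\cong L^2(\Omega,X)$. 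Since passing from $n\lambda|_{A_i}$ to $\lambda|_{A_i}$ rescales distances by $\frac1{\sqrt n}$, this yields $L^2(A_i,\lambda|_{A_i},X)\cong\frac1{\sqrt n}\,L^2(\Omega,X)$ for every $i$. Combining the last two displays,
$$L^2(\Omega,X)\cong\prod_{i=1}^n\frac1{\sqrt n}\,L^2(\Omega,X)=\frac1{\sqrt n}\,L^2(\Omega,X)^n,$$
whence $L^2(\Omega,X)^n\cong\sqrt n\,L^2(\Omega,X)=L^2(\Omega,\sqrt n X)$, and together with the first paragraph this proves the lemma.

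The main obstacle is this middle step: everything else is formal bookkeeping with product identities and metric rescalings, but the self-similarity $L^2(\Omega,X)^n\cong\sqrt n\,L^2(\Omega,X)$ genuinely requires that $\Omega$ split into $n$ measure-isomorphic copies of itself, which is precisely where atomlessness is indispensable, as an atom cannot be subdivided into parts of smaller equal measure. Care is also needed to distinguish the abstract, non-canonical isomorphisms supplied by Theorem~\ref{Rokhlin} from the canonical product splittings given by additivity of the integral, so as to be sure that the composite map is a well-defined isometry.
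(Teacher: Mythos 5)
Your proof is correct and takes essentially the same route as the paper: both reduce via Theorem~\ref{Rokhlin} and Lemma~\ref{lem:cv} to $\Omega=([0,1],\lambda)$ and obtain the isometry by splitting $[0,1]$ into $n$ pieces of measure $\tfrac1n$ and rescaling, the paper simply writing the resulting map explicitly as $\gamma(f)(t)=f_i(nt-i+1)$ on $[\tfrac{i-1}{n},\tfrac{i}{n})$ and verifying it by the same change-of-variables computation that your canonical splittings encapsulate. Indeed, with the choice $A_i=[\tfrac{i-1}{n},\tfrac{i}{n})$, the composite of your component splitting, restriction splitting, and measure-renormalization isometries is exactly the paper's map $\gamma$.
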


\begin{proof} By Theorem~\ref{Rokhlin}, $\Omega \cong [0,1]$. Thus, by Lemma~\ref{lem:cv}, $L^2(\Omega,X^n)\cong L^2([0,1],X^n)$ and $L^2(\Omega,\sqrt{n}X)\cong L^2([0,1],\sqrt{n}X)$. 

    Define $\gamma: L^2([0,1],X^n) \to L^2([0,1],\sqrt{n}X)$ by $\gamma(f)(t) = f_i(nt-i+1)$ for $t\in [\frac{i-1}{n},\frac{i}{n})$, where $f = (f_1, ..., f_n)$. This map is evidently surjective. The map is also isometric. Indeed for $f,f^\prime\in L^2([0,1],X^n)$,
    
    \begin{align*}
        d_{L^2([0,1],X^n)}^2(f,f^\prime) &= \int_0^1 \sum_{i=1}^n d_X^2(f_i(t),f_i^\prime(t)) \,dt \\
        &= \sum_{i=1}^n \int_{\frac{i-1}{n}}^{\frac{i}{n}} nd_X^2 (f_i(nt-i+1), f_i^\prime(nt-i+1)) \, dt \\
        &= \int_0^1 (\sqrt{n}d_X)^2(\gamma(f)(t),\gamma(f^\prime)(t))\,dt \\&= d^2_{L^2([0,1],\sqrt{n}X)}(\gamma(f),\gamma(f^\prime)).
    \end{align*}
    
This completes the proof, establishing $L^2(\Omega, X^n) \cong L^2(\Omega, \sqrt{n}X)$. 
\end{proof}

Finally the following example directly demonstrates the necessity of the irreducibility assumption for Theorem~\ref{isometriesM} and thereby in particular, Theorem~\ref{semi-main}. The example should give another direct sense of why these theorems fail for reducible spaces. 

\begin{example}\label{R1}
	
Let $M$ satisfy the assumptions of Theorem~\ref{isometriesM}. We construct an isometry of $L^2([0,1], M \times M)$ violating the rigidity behaviour of Theorem~\ref{isometriesM}. 

We will be using notations from the proof of Theorem~\ref{semi-main} above. Further let $\gamma: L^2([0,1], M\times M) \to L^2([0,1], \sqrt{2}M)$ be the isometry from the proof of Lemma~\ref{isomo} (for $n = 2, X = M$).

Consider $\varphi \in \text{Aut}([0,1])$ fixing $[0,1/4] \cup [3/4,1]$ and swapping $[1/4,1/2]$ and $[1/2,3/4]$. Let $(x, y) \in M \times M$. Then $(\gamma^\varphi \circ \gamma)(f_{((x,y))}^{[0,1]}) = f_{(x, y)}^{(A, A^c)}$ for $A = [0,1/4] \cup [1/2,3/4]$, and so $(\gamma^{-1} \circ \gamma^\varphi \circ \gamma)(f_{((x,y))}^{[0,1]}) = f_{((x, x), (y, y))}^{([0,1/2], [1/2,1])}$.

Since $\text{Aut}([0,1])$ acts trivially on constant functions, if the isometries were rigid in the sense of Theorem~\ref{isometriesM}, there would exist $\rho \in L^2([0,1], \text{Isom}(M \times M))$ such that $\gamma^{-1} \circ \gamma^\varphi \circ \gamma = \gamma_\rho$. Therefore, there would exist $\omega\in [0,1/2]$ such that the isometry $\rho(\omega):M\times M \to M \times M$ sends $(x, y)$ to $(x, x)$ for all $(x,y)\in M\times M$.  This is a contradiction.
\end{example}

As noted in the introduction, without the irreducibility assumption, we can still provide weaker algebraic characterizations of the isometry group.
\begin{remark}\label{rem}
	For atomless $\Omega$ and non-isometric complete Riemannian manifolds $M,N$ with irreducible universal covers, we can prove that isometries of $L^2(\Omega,M) \times L^2(\Omega,N)$ preserve the product structure. The arguments are similar to the arguments used for establishing Theorem~\ref{isometriesM}. 

For a complete simply connected Riemannian manifold $M$, let $\R^{m_0} \times M_1^{m_1}\times M_n^{m_n}$ be its de Rham decomposition, where $M_1,...,M_n$ are pairwise non-isometric, simply connected and irreducible Riemannian manifolds.  Thus by Lemma~\ref{isomo} and the just mentioned splitting of isometries of $L^2(\Omega,M_i) \times L^2(\Omega,M_j)$, we obtain:
\[
\operatorname{Isom}(L^2(\Omega,M)) \cong \operatorname{Isom}(L^2(\Omega,\R)) \times \prod_{i=1}^n L^2(\Omega,\operatorname{Isom}(M_i)) \rtimes \operatorname{Aut}(\Omega).
\]

Unlike Theorem~\ref{semi-main}, this is merely an abstract group isomorphism and does not specify explicit embeddings of the right-hand-side subgroups into the isometry group of $L^2(\Omega,M)$.
\end{remark}

\section*{Acknowledgments}  I would like to thank Alexander Lytchak for insightful comments and suggestions on this project. I would also like to thank Nicola Cavallucci for reading an earlier version of this work and providing valuable feedback, and James Terrill for helpful comments. Finally, I extend my sincere appreciation to the anonymous referee for an exceptionally thorough review and helpful suggestions, which enhanced the clarity and presentation of this work. This research was partially supported by the Deutsche Forschungsgemeinschaft (DFG,
German Research Foundation) under project number 281869850.
\newpage
\bibliographystyle{alpha}
\bibliography{paper}

\end{document}